\newcommand{\field}[1]{\mathbb{#1}}
\newcommand{\V}{\field{V}}
\newcommand{\C}{\field{C}}
\newcommand{\CC}{\field{C}}
\newcommand{\N}{\field{N}}
\newcommand{\Q}{\field{Q}}
\newcommand{\Z}{\field{Z}}
\newcommand{\krn}{{\rm ker}\,}
\newcommand{\ins}{N_{n, \tau}}
\newcommand{\tr}{{\rm Tr}}
\DeclareMathOperator{\SL}{SL}
\DeclareMathOperator{\GL}{GL}
\newcommand{\matnn}{\mathrm{Mat}_{n \times n}(\C)}
\newcommand{\matv}{\mathrm{Mat}_{n \times 2}(\C)}
\newcommand{\matw}{\mathrm{Mat}_{2 \times n}(\C)}
\theoremstyle{plain}
\newtheorem{theorem}{Theorem}[section]
\newtheorem{proposition}[theorem]{Proposition}
\newtheorem{lemma}[theorem]{Lemma}
\newtheorem{corollary}[theorem]{Corollary}
\newtheorem{question}[theorem]{Question}
\theoremstyle{definition}
\newtheorem{definition}[theorem]{Definition}
\theoremstyle{remark}
\newtheorem{remark}[theorem]{Remark}
\newtheorem{remarks}[theorem]{Remarks}
\newtheorem{example}[theorem]{Example}
\theoremstyle{definition}
\title[A Criterion for ADP of smooth affine $\mathrm{SL}_2$-varieties]{A Criterion for the algebraic density property of smooth affine $\mathrm{SL}_2$-varieties}
\author[R.B. Andrist]{Rafael B. Andrist}
\address{Faculty of Mathematics and Physics \\
University of Ljubljana \\
Ljubljana, Slovenia}
\email{rafael-benedikt.andrist@fmf.uni-lj.si}
\author[J. Draisma]{Jan Draisma}
\author[G. Freudenburg]{Gene Freudenburg}
\address{Department of Mathematics \\
Western Michigan University}
\email{gene.freudenburg@wmich.edu}
\author[G. Huang]{Gaofeng Huang}
\author[F. Kutzschebauch]{Frank Kutzschebauch}
\address{Mathematisches Institut \\ University of Bern \\  
Bern, Switzerland}
\email{jan.draisma@unibe.ch, gaofeng.huang@unibe.ch, frank.kutzschebauch@unibe.ch}
\subjclass{13N15, 14J60, 14R20}
\keywords{locally nilpotent derivation, $\mathbb{G}_a$-action, $\mathrm{SL}_2$-action, reductive group action, affine variety, density property, Calogero--Moser space} 
\begin{document}

\begin{abstract}
    Let $B$ be an affine $k$-domain which admits a nontrivial fundamental
    pair $(D,U)$ of locally nilpotent derivations, i.e., if $E=[D,U]$
    then $(D,U,E)$ is an $\mathfrak{sl}_2$-triple.  We prove an algebraic
    criterion,
characterizing under which conditions the fundamental pair $(D,U)$
resp.~the triple $(D,U,E)$ is compatible. This compatibility is a technical property that enables us to find  many vector fields on the spectrum of
$B$ from the complete ones. This criterion enables us to prove the algebraic density property for the
following widely studied classes of $\mathrm{SL}_2$-varieties
arising  in physics: Classical Calogero--Moser spaces, Calogero--Moser
spaces with ``inner degrees of freedom'' and a smooth cyclic quiver
variety.
\end{abstract}

\maketitle

\setcounter{tocdepth}{1}
\tableofcontents


\section{Introduction}

The modern study of holomorphic automorphisms of $\CC^n$ started with the seminal papers of Rosay and Rudin \cite{MR0929658} and of Anders{\'e}n and Lempert \cite{MR1185588} and of Forstneri\v{c} and Rosay \cite{MR1213106} in the 1990s.
The \textbf{density property} for Stein manifolds was introduced by Varolin \cite{MR1829353} around 2000 in order to describe what it means for a group of holomorphic automorphisms to be ``very large'' and to generalize many geometric results of $\CC^n$ to a much larger class of complex manifolds.

\begin{definition}[\cite{MR1829353}]
Let $X$ be a Stein manifold. We say that $X$ has the \textbf{density property} if the Lie algebra generated by the complete holomorphic vector fields on $X$ is dense (in the topology of locally uniform convergence) in the Lie algebra of all holomorphic vector fields on $X$.
\end{definition}

In general, it is difficult to verify the density property directly, since one needs to prove a statement about all holomorphic vector fields. It is more convenient to work with a dense subalgebra, for example the algebra of all polynomial vector fields on a smooth affine variety.  For this reason, Varolin also introduced the following, more algebraic notion:

\begin{definition}
\label{def-ADP}
Let $X$ be a smooth affine variety over $\CC$. We say that $X$ has the \textbf{algebraic density property} if the Lie algebra generated by the complete polynomial vector fields on $X$ coincides with the Lie algebra of all polynomial vector fields on $X$.
\end{definition}

By a standard application of Cartan--Serre's Theorem A, the algebraic density property implies the density property.

The main reason for the introduction of the density property by
Varolin is that the Anders{\' e}n--Lempert theorem holds for such
manifolds. This is a Runge approximation theorem for holomorphic
injections by holomorphic automorphisms. With this approximation in
hand, many open problems of geometric nature in several complex
variables have been solved. Clearly, the automorphism group of a Stein
manifold with the density property is huge; in particular, it is infinite-dimensional and acts infinitely transitively. The rapidly developing area of research around the density property is called Anders{\' e}n--Lempert theory. For an overview of applications, examples and techniques, we refer to the recent survey of Forstneri\v{c} and Kutzschebauch \cite{MR4440754}.

A major breakthrough in the construction of examples of Stein
manifolds with the density property was the introduction of the notion of \textbf{compatible pairs} of complete vector fields by Kaliman and Kutzschebauch \cite{MR2385667}.
This enabled them to prove the density property for all complex linear algebraic groups of dimension at least $2$ except for the complex tori $(\CC^\ast)^n, n \ge 2$. Donzelli, Dvorsky and Kaliman \cite{MR2718937} extended the result  to homogeneous spaces $G/H$ where $G$ is a complex linear algebraic group and $H$ is a closed reductive subgroup. Finally, Kaliman and Kutzschebauch \cite{MR3623226} generalized it even further to affine homogeneous spaces of linear algebraic groups $G/H$  without requiring $H$ to be reductive. In all these cases, compatible pairs arising from $\mathrm{SL}_2(\CC)$-actions play a crucial role. 

However, not all smooth affine varieties with the algebraic density property actually admit a compatible pair of algebraic vector fields. 
Therefore, compatible pairs were generalized to compatible $n$-tuples by Andrist, Freudenburg, Huang, Kutzschebauch and Schott \cite{Andrist.Freudenburg.Huang.Kutzschebauch.Schott}. 
This allowed them to simplify many proofs of the density property and to prove the density property for new interesting examples, e.g.,
Gromov-Vaserstein fibers.

\bigskip 

Anders{\' e}n-Lempert theory has not only had success in complex analysis,
it has also proven to be very fruitful in the study of real differentiable
manifolds. As an example, a long-standing problem in 3-dimensional
topology asked whether the fundamental group of any homology 3-sphere
different from the 3-sphere $S^3$ admits an irreducible representation
into $\mathrm{SL}_2(\CC)$. The positive answer given by Zentner
\cite{MR3813594} relied on a real version of Anders{\' e}n-Lempert
theory. Apart from the fact that our methods are purely algebraic, such
applications are an important reason why in the present paper we work
over more general fields than the complex numbers.

Throughout, $k$ denotes a ground field of characteristic zero. An integral domain containing $k$ is a {\bf $k$-domain}.
For the $k$-domain $B$, ${\rm Der}_k(B)$ is the set of $k$-derivations of $B$, and for $D\in {\rm Der}_k(B)$, $\krn D$ denotes its kernel. 
The following generalizes the definition of a compatible $n$-tuple of vector fields from the situation of smooth complex affine varieties given in 
\cite{Andrist.Freudenburg.Huang.Kutzschebauch.Schott} to $k$-domains over a field $k$ of characteristic $0$. 
\begin{definition}
\label{defi-tupel}
Let $B$ be a $k$-domain. Given $n\in\N$, $n\ge 2$, an $n$-tuple 
\[
(\theta_1,\hdots ,\theta_n)\in {\rm Der}_k(B)^n
\]
is {\bf compatible}
if and only if it satisfies each of the following two conditions.
\begin{enumerate}
    \item The subalgebra $k[\krn \theta_1, \krn \theta_2, \dots, \krn \theta_n]$ contains a nonzero $B$-ideal.
    \item There is a graph $(G,\pi ,\epsilon )$ where
    \begin{itemize} 
    \item [(i)] $G$ is a rooted directed tree with orientation toward the root;
    \item [(ii)] $\pi \colon {\rm Vert}(G)\to \{ \theta_1,\hdots ,\theta_n\}$ is a bijection with $\pi ({\rm root}) = \theta_1$;
    \item [(iii)] $\epsilon \colon {\rm Edge}(G)\to B$ is a mapping with
    \[
    \epsilon (v,w)\in \left(\krn\pi (v)^2\setminus\krn\pi (v)\right)\cap \krn \pi (w) \neq \emptyset \, .
    \]
    \end{itemize}
\end{enumerate} 
\end{definition}
Working over $k=\C$, the cited paper studies the case where $B$ is the ring of polynomial functions on a smooth affine variety $X$, and where 
the $n$-tuple $(\theta_1,\hdots ,\theta_n)$ consists of $\C$-complete polynomial vector fields on $X$ which are typically induced by $\CC^+$-actions or $\CC^\ast$-actions.

As explained above, the presence of an $\mathrm{SL}_2$-symmetry was
extremely helpful for showing the density property with the method of
compatible pairs. In the present article we characterize exactly when such a symmetry leads to a compatible pair or a compatible triple. 

We study the case where $B$ is an affine $k$-domain which admits a nontrivial fundamental pair $(D,U)$ of locally nilpotent derivations, i.e., if $E=[D,U]$ then $(D,U,E)$ is an 
$\mathfrak{sl}_2$-triple; see {\it Section\,\ref{fund-pair}}. 
One of our main results is the following algebraic criterion, characterizing under which conditions the fundamental pair $(D, U)$ resp.\ the triple $(D, U, E)$ is compatible. 

\begin{theorem}\label{criterion} 
Let $B$ be a normal affine $k$-domain with a nontrivial fundamental pair $(D,U)$. Set $E=[D,U]$ and let
$A=\krn D$ with induced $\N$-grading $A=\bigoplus_{d\in\N}A_d$ by
$E$-eigenvalues.
\begin{itemize}
    \item [{\bf (a)}] $(E,D,U)$ is a compatible triple for $B$ if and only if $A_2\ne\{ 0\}$.
    \item [{\bf (b)}] $(D,U)$ is a compatible pair for $B$ if and only if $A_1\ne \{ 0\}$. 
    \end{itemize}
\end{theorem}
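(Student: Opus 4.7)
My plan is to verify the two conditions of Definition~\ref{defi-tupel} directly in the ``if'' direction, using $\mathfrak{sl}_2$-representation theory together with the slice theorem for locally nilpotent derivations, and then extract the ``only if'' direction by decomposing a labeling element of the graph into its $E$-weight components. The background I rely on throughout is that $B$ is a locally finite $\mathfrak{sl}_2$-module, hence decomposes into isotypic components whose multiplicity spaces are identified with the graded pieces $A_n$. Two classical facts will carry most of the work: semisimplicity of $E$ gives $\ker E^2 = \ker E$, and for $U$ locally nilpotent on the domain $B$, with $K := \operatorname{Frac}(B)$, we have $K^U = \operatorname{Frac}(A)$; moreover, whenever a slice $s \in K$ with $Us = 1$ exists, $K = \operatorname{Frac}(A)[s]$.

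For part (b), I pick $0 \neq a \in A_1$ and set $b = Ua$. Since the irreducible $\mathfrak{sl}_2$-submodule generated by $a$ is two-dimensional, $b \neq 0$ and $U^2 a = 0$, so $b \in \tilde{A} := \ker U$. The tree with root $D$, child $U$, and edge $(U, D)$ labeled by $\epsilon = a$ fulfills condition (2). For condition (1), $s = a/b \in K$ is a slice for $U$, so $K = \operatorname{Frac}(A)[s] \subseteq \operatorname{Frac}(k[A, \tilde{A}]) \subseteq K$ gives equality of fraction fields; combined with finite generation of $B$ over $k[A, \tilde{A}]$, this produces a nonzero $f \in k[A, \tilde{A}]$ with $f^N B \subseteq k[A, \tilde{A}]$ for some $N$, the required $B$-ideal. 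Conversely, any $\epsilon \in \ker D \cap \ker U^2 \setminus \ker U$ decomposes as $\sum_d \epsilon_d$ with $\epsilon_d \in A_d$; the relation $U^2 \epsilon_d = 0$ inside the irreducible submodule generated by $\epsilon_d$ forces $d \leq 1$, and since $A_0 = B^{\mathfrak{sl}_2}$ is annihilated by $U$, the assumption $U\epsilon \neq 0$ forces $\epsilon_1 \neq 0$.

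Part (a) runs in parallel. Given $0 \neq a \in A_2$, set $c = Ua$ and $b = U^2 a$; then $c$ has $E$-weight $0$, while $Dc = Ea + UDa = 2a \neq 0$, $D^2 c = 0$, $Uc = b \neq 0$, $U^2 c = 0$. The star tree with root $E$ and both edges labeled by $c$ verifies condition (2), and $s = c/b$ is a slice for $U$ in $K$ that lies in $\operatorname{Frac}(k[\ker E, A, \tilde{A}])$ because $c \in \ker E$ and $b \in \tilde{A}$; the same finite-generation argument as above gives condition (1). For the converse, $\ker E^2 = \ker E$ forces $E$ to appear only as a target of edges, so some edge has the form $(X, E)$ with $X \in \{D, U\}$ and label $\epsilon \in \ker X^2 \setminus \ker X \cap \ker E$. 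Decomposing this $\epsilon \in B_0$ into $V_n$-isotypic components (only even $n$ contribute to the weight-zero part), the conditions $X^2 \epsilon = 0$ and $X \epsilon \neq 0$ restrict $\epsilon$ to the $V_0 \oplus V_2$ isotypic summand with nonzero $V_2$-component, yielding either $A_2 \neq 0$ or the multiplicity-equivalent $\tilde{A}_{-2} \neq 0$.

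The main obstacle is condition (1): converting a fraction-field equality into a concrete $B$-ideal inside a subalgebra of $B$. My plan handles this through two classical ingredients, the slice theorem (which yields $K = \operatorname{Frac}(A)[s]$ from a single slice $s$) and the elementary fact that for an extension $R \subseteq S$ of finitely generated $k$-domains with common fraction field, some nonzero element of $R$ multiplies $S$ into $R$. The remainder of the argument is a direct computation within the $\mathfrak{sl}_2$-isotypic decomposition of $B$, with the two choices of slice $a/b$ and $c/b$ making visible the crucial asymmetry between the $A_1 \neq 0$ and $A_2 \neq 0$ cases.
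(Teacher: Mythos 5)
Your treatment of condition (2) of Definition~\ref{defi-tupel} is correct in both directions and essentially coincides with the paper's: the element $Uf$ for $0\neq f\in A_2$ labels both edges of the star $D\rightarrow E\leftarrow U$, a nonzero $f\in A_1$ labels the edge of $U\rightarrow D$, and the converses follow by decomposing an edge label into weight (or isotypic) components, exactly as in the paper. Your slice computations giving $\operatorname{frac}(B)=\operatorname{frac}\bigl(k[\ker D,\ker U]\bigr)$ when $A_1\neq\{0\}$, and $\operatorname{frac}(B)=\operatorname{frac}\bigl(k[\ker E,\ker D,\ker U]\bigr)$ when $A_2\neq\{0\}$, likewise match Lemma~\ref{useful2}.

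The gap is in condition (1). The ``elementary fact'' you invoke --- that for finitely generated $k$-domains $R\subseteq S$ with $\operatorname{frac}(R)=\operatorname{frac}(S)$ there is a nonzero $f\in R$ with $f^N S\subseteq R$ --- is false: take $R=k[x]\subseteq S=k[x,x^{-1}]$, where the conductor is zero. Birationality plus finite generation alone never yields a nonzero $B$-ideal inside the subring; one also needs $B$ to be \emph{integral} over $R=k[\ker D,\ker U]$ (then $B$ lies in the integral closure $\mathcal{L}_R$, which is a finite $R$-module by affineness, and Lemma~\ref{conductor} produces the nonzero conductor). This integrality is precisely the substantive content of the paper's Theorem~\ref{general}(a): it is reduced via an equivariant surjection to the case $B=k[\V_{d_1}\oplus\cdots\oplus\V_{d_r}]$, where it is proved by exhibiting $k[\V_d]$ as a free module over the subring generated by the quadratic invariants $T_{2i}$ and $\alpha(T_{2i})$ (Propositions~\ref{free-module1} and~\ref{free-module2}). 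Note also that your argument never uses the normality of $B$, which is a stated hypothesis of the theorem; it is needed in Corollary~\ref{ideals} to identify $B$ with the integral closure of $R$ (resp.\ $S$). Without supplying the integrality step, condition (1) of compatibility is not established.
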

The proof for this criterion is given in {\it Section\,\ref{general-case}}. 
Note that the tree for the compatible triple in part (a) is $D\rightarrow E\leftarrow U$, and for the compatible pair in part (b) is $U\rightarrow D$. 

In {\it Section\,\ref{applications}} this criterion is applied to the case of a smooth affine $\C$-variety $X$
which admits a nontrivial algebraic $\mathrm{SL}_2$-action, see Theorem \ref{criterionADP}. In this case, the existence of such an action is equivalent to the existence of a nontrivial fundamental pair $(D,U)$ for $\C [X]$ (the ring of regular functions on $X$),
and the vector fields on $X$ determined by 
$D,U,E\in {\rm Der}_{\C}(\C [X])$ are complete. 

We then apply Theorem \ref{criterionADP} to prove the algebraic density property for the following widely studied classes of $\mathrm{SL}_2$-varieties: Classical Calogero--Moser spaces, Calogero--Moser spaces with ``inner degrees of freedom'' and one smooth cyclic quiver variety. 

Calogero--Moser spaces originate from classical physics, where they describe the completed phase space of a finite fixed number of classical, indistinguishable particles with a certain potential. However, Calogero--Moser spaces allow for ``collisions'' of particles, i.e.\ they do not necessarily need to have different spatial coordinates. The density property for classical Calogero--Moser spaces has been studied before by Andrist \cite{MR4305975}.

\section{Preliminaries}

\subsection{Conductor ideal}
Let $R\subset S$ be integral domains. The {\bf conductor} of $S$ in $R$ is:
\[
\mathcal{C}_R(S)=\{ r\in R\, |\, rS\subset R\}
\]
This is an ideal of both $R$ and $S$, and is the largest $S$-ideal contained in $R$.

\begin{lemma}\label{conductor} {\cite{MR3700208}*{ Lemma 1.21}} Let $K$ be a field, $A$ a $K$-domain and $\mathcal{L}_A$ the integral closure of $A$ in ${\rm frac}(A)$. 
If $A$ is $K$-affine then $\mathcal{C}_A(\mathcal{L}_A)\ne (0)$.
\end{lemma}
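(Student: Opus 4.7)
The plan is to reduce the claim to the classical finiteness theorem for the integral closure of a $K$-affine domain, and then to clear denominators.

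First, I would invoke the following standard fact: if $A$ is an integral domain that is finitely generated as a $K$-algebra (with, say, $K$ of characteristic zero as throughout this paper, or more generally $K$ perfect), then $\mathcal{L}_A$ is a finitely generated $A$-module. One quick way to see this is via Noether normalization: choose a polynomial subring $K[x_1,\hdots,x_d]\subset A$ over which $A$ is integral, observe that ${\rm frac}(A)$ is a finite separable extension of $K(x_1,\hdots,x_d)$ in characteristic zero, and apply the classical finiteness of integral closure of a Noetherian normal domain in a finite separable extension of its field of fractions.

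Next, I fix a finite $A$-module generating set $\alpha_1,\hdots,\alpha_n\in\mathcal{L}_A$. Since each $\alpha_i\in {\rm frac}(A)$, I write $\alpha_i=p_i/q_i$ with $p_i,q_i\in A$ and $q_i\ne 0$. Setting $q:=q_1q_2\cdots q_n\in A$, I obtain $q\alpha_i=(q/q_i)p_i\in A$ for every $i$, and therefore $q\mathcal{L}_A\subset A$. Hence $q\in \mathcal{C}_A(\mathcal{L}_A)$, and $q\ne 0$ because $A$ is a domain and each $q_i$ is nonzero.

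The main (and indeed the only nontrivial) obstacle is the finiteness of $\mathcal{L}_A$ as an $A$-module; once that is in hand, the denominator-clearing step is elementary. The hypothesis that $A$ is $K$-affine enters precisely through this finiteness theorem, and it cannot be dropped in general: for arbitrary Noetherian integral domains, Nagata gave well-known examples where the integral closure fails to be a finite module over the base ring, and the conclusion of the lemma also fails.
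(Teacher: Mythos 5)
Your proposal is correct and coincides with the standard argument behind this result: the paper gives no proof of its own, citing it directly as Lemma~1.21 of \cite{MR3700208}, whose proof likewise rests on Noether's finiteness theorem (that $\mathcal{L}_A$ is a finite $A$-module when $A$ is $K$-affine) followed by clearing the denominators of a finite generating set. Both the finiteness step and the denominator-clearing step in your write-up are sound, so there is nothing to add.
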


\subsection{Locally nilpotent derivations} We describe some of the basic properties of locally nilpotent derivations used in subsequent sections. The reader is referred to \cite{MR3700208} for a more extended treatment of the subject.

Let $B$ be a $k$-domain. Given $D\in {\rm Der}_k(B)$, $D$ is {\bf
locally nilpotent} if, for each nonzero $f\in B$, $D^nf=0$ for $n\gg
0$. Define the degree function $\deg_D \colon B\to \N\cup\{ -\infty\}$
by $\deg_Df=\max\{ n\in\N\, |\, D^nf\ne 0\}$ for $f\in B\setminus\{
0\}$ and $\deg_D0=-\infty$. The set of all locally nilpotent derivations of $B$ is 
${\rm LND}(B)$.  

Given $D\in {\rm LND}(B)$, let $A=\krn D$. 
The {\bf plinth ideal} for $D$ is the $A$-ideal ${\rm pl}(D)=A\cap DB$. 
Any $g\in B$ with $\deg_D(g)=1$ is called a {\bf local slice} for $B$. If $Dg=1$ then $g$ is called a {\bf slice}. 

\begin{theorem} {\rm (Slice Theorem)} \label{thm: slice} If $D\in {\rm
LND}(B)$ and $Dg=1$ for some $g\in B$, then $B=A[g]\cong_kA^{[1]}$,
a polynomial ring over $A$ in one variable.
\end{theorem}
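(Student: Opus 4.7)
The plan is to show that the natural $k$-algebra homomorphism $\phi \colon A[T] \to B$, $T \mapsto g$, is a $k$-algebra isomorphism, where $A[T]$ is a polynomial ring over $A$ in one indeterminate. The key observation that drives both injectivity and surjectivity is the behaviour of $D$ on powers of the slice $g$: since $Dg=1$ and $D$ is a derivation, one has $D(g^n) = n g^{n-1}$, and consequently $D^n(g^n) = n!$, which is nonzero in $B$ because $k$ has characteristic zero.

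For injectivity, I would argue by considering a hypothetical nonzero relation $a_n g^n + a_{n-1} g^{n-1} + \cdots + a_0 = 0$ with $a_i \in A$ and $a_n \neq 0$. Applying $D^n$ annihilates the terms of lower $g$-degree (since for $i<n$ we have $\deg_D(a_i g^i)\le i < n$), while $D^n(a_n g^n) = a_n \cdot D^n(g^n) = n!\,a_n$. Hence $n!\,a_n = 0$ in $B$, forcing $a_n = 0$, a contradiction.

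For surjectivity, I would induct on $n = \deg_D(f)$. The base case $n=0$ is immediate since $f\in A = \krn D$. For the inductive step, set $a := \tfrac{1}{n!} D^n f$; then $a\in A$ because $D^{n+1}f=0$, and the element
\[
f - a\, g^n
\]
satisfies $D^n(f - a g^n) = D^n f - a \cdot n! = 0$, so its $D$-degree is strictly less than $n$. By the inductive hypothesis, $f - a g^n \in A[g]$, and therefore $f \in A[g]$.

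I do not anticipate a real obstacle here: the whole argument hinges on the single combinatorial identity $D^n(g^n)=n!$ combined with the characteristic-zero hypothesis, and the usual degree-lowering trick. The only point requiring care is confirming that $\deg_D$ is subadditive/behaves well on products involving $g$, so that $\deg_D(a_i g^i) \le i$ for $a_i\in A$, which follows directly from the Leibniz rule and $Da_i=0$.
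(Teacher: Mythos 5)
Your proof is correct. The paper itself does not prove the Slice Theorem---it is quoted as a standard fact, with the reader referred to Freudenburg's book for the theory of locally nilpotent derivations---so there is no in-paper argument to compare against. Your argument is essentially the classical one: the injectivity step (algebraic independence of $g$ over $A$) and the degree-lowering induction for surjectivity are exactly what one obtains by unwinding the usual proof via the Dixmier map $\pi_g=\exp(-gD)$, and both rest on the identity $D^n(g^n)=n!$ together with the characteristic-zero and domain hypotheses (the latter being what lets you cancel $n!$ in $n!\,a_n=0$). No gaps.
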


For any multiplicatively closed set $S\subset A \setminus\{ 0\}$, $D$ extends to a locally nilpotent derivation on the localization $S^{-1}B$ by $D(b/s)=Db/s$. Suppose that $g\in B$ is a local slice for $D$, and set $f=Dg\in A$. Extend $D$ to the localization $B_f$. Then $D(g/f)=1$ and the Slice Theorem implies:
\[
B_f=A_f[g/f]=A_f[g]\cong_kA_f^{[1]}
\]
where $A_f^{[1]}$ is the polynomial algebra in one variable over $A_f$. 

The Slice Theorem immediately implies the following. 
\begin{lemma}\label{useful} 
Let $B$ be a $k$-domain, $D$ a locally nilpotent derivation of $B$, $A = \krn (D)$ and $r$ a local slice of $D$. Then  ${\rm frac}(A[r])={\rm frac}(B)$.
\end{lemma}


\begin{lemma}\label{lin-independent} Let $B$ be a $k$-domain and $D\in {\rm LND}(B)$ nonzero. Given $f\in B$, $f\ne 0$, set $d=\deg_D(f)$. Then $f,Df,\hdots ,D^df$ are linearly independent over $k$.
\end{lemma}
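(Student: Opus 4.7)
The plan is to prove linear independence by the standard trick of applying a sufficiently high power of $D$ to kill all but one term in a hypothetical dependence relation.

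Suppose for contradiction that there exist $c_0, c_1, \ldots, c_d \in k$, not all zero, with
\[
\sum_{i=0}^{d} c_i D^i f = 0.
\]
Let $j = \min\{i : c_i \neq 0\}$, so the relation becomes $\sum_{i=j}^{d} c_i D^i f = 0$. I would then apply $D^{d-j}$ to both sides. For each index $i$ with $j < i \le d$, we have $i + (d-j) > d$, so by definition of $d = \deg_D(f)$ we get $D^{i+d-j}f = 0$. The only surviving term is the one with $i = j$, yielding
\[
c_j D^{d} f = 0.
\]
Since $B$ is a $k$-domain, $c_j \neq 0$, and $D^d f \neq 0$ (as $\deg_D f = d$), this is a contradiction, establishing the linear independence.

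The argument is entirely routine and I expect no genuine obstacle: it uses only the definition of $\deg_D$ and the fact that $k$ embeds in a domain so that nonzero scalars are not zero divisors. The key observation is simply the choice to apply $D^{d-j}$ rather than, say, $D^{d-j+1}$ (which would kill everything) or a lower power (which would leave multiple surviving terms).
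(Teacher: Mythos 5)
Your proof is correct and uses essentially the same idea as the paper: applying a suitable power of $D$ to isolate a single term of a hypothetical dependence relation and derive a contradiction from $D^d f\ne 0$. The paper phrases this as an iteration (apply $D^d$ to get $c_0=0$, then $D^{d-1}$ to get $c_1=0$, and so on), whereas you take the minimal nonzero coefficient and do it in one step; the two are interchangeable.
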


\begin{proof} Suppose that $\sum_{i=0}^dc_iD^if=0$ for $c_i\in k$.
Applying $D^d$ yields $c_0D^df=0$. Since $D^df\ne 0$ we see that $c_0=0$. Next, apply $D^{d-1}$ to get $c_1=0$. Continuing in this way, we see that $c_i=0$ for each $i$. 
\end{proof} 

\subsection{Fundamental pairs}\label{fund-pair}
Let $B$ be a $k$-domain. Assume that $D,U\in {\rm LND}(B)$ satisfy the relations:
\[
[D,[D,U]]=-2D \quad {\rm and}\quad [U,[D,U]]=2U
\]
Such a pair $(D,U)$ is called a {\bf fundamental pair} for $B$. 
Define the $k$-derivation $E=[D,U]$. For each $d\in\Z$ we have $E-dI\in {\rm End}_k(B)$. Define 
\[
B_d=\krn (E-dI)\,\, ,\,\, d\in\Z
\]
noting that $B_d$ is a $B_0$-module. 

Set $A=\krn D$ and $\Omega =\krn U$. The {\bf degree modules} for $D$ are $\mathcal{F}_n=\krn D^{n+1}$, $n\in\N\cup\{ -1\}$, and the {\bf image ideals} for $D$ are $I_n=A\cap D^nB$, $n\in\N$. 
The kernel $\mathcal{F}_n$ is an $A$-module, and $I_n$ is an ideal of $A$. An ideal $J\subset B$ is {\bf $(D,U)$-invariant} if $DJ\subset J$ and $UJ\subset J$. 

The main results of this section are given in the following three theorems. 
The first describes the structure of $B$ determined by $(D,U)$;
the second describes the structure of $A$ determined by $(D,U)$;
and the third describes the behavior of $(D,U)$-invariant ideals.
Note that, for these results, we do not assume that $B$ is affine. 

\begin{theorem}\label{main1} The following properties hold. 
\begin{itemize}
\item [{\bf (a)}] $\krn U^n\cap {\rm im}\,D^n=\{ 0\}$ for each $n\ge 0$. 
\item [{\bf (b)}]  As $A_0$-modules:
\[
B=\bigoplus_{i\ge 0}U^iD^i(\mathcal{F}_i) \quad {\rm and}\quad \mathcal{F}_n=\mathcal{F}_{n-1}\oplus U^nD^n(\mathcal{F}_n) \,\, ,\,\, n\ge 0
\]
\item [{\bf (c)}] $E$ is semisimple as a $k$-derivation of $B$, and $B=\bigoplus_{d\in\Z}B_d$ is a $\Z$-grading.
\end{itemize}
\end{theorem}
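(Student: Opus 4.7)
The plan is to extract everything from one commutator identity. By induction on $n$, using $[D,U]=E$ together with $[E,U]=-2U$ (a rewriting of the hypothesis $[U,[D,U]]=2U$), one obtains
\[
[D,U^n]=nU^{n-1}(E-n+1).
\]
Since the symmetric relation $[E,D]=2D$ forces $E(A)\subseteq A$ (for $a\in A$ one has $D(Ea)=EDa-2Da=0$), a further induction yields the master formula $D^nU^na=n!\,E(E-1)(E-2)\cdots(E-n+1)\,a$ for every $a\in A$. Taking $n=\deg_U(a)+1$ gives $E(E-1)\cdots(E-\deg_U a)\,a=0$, a polynomial in $E$ with distinct roots annihilating $a$; partial-fraction projectors decompose $a$ into $E$-eigenvectors with eigenvalues in $\{0,1,\ldots,\deg_U a\}$, giving $A=\bigoplus_{d\ge 0}A_d$ where $A_d=A\cap\krn(E-d)$. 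The analogous argument for the triple $(U,D,-E)$ gives $\Omega=\bigoplus_{d\le 0}\Omega_d$; a bootstrap then shows $\deg_U a=d$ for every $a\in A_d\setminus\{0\}$.

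For part (a) I induct on $n$. Given $v=D^nw$ with $U^nv=0$, the companion identity $UD^n=D^nU-nD^{n-1}(E+n-1)$ places $Uv\in\operatorname{im} D^{n-1}\cap\krn U^{n-1}$, which vanishes by induction; hence $v\in\krn U\cap\operatorname{im} D^n\subseteq\krn U\cap\operatorname{im} D$, reducing everything to the base case $n=1$. For this base case, I decompose $v\in\Omega$ in the $\Omega$-grading, observe that $\operatorname{im} D$ is $E$-invariant (from $ED=DE+2D$, so $E\cdot Dw=D(Ew+2w)$), and hence by polynomial projectors in $E$ each component $v_d$ again lies in $\Omega_d\cap\operatorname{im} D$. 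For $d<0$, writing $v_d=Dw'$ and applying $D^{|d|}$ produces $a=D^{|d|+1}w'\in A_{|d|}\setminus\{0\}$, and $U^{|d|+1}a=0$ together with $a\in\operatorname{im} D^{|d|+1}$ contradicts the master identity. For $d=0$ one uses $\Omega_0\subseteq A$ (a lowest-weight vector of weight $0$ is automatically a highest-weight vector) and argues directly that a nonzero $\mathfrak{sl}_2$-invariant in $B$ cannot lie in $\operatorname{im} D$.

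Parts (b) and (c) then follow. For (b), the direct-sum assertion $\mathcal{F}_{n-1}\cap U^nD^n\mathcal{F}_n=0$ is immediate from the master identity: if $x=U^nD^ny$ with $D^nx=0$, then $n!\,E(E-1)\cdots(E-n+1)(D^ny)=0$ forces $D^ny$ into $A^{<n}:=A_0\oplus\cdots\oplus A_{n-1}$, and $U^n$ annihilates $A^{<n}$. Spanning uses (a) to show $D^n(\mathcal{F}_n)\subseteq A^{\ge n}:=\bigoplus_{d\ge n}A_d$ (any $A^{<n}$-component of $D^nw$ would furnish a nonzero element of $\krn U^n\cap\operatorname{im} D^n$), after which one explicitly solves $D^nU^na=D^nw$ for the required $a\in A^{\ge n}$. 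For (c), iterating (b) gives $B=\bigoplus_{i\ge 0}U^iD^i\mathcal{F}_i$; each $D^i\mathcal{F}_i\subseteq A$ is $E$-graded, and $U^i$ shifts $E$-weight by $-2i$, so $B=\bigoplus_{d\in\Z}B_d$ is a $\Z$-grading (with $E$ a derivation). The main obstacle is the base case $n=1$ of (a)---especially the $d=0$ sub-case---which must be settled without circular recourse to the $E$-grading on all of $B$.
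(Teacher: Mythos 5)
Your overall architecture is sound and close to the paper's: the identity $D^nU^na=n!\,p_n(E)a$ (with $p_n(x)=x(x-1)\cdots(x-n+1)$), the resulting gradings on $A$ and $\Omega$, and the derivation of (b) and (c) from (a) all match the paper's sequence of lemmas. Your reduction of part (a) to the case $n=1$ via $[U,D^n]=-nD^{n-1}(E+(n-1)I)$ is correct and arguably cleaner than the paper's argument (which instead writes $U^{m-1}D^{m-1}$ as a polynomial in $DU$ and $E$). However, the base case $\krn U\cap \mathrm{im}\,D=\{0\}$ --- which you yourself identify as ``the main obstacle'' --- is not actually proved, and this is a genuine gap. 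In the sub-case $d<0$ you produce $a=D^{|d|}v_d\in A_{|d|}\setminus\{0\}$ with $a\in\mathrm{im}\,D^{|d|+1}$ and $U^{|d|+1}a=0$, and claim this ``contradicts the master identity''; but it does not: applying the master identity with $n=|d|+1$ gives $D^{|d|+1}U^{|d|+1}a=(|d|+1)!\,p_{|d|+1}(|d|)\,a$, and $p_{|d|+1}(|d|)=|d|(|d|-1)\cdots 0=0$, so both sides vanish and no contradiction arises. Deriving one from $a\in A_{|d|}\cap \mathrm{im}\,D^{|d|+1}$ would amount to the statement $I_{|d|+1}\subseteq\bigoplus_{e\ge|d|+1}A_e$, whose proof in the paper itself relies on part (a) --- so as written the step is circular. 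In the sub-case $d=0$ you only assert that ``one argues directly that a nonzero $\mathfrak{sl}_2$-invariant cannot lie in $\mathrm{im}\,D$,'' which is precisely the claim to be proved.

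The paper's mechanism for this base case (its Lemma on $\Omega\cap DB=\{0\}$) is the piece you are missing, and it handles all $d\le 0$ at once: the induction proving $D^nU^nf=n!\,p_n(E)f$ needs only $UDf=0$ at the base step, so it applies to \emph{any} $f$ with $Df=h\in\Omega$, not just to $f\in A$. Choosing $n$ with $U^nf=0$ then yields $p_n(E-2I)h=0$, a polynomial in $E$ with roots $\{2,\dots,n+1\}$ annihilating $h$; on the other hand $h\in\Omega$ is annihilated by $q_{e+1}(E)$, whose roots are $\le 0$. Since the two polynomials are coprime over $\Q$, $h=0$. If you graft this argument (or apply it componentwise to your homogeneous pieces $v_d$, noting $(E-dI)v_d=0$ with $d\le 0$ while the roots of $p_n(x-2)$ are all $\ge 2$) into your base case, the rest of your proof goes through.
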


\begin{theorem}\label{main2} 
Given $d\in\N$ let $A_d=A\cap B_d$. The following properties hold. 
\begin{itemize}
\item [{\bf (a)}] $A=\bigoplus_{d\ge 0}A_d$ is an $\N$-grading.
\item [{\bf (b)}] Given nonzero $f\in A_d$, $d=\deg_Uf$.
\item [{\bf (c)}] Given $n\in\N$, $I_n=\bigoplus_{d\ge n}A_d$. 
\end{itemize}
\end{theorem}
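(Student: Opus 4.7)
The plan is to prove all three parts by leveraging a single $\mathfrak{sl}_2$-type commutator identity, namely
\[
[D,U^m] \;=\; m\,U^{m-1}\bigl(E-(m-1)I\bigr) \qquad (m\ge 1),
\]
which I would establish by a straightforward induction on $m$ starting from $[D,U]=E$ and using $[E,U]=-2U$. Applied to a vector $a\in A_d$, this identity collapses to the clean recursion $DU^m a = m(d-m+1)\,U^{m-1}a$, and essentially everything else is driven by iterating it.

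For part (a), I would first observe that $A$ is $E$-stable: $[D,E]=-2D$ gives $DE=ED-2D$, which annihilates $\krn D$. Hence the $\Z$-decomposition $B=\bigoplus_{d\in\Z}B_d$ of Theorem~\ref{main1}(c) restricts to $A=\bigoplus_{d\in\Z}A_d$. To rule out negative weights, pick nonzero $a\in A_d$ and let $n:=\deg_U a$, which is finite by local nilpotence of $U$. The recursion with $m=n+1$ yields $0=DU^{n+1}a=(n+1)(d-n)U^n a$; since $U^n a\ne 0$, we must have $d=n\ge 0$. The same computation simultaneously proves part (b), since the equality $d=\deg_U a$ is precisely its content.

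For part (c), the inclusion $\bigoplus_{d\ge n}A_d\subseteq I_n$ would follow from iterating the recursion: for $a\in A_d$ with $d\ge n$ one obtains $D^n U^n a = n!\,\frac{d!}{(d-n)!}\,a$, a nonzero scalar multiple of $a$, so $a\in D^n B$ and hence $a\in A\cap D^n B=I_n$. For the reverse inclusion I would take $a\in I_n$, write $a=D^n b$ with $b\in\mathcal{F}_n$ (which we may assume since $Da=0$ forces $D^{n+1}b=0$), and decompose both sides into $E$-homogeneous pieces, concluding that each weight component $a_d$ of $a$ itself lies in $I_n$. The key step is to rule out $a_d\ne 0$ for $d<n$: by part (b) we have $\deg_U a_d=d<n$, hence $U^n a_d=0$; combined with $a_d\in {\rm im}\,D^n$, Theorem~\ref{main1}(a) forces $a_d=0$.

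The main technical hurdle is really just pinning down the commutator identity $[D,U^m]=mU^{m-1}(E-(m-1)I)$ with correct signs, since a slip in the inductive step would propagate through all three parts. Once that single formula is in hand, (a) and (b) are one-line deductions, and (c) requires only the additional invocation of Theorem~\ref{main1}(a) to kill the low-weight contributions to $I_n$.
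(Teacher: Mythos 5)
Your proposal is correct, and for parts (b) and (c) it is essentially the paper's own argument: the identity $[D,U^m]=mU^{m-1}(E-(m-1)I)$ is equivalent to the paper's identity \eqref{identity}, the computation $0=DU^{n+1}a=(n+1)(d-n)U^na$ is exactly how the paper proves (b), the scalar $D^nU^na=n!\tfrac{d!}{(d-n)!}a$ is the paper's equation \eqref{identity2} specialized to $A_d$, and the reverse inclusion in (c) is killed by $\krn U^n\cap\operatorname{im}D^n=\{0\}$ (Theorem~\ref{main1}(a)) in both treatments. The genuine divergence is in part (a): you deduce $A=\bigoplus_d A_d$ by restricting the $\Z$-grading of Theorem~\ref{main1}(c) to the $E$-stable subspace $A$, whereas the paper deliberately avoids this. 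In the paper's actual development (Appendix~\ref{proof-fund-pair}) the semisimplicity of $E$ on $B$ is \emph{not} yet available when Theorem~\ref{main2} is proved: Lemma~\ref{A-grading} instead uses $p_{d+1}(E)f=0$ for $f\in A$ with $\deg_Uf=d$ (Lemma~\ref{min-poly}) together with Lemma~\ref{sum-decomp} and a Vandermonde argument to decompose elements of $A$ directly, and only afterwards is Theorem~\ref{main1}(c) established (Lemma~\ref{generate}), using Theorem~\ref{main2}(b) along the way via Lemma~\ref{module-sum}. Your route is therefore not circular --- your proof of (b) is independent of Theorem~\ref{main1}(c), and that is the only piece of Theorem~\ref{main2} the paper needs to reach Theorem~\ref{main1}(c) --- but it does impose a reordering: one must prove (b) first, then Theorem~\ref{main1}(c), and only then your versions of (a) and (c). What your approach buys is brevity, since you never need the polynomial annihilators $p_n(E)$ acting on $A$ or Lemma~\ref{sum-decomp}; what the paper's approach buys is a self-contained proof of the $A$-grading that can sit before, and feed into, the proof that $E$ is semisimple on all of $B$.
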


\begin{theorem}\label{main3}
Let $\mathfrak{p}\subset B$ be a graded (see Theorem \ref{main1}(c)) and $(D,U)$-invariant ideal. 
\begin{itemize}
\item [{\bf (a)}] $D^{-1}(\mathfrak{p})=A+\mathfrak{p}$
\item [{\bf (b)}] Assume that $\mathfrak{p}$ is a prime ideal. 
Let $\pi \colon B\to B/\mathfrak{p}$ be the standard surjection and let $(\pi (D),\pi (U))$ be the induced fundamental pair on $B/\mathfrak{p}$.
Then $\krn (\pi (D))=\pi (\krn D)$ and $\krn (\pi (U))=\pi (\krn U)$.
\end{itemize}
\end{theorem}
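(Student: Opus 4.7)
The plan is to deduce part (a) from the fact that $B$ is naturally a completely reducible $\mathfrak{sl}_2$-module under the triple $(D,U,E)$, and then to derive part (b) as a formal consequence of (a).

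For part (a), the inclusion $A + \mathfrak{p} \subseteq D^{-1}(\mathfrak{p})$ is immediate, since $D(A)=0$ and the $(D,U)$-invariance of $\mathfrak{p}$ gives $D(\mathfrak{p}) \subseteq \mathfrak{p}$. For the reverse inclusion, I would first observe that $B$ carries a locally finite, rational $\mathfrak{sl}_2$-action via $(D,U,E)$: the derivations $D$ and $U$ are locally nilpotent, and by Theorem~\ref{main1}(c), $E$ is semisimple with integer eigenvalues. By Weyl's complete reducibility theorem, applied to finite-dimensional $\mathfrak{sl}_2$-submodules and assembled globally, every $\mathfrak{sl}_2$-stable $k$-subspace of $B$ admits an $\mathfrak{sl}_2$-stable $k$-linear complement. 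The ideal $\mathfrak{p}$, being $(D,U)$-invariant, is automatically $E=[D,U]$-invariant and hence an $\mathfrak{sl}_2$-submodule; let $\mathfrak{q}$ be an $\mathfrak{sl}_2$-invariant complement, so $B = \mathfrak{p} \oplus \mathfrak{q}$ as $k$-spaces. Given $f$ with $Df \in \mathfrak{p}$, decompose $f = p + q$ accordingly. Then $Dq = Df - Dp$ lies in $\mathfrak{p}$ (both summands do) while $Dq \in \mathfrak{q}$ by invariance; hence $Dq \in \mathfrak{p} \cap \mathfrak{q} = \{0\}$, forcing $q \in \krn D = A$ and $f = p + q \in \mathfrak{p} + A$.

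For part (b), primality of $\mathfrak{p}$ makes $B/\mathfrak{p}$ a $k$-domain on which $\bar D = \pi(D)$ and $\bar U = \pi(U)$ are well-defined locally nilpotent derivations forming a fundamental pair. The containment $\pi(\krn D) \subseteq \krn \bar D$ is clear. Conversely, if $\bar f \in \krn \bar D$ and $f$ is any lift, then $Df \in \mathfrak{p}$; part~(a) yields $f = a + p$ with $a \in A$ and $p \in \mathfrak{p}$, so $\bar f = \pi(a) \in \pi(A)$. The corresponding statement for $\bar U$ follows by applying the same reasoning to the pair $(U,D)$, which is itself a fundamental pair (the defining relations are symmetric under the swap, with $[U,D]=-E$) whose invariant ideals coincide with those of $(D,U)$; part~(a) applied to $(U,D)$ yields $U^{-1}(\mathfrak{p}) = \Omega + \mathfrak{p}$, and the desired equality $\krn \bar U = \pi(\Omega)$ follows.

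The main obstacle is justifying complete reducibility of $B$ as an $\mathfrak{sl}_2$-module in this a priori infinite-dimensional setting. This reduces to establishing local finiteness, which is supplied by the decomposition in Theorem~\ref{main1}(b): $B$ breaks up as a sum of finite-dimensional $(D,U,E)$-invariant subspaces, to each of which Weyl's theorem applies directly; assembling local invariant complements into a global $\mathfrak{sl}_2$-invariant complement for $\mathfrak{p}$ is then formal, either via an isotypic decomposition or via Zorn's lemma.
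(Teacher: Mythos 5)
Your proof is correct, but it takes a genuinely different route from the paper. The paper proves part (a) by induction on the degree modules $\mathcal{F}_n=\krn D^{n+1}$: for homogeneous $f$ with $Df\in\mathfrak{p}\cap\mathcal{F}_{n+1}$ it uses the decomposition $\mathcal{F}_{n+2}=\mathcal{F}_{n+1}\oplus U^{n+2}D^{n+2}(\mathcal{F}_{n+2})$ from {\it Lemma\,\ref{module-sum}} to write $cf=g+h$ with $h=U^{n+2}D^{n+2}(f)\in\mathfrak{p}$ and $g\in\mathcal{F}_{n+1}$ handled by induction; this is entirely self-contained, relying only on the lemmas already established in the appendix, and is what forces the ``graded'' hypothesis to appear explicitly. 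You instead invoke Weyl's complete reducibility to produce an $\mathfrak{sl}_2$-invariant complement $\mathfrak{q}$ of $\mathfrak{p}$ and read off $Dq\in\mathfrak{p}\cap\mathfrak{q}=\{0\}$, which is shorter and more conceptual once local finiteness of the $\mathfrak{sl}_2$-action is in hand. That is the one point where your write-up is imprecise: {\it Theorem\,\ref{main1}(b)} does not literally decompose $B$ into finite-dimensional invariant subspaces --- the summands $U^iD^i(\mathcal{F}_i)$ are $A_0$-modules, typically infinite-dimensional over $k$. Local finiteness does hold, but you get it by combining {\it Theorem\,\ref{main1}(b)} with {\it Theorem\,\ref{main2}}: since $D^i(\mathcal{F}_i)\subseteq I_i=\bigoplus_{d\ge i}A_d$, every element of $B$ lies in a finite sum of the finite-dimensional invariant subspaces $\widehat{f}$, $f\in A_d$ homogeneous, to which Weyl's theorem applies; assembling a global invariant complement is then standard. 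With that repair your argument is complete (and note that gradedness of $\mathfrak{p}$ is automatic from $E$-invariance in your setup), and your treatment of part (b), including the observation that $(U,D)$ is again a fundamental pair, matches the paper's.
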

We also need the following. 

\begin{lemma}\label{useful2} 
Define subrings $R=k[\krn D,\krn U]$ and $S=R[\krn E]$.
\begin{itemize}
\item [{\bf (a)}] If $A_1\ne\{ 0\}$ 
then ${\rm frac}(R)={\rm frac}(B)$.
\item [{\bf (b)}] If $A_2\ne\{ 0\}$ 
then ${\rm frac}(S)={\rm frac}(B)$.
\item [{\bf (c)}] $A_1\ne\{ 0\}$ if and only if there exists $g\in B$ with $\deg_U(g)=0$ and $\deg_D(g)=1$. 
\item [{\bf (d)}] $A_2\ne\{ 0\}$ if and only if there exists $g\in B$ with $\deg_D(g)=\deg_U(g)=1$.  
\end{itemize} 
\end{lemma}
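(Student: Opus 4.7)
The plan is to establish (c) and (d) as the core technical content; parts (a) and (b) then follow from Lemma \ref{useful} applied to the local slices produced in the $\Rightarrow$ directions.

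For the $\Rightarrow$ direction of both (c) and (d), I take a nonzero $h\in A_d$ (with $d=1$ or $d=2$) and set $g:=Uh$. Theorem \ref{main2}(b) gives $\deg_Uh=d$, so $U^{d+1}h=0$ and $\deg_Ug=d-1$. Since $Dh=0$ and $[D,U]=E$, I compute $Dg=D(Uh)=UDh+Eh=dh\ne 0$ and $D^2g=dDh=0$, so $\deg_Dg=1$. In case (c), $g\in\krn U$; in case (d), $Eg=EUh=(UE-2U)h=(d-2)Uh=0$ gives $g\in\krn E$. Either way, $g$ is a local slice for $D$ lying in the kernel required by Lemma \ref{useful}.

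For the $\Leftarrow$ directions, I decompose $g=\sum_{d\in\Z}g_d$ into $E$-homogeneous components using Theorem \ref{main1}(c). Since $D$ and $U$ shift the $E$-grading by $\pm 2$, the hypotheses $D^2g=U^2g=0$ pass to each summand, giving $\deg_Dg_d\le 1$ and $\deg_Ug_d\le 1$. Applying Theorem \ref{main2}(b) to $(D,U)$ and to the companion pair $(U,D)$ yields $\deg_U f=d$ for $0\ne f\in A\cap B_d$ and $\deg_D f=-d$ for $0\ne f\in\Omega\cap B_d$; in particular $A$ is supported in non-negative and $\Omega$ in non-positive $E$-weights. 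A case analysis on whether $Dg_d$ and $Ug_d$ vanish then confines the support of $g$: for (c), since $g\in\Omega$ each $g_d\in\Omega\cap B_d$, and the bounds force $d\in\{-1,0\}$; for (d), one obtains $g=g_{-1}+g_0+g_1$ with $g_{-1}\in\Omega\cap B_{-1}$, $g_1\in A_1$, and $g_0\in B_0$ either in $A_0$ or with $\deg_Dg_0=\deg_Ug_0=1$.

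The main obstacle is the subcase in (d) where both $Dg_d$ and $Ug_d$ are nonzero: to rule out $d\in\{\pm 1,\pm 2\}$, I use the identities $D^2Ug_d=(2d+2)Dg_d$ and $U^2Dg_d=-2(d-1)Ug_d$ together with the degree formulas of Theorem \ref{main2}(b) applied to $Dg_d\in A_{d+2}$ and $Ug_d\in\Omega\cap B_{d-2}$ to derive contradictions for each $d\ne 0$. Then (c) follows at once since $Dg=Dg_{-1}\in A_1\setminus\{0\}$; for (d), $Dg\in A_1\oplus A_2$ is nonzero, and if $Dg_0\ne 0$ then $A_2\ne\{0\}$ directly, while if $Dg_0=0$ then $Dg_{-1}\in A_1\setminus\{0\}$ and its square lies in $A_2\setminus\{0\}$ since $B$ is a domain and $A_1\cdot A_1\subseteq A_2$. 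Finally, (a) follows from Lemma \ref{useful} with $\delta:=U$, yielding ${\rm frac}(R)={\rm frac}(B)$; and (b) from Lemma \ref{useful} with $\delta:=E$, yielding ${\rm frac}(k[\krn D,\krn E])={\rm frac}(B)\subseteq{\rm frac}(S)$.
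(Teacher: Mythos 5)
Your proof is correct and follows the same overall architecture as the paper's: the forward directions of (c) and (d) produce the local slice $g=Uh$ lying in $\krn U$ resp.\ $\krn E$, parts (a) and (b) then come from Lemma~\ref{useful}, and the converse directions decompose $g$ into $E$-homogeneous components and run a case analysis on the vanishing of $Dg_d$ and $Ug_d$, using Theorem~\ref{main2}(b) for $(D,U)$ and for the companion pair $(U,D)$. The one genuine divergence is the mixed subcase of (d) where $Dg_d\ne 0$ and $Ug_d\ne 0$: the paper confines the weight to $-1\le d\le 1$ via Theorem~\ref{main2}(c) (i.e.\ $A\cap DB=\bigoplus_{d\ge 1}A_d$ and its mirror for $\Omega$) and then treats $d=\pm 1$ by observing $A_{-1}=\{0\}$ forces $Dg_{-1}\ne 0$, whereas you use the identities $D^2Ug_d=(2d+2)Dg_d$ and $U^2Dg_d=-2(d-1)Ug_d$ against the degree formulas to show the mixed case can only occur at $d=0$. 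Both arguments are valid (I checked your identities and the resulting contradictions for $d\in\{\pm1,\pm2\}$); yours is slightly sharper in that it eliminates the mixed case entirely outside weight zero, at the cost of verifying the two commutator identities, while the paper's leans on the already-established image-ideal grading. Either way the conclusion $Dg\in A_1\oplus A_2\setminus\{0\}$, together with $A_1\cdot A_1\subseteq A_2$ and $B$ being a domain, finishes part (d).
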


Proofs for these four results are given in {\it Appendix\,\ref{proof-fund-pair}}.

Fundamental pairs $(D,U)$ were studied in \cite{Freudenburg2022}, where the definition included the hypothesis that $[D,U]$ is semi-simple, 
and asked whether this hypothesis could be removed. {\it Theorem\,\ref{main1}} shows that, indeed, this hypothesis is unnecessary, i.e., if $(D,U)$ satisfies the given Lie relations then $[D,U]$ is semi-simple. 
In \cite{MR3049288}*{Proposition 2.1}, Arzhantsev and Liendo give the following criterion for the existence of an $\mathrm{SL}_2$-action on an affine variety over an algebraically closed field of characteristic zero, describing it as ``well-known''.
\begin{quote}{\it
A nontrivial $\mathrm{SL}_2$-action on an affine variety $X = { Spec}(A)$ is
equivalent to a (not necessarily effective) $\Z$--grading on $A$ with infinitesimal generator
$\delta$ and a couple of homogeneous locally nilpotent derivations $(\delta_+,\delta_-)$ of $A$ of degrees $\deg_{\Z}\delta_{\pm} = \pm 2$
and satisfying $[\delta_+,\delta_-]=\delta$.}
\end{quote}
Consequently, when $k$ is algebraically closed, $\mathrm{SL}_2$-actions on $X$ (or $A$) are equivalent to fundamental pairs $(D,U)$ for $A$, i.e., $D$ and $U$ are locally nilpotent and $(D,U,[D,U])$ is an $\mathfrak{sl}_2$-triple. 

\section{Basic Fundamental Pairs on Polynomial Rings}
Given the integer $d\ge 1$ let $B=k[x_0,\hdots ,x_d]=k^{[d+1]}$. 
The {\bf basic} fundamental pair $(D,U)$ on $B$ is defined by:
\[
Dx_i=x_{i-1} \,\, (1\le i\le d) \quad \text{and}\quad Dx_0=0
\]
and:
\[
Ux_i=(i+1)(d-i)x_{i+1} \,\, (0\le i\le d-1) \quad \text{and}\quad Ux_d=0
\]
This pair defines the $\mathrm{SL}_2$-action on $B$ induced by the unique (up to the action of $\GL_{d+1}(k)$) irreducible representation of $\mathrm{SL}_2$ on $\V_d:=kx_0\oplus\cdots\oplus kx_d=k^{d+1}$;
see \cite{Freudenburg2022}. Set $A=\krn D$ and $\Omega=\krn U$, and $E=[D,U]$. 

Write $d=2m+\delta$ for $\delta\in\{ 0,1\}$ depending on whether $d$
is even or odd. It is well-known that the vector space of quadratic
elements of $A=\krn(D)$ has basis 
$\{ T_0,T_2,T_4,\hdots ,T_{2m}\}$ where:
\[
T_{2i}=\sum_{0\le j\le 2i}(-1)^jx_jx_{2i-j} \quad (0\le i\le m)
\]
In particular, the coefficient of $x_i^2$ in $T_{2i}$ is $(-1)^i$, $0\le i\le m$. 
Define the involution $\alpha\in \GL_{d+1}(k)$ by:
\[ 
\textstyle\alpha (x_i)=\frac{(d-i)!}{i!}\, x_{d-i} \quad (0\le i\le d)
\]
Then $\alpha (D,U)\alpha^{-1}=(U,D)$. Consequently, 
the vector space of quadratic elements of $\Omega$ has basis $\{ \alpha (T_0),\alpha (T_2),\hdots ,\alpha (T_{2m})\}$.
In particular, the coefficient of $x_{d-i}^2$ in $\alpha (T_{2i})$ is  $(-1)^i(\frac{(d-i)!}{i!})^2\ne 0$, $d-m\le d-i\le d$. 
We make the following definitions. 
\begin{enumerate}
\item $R=k[t_0,\hdots ,t_d]\cong_kk^{[d+1]}$. 
\item $K=k[x_0^2,\hdots ,x_d^2]\cong_kR$. 
\item $\beta \colon R\to B, t_j \mapsto x_j^2$ is the inclusion with image $K$.
\item $(B,\beta )$ denotes $B$ with the $R$-module structure defined by $\beta$. 
\item Given $0\le i\le m$ let $y_i=T_{2i}$ and $y_{d-i}=\alpha (T_{2i})$, and set 
    \[ S=k[y_0,\hdots ,y_d] \cong_k R \]
\item $\gamma \colon R \to B, t_j \mapsto y_j$ is the inclusion with image $S$.
\item $(B,\gamma )$ denotes $B$ with the $R$-module structure defined by $\gamma$. 
\item $X\subset B$ is the (finite) set of monic square-free monomials in $x_0,\hdots ,x_d$. 
\end{enumerate}
\begin{proposition}\label{free-module1} $(B,\beta )$ is a free $R$-module with basis $X$. 
\end{proposition}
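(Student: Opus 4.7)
The plan is to reduce the statement, via the isomorphism $\beta\colon R\xrightarrow{\sim} K=k[x_0^2,\dots,x_d^2]$, to showing that $B$ is a free $K$--module with basis $X$. For this I will exploit the $\Z/2$--grading on the exponent vectors of monomials in $B$.

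First I would establish spanning. Any monomial $x_0^{a_0}\cdots x_d^{a_d}$ in $B$ can be split by writing each exponent as $a_i=2q_i+r_i$ with $r_i\in\{0,1\}$; then
\[
x_0^{a_0}\cdots x_d^{a_d}\;=\;(x_0^2)^{q_0}\cdots(x_d^2)^{q_d}\,\cdot\,x_0^{r_0}\cdots x_d^{r_d},
\]
which exhibits the monomial as an element of $K\cdot m$ for the square-free monomial $m=x_0^{r_0}\cdots x_d^{r_d}\in X$. Since the monomials in $x_0,\dots,x_d$ span $B$ over $k$, it follows that $X$ generates $B$ as a $K$--module.

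Next I would verify linear independence of $X$ over $K$. For each $m=x_0^{\varepsilon_0}\cdots x_d^{\varepsilon_d}\in X$, every element of $K\cdot m$ is a $k$--linear combination of monomials $x_0^{a_0}\cdots x_d^{a_d}$ whose exponent vector has parity pattern $(\varepsilon_0,\dots,\varepsilon_d)$. Distinct elements of $X$ correspond to distinct parity patterns, so the $K$--submodules $K\cdot m$, $m\in X$, live in complementary $k$--subspaces of $B$ spanned by disjoint sets of monomials. Hence if $\sum_{m\in X}p_m\,m=0$ with $p_m\in K$, then each $p_m\,m=0$, and since $B$ is a domain, each $p_m=0$.

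Finally, since $\beta$ is the inclusion of $R$ onto $K$ (the $x_i^2$ are algebraically independent over $k$ because the $x_i$ are), the $R$--module structure on $(B,\beta)$ coincides with the $K$--module structure on $B$ via $\beta$, so the freeness with basis $X$ transfers verbatim. No step presents a real obstacle; the only thing to get right is the bookkeeping that parity patterns distinguish the $K$--submodules $K\cdot m$, which is what ensures linear independence rather than merely spanning.
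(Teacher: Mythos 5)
Your proof is correct, and for the key step (linear independence of $X$ over $K$) it takes a genuinely different and more elementary route than the paper. The paper introduces the group $G=\Z_2^{d+1}$ acting by sign changes, identifies $K=B^G$, and then proves independence by induction on $d$: it assumes a relation $r_1m_1+\cdots+r_nm_n=0$ with the $r_i$ having no common factor, reduces modulo $x_d^2$, splits the relation according to whether $x_d$ divides $m_i$, and derives a contradiction from the inductive hypothesis. You instead observe that $B$ decomposes as a $k$-vector space according to the parity pattern of exponent vectors, that $K\cdot m$ is exactly the span of the monomials whose parity pattern matches that of $m\in X$, and that distinct $m$ give disjoint sets of monomials --- so a relation $\sum_m p_m m=0$ splits into $p_m m=0$ for each $m$, whence $p_m=0$ since $B$ is a domain. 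This is really the character (isotypic) decomposition of $B$ under the paper's group $G$, made explicit; it replaces the induction and the common-factor bookkeeping with a one-line direct-sum observation, and it simultaneously yields the spanning statement. The one small point you should make explicit is that $x_0^2,\dots,x_d^2$ are algebraically independent (so that $\beta$ is injective and $R\cong K$), which you do note parenthetically and which follows at once from the fact that distinct monomials in the $x_i^2$ are distinct monomials in the $x_i$. The paper's extra machinery ($G$-invariants, integrality of $B$ over $B^G$) is not needed for this proposition itself but is quoted later, which partly explains why the authors set it up; your argument proves exactly the stated proposition with less overhead.
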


\begin{proof} Let $G=\Z_2^{d+1}$ with basis $e_i=(\delta_{i,0},\hdots ,\delta_{i,d})$, $0\le i\le d $, where $\delta_{i,j}$ is the Kronecker delta.
Define an action of $G$ on $B$ by $e_i\cdot (x_j)=(-1)^{\delta_{i,j}}x_j$. Then
$B^G=k[x_0^2,\hdots ,x_d^2]\cong k^{[d+1]}$. Since $G$ is a finite group, $B$ is an integral extension of $B^G$; see \cite{MR3700208}*{\S 6, Appendix 1}. 

It is clear that $B$ is spanned by $X$ over $B^G$. 
Suppose that $r_1m_1+\cdots +r_nm_n=0$ for some $n\in\Z$, $n\ge 2$, $r_i\in B^G$ not all zero, and distinct $m_i\in X$. We may assume that $r_1,\hdots ,r_n$ have no common factor in $B^G$. 
Let $\pi \colon B\to B/(x_d^2)$ be the natural surjection. We have:
\[
B/(x_d^2)=\tilde{B}\oplus \tilde{B}x_d \quad \text{where}\quad \tilde{B}=k[x_0,\hdots ,x_{d-1}]
\]
Note that $\pi$ is injective on $X$. Let $\tilde{X}=X\cap\tilde{B}$. We may assume that, for some $0\le l\le n$,  $m_1,\hdots ,m_l\in \tilde{X}$ and $m_{l+1},\hdots ,m_n\in\tilde{X}x_d$. 
Let $\tilde{m}_j=m_j/x_d$ for $l+1\le j\le n$.
Then:
\[
(r_1m_1+\cdots +r_lm_l)+(r_{l+1}\tilde{m}_{l+1}+\cdots +r_n\tilde{m}_n)x_d=0
\]
Therefore
\[
(\pi (r_1)m_1+\cdots +\pi (r_l)m_l)+(\pi (r_{l+1})\tilde{m}_{l+1}+\cdots +\pi (r_n)\tilde{m}_n)x_d=0
\]
which implies:
\[
\pi (r_1)m_1+\cdots +\pi (r_l)m_l=0 \quad\text{and}\quad \pi (r_{l+1})\tilde{m}_{l+1}+\cdots +\pi (r_n)\tilde{m}_n=0
\]
By induction on $d\ge 1$ (the case $d=1$ being clear) we have $\pi (r_i)=0$ for $1\le i\le n$. But then $r_1,\hdots ,r_n$ have a common factor $x_d^2$, giving a contradiction. 
Therefore, $r_i=0$ for each $i$, and $X$ is a basis for $B$ as a $B^G$-module. 
\end{proof} 

\begin{proposition}\label{free-module2}
$(B,\gamma )$ is a free $R$-module with basis $X$. 
\end{proposition}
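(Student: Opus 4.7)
The plan is to prove freeness by a leading-term argument based on a carefully chosen weight function on $B$, analogous to a Gröbner-basis computation. I introduce the weight $w_j := j(d-j)$ on the variable $x_j$, extend multiplicatively to monomials, and thereby turn $B$ into an $\N$-graded $k$-algebra. A direct computation gives the strict concavity identity
\[
2w_j - (w_l + w_{2j-l}) = 2(l - j)^2
\]
for all integers $0 \le l \le 2j$. Consequently, among the monomials $x_l x_{2j-l}$ appearing in $T_{2j}$, the weight is maximized uniquely when $l = j$, yielding $x_j^2$. Combined with the coefficient computations stated just before the definition of $y_j$, this gives that for each $0 \le j \le d$,
\[
y_j \;=\; c_j\, x_j^2 \;+\; (\text{terms of strictly smaller $w$-weight}), \qquad c_j \in k \setminus \{0\}.
\]
For $j \le m$ this uses the explicit form of $T_{2j}$; for $j \ge d-m$ the symmetry $w_{d-l} = w_l$ reduces the corresponding claim about $\alpha(T_{2(d-j)})$ to the same computation.

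The second ingredient is the bijection
\[
\N^{d+1} \times X \;\longleftrightarrow\; \N^{d+1}, \qquad (a, m) \;\longmapsto\; 2a + \mathbf{e}_m,
\]
where $\mathbf{e}_m \in \{0,1\}^{d+1}$ is the exponent vector of $m \in X$; the inverse is given by coordinate-wise quotient and remainder modulo $2$. Expanding the product $y^a m := y_0^{a_0} \cdots y_d^{a_d} \cdot m$ using the top-weight expressions above, one reads off that the unique top-weight monomial of $y^a m$ is
\[
\left(\prod_{j=0}^d c_j^{a_j}\right) x^{2a + \mathbf{e}_m},
\]
with correction terms of strictly smaller $w$-weight.

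With these ingredients in hand I would conclude via the standard leading-term principle applied to the $R$-linear map
\[
\phi \colon R \otimes_k kX \longrightarrow B, \qquad t^a \otimes m \longmapsto y^a m.
\]
For injectivity, any nonzero relation $\sum c_{a,m}\, y^a m = 0$ forces cancellation at the maximal $w$-weight $W$ attained by its nonzero summands, but the weight-$W$ contributions form a nonzero $k$-linear combination of pairwise distinct monomials $x^{2a + \mathbf{e}_m}$, which is impossible. For surjectivity, I induct on $w(x^b)$: any monomial $x^b \in B$ equals $\bigl(\prod_j c_j^{a_j}\bigr)^{-1} y^a m$ modulo terms of strictly smaller $w$-weight, which lie in the image by the inductive hypothesis. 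Hence $\phi$ is a $k$-linear (and therefore $R$-module) isomorphism, which is exactly the assertion that $(B, \gamma)$ is a free $R$-module with basis $X$.

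The main obstacle is locating a weight function that makes $x_j^2$ the unique top-weight monomial of $y_j$ simultaneously for every $j$. Strict concavity of $j \mapsto j(d-j)$ achieves this uniformly, after which the rest is routine filtered-to-graded bookkeeping.
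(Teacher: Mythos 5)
Your proof is correct, and it takes a genuinely different route from the paper's. The paper obtains linear independence indirectly: it invokes Proposition~\ref{free-module1} (freeness of $(B,\beta)$ over $K=k[x_0^2,\dots,x_d^2]$, proved via $\Z_2^{d+1}$-invariants) together with the isomorphism $K\cong S$, and then only has to prove spanning, which it does by a leading-term induction in the degrevlex order $x_d>\cdots>x_0$, in which the leading monomial of $y_i$ is $x_t^2$. You instead prove spanning and independence in one stroke from the single concave weight $w_j=j(d-j)$: the identity $2w_j-(w_l+w_{2j-l})=2(l-j)^2$ checks out, it makes $x_j^2$ the unique top-weight monomial of $y_j$ for every $j$ simultaneously (the case of the $\alpha(T_{2i})$'s reducing to the same computation via $w_{d-l}=w_l$), and the bijection $(a,m)\mapsto 2a+\mathbf{e}_m$ then gives pairwise distinct leading monomials $x^{2a+\mathbf{e}_m}$ for the products $y^am$, so the top-weight graded piece of any putative relation forces all its coefficients to vanish. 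Your argument is self-contained --- it does not use Proposition~\ref{free-module1} at all, and it reproves the algebraic independence of the $y_j$ (i.e.\ $S\cong_k R$) as a byproduct --- at the cost of redoing the independence half that the paper recycles from the $\beta$-module structure. One small point worth noting: $w$ is only a filtration, not a term order (e.g.\ $w_0=w_d=0$, so distinct monomials can share a weight), but this is harmless, since your leading monomials are distinct even within a fixed weight, and the surjectivity induction is well-founded because the weights are non-negative integers, with the weight-zero monomials handled exactly by $y_0=x_0^2$ and $y_d=(d!)^2x_d^2$.
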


\begin{proof} 
The isomorphism $\theta \colon K \to S$ defined by $\theta (x_i^2)=y_i$ induces an $R$-module isomorphism of $\bigoplus_{m\in X}\beta (R)m$ with $\bigoplus_{m\in X}\gamma (R)m$. 
By {\it Proposition\,\ref{free-module1}}, $(B,\beta )=\bigoplus_{m\in X}\beta (R)m$.  
It thus suffices to show that $B$ is spanned by $X$ as an $S$-module. 

Given $y_i$, $0\le i\le d$, there exists $t\in\{0,\hdots ,m\}$ such that $y_i$ is a linear combination of monomials $x_{t-j}x_{t+j}$, $0\le j\le t$. 
Choose a term order for monomials in $B$ so that, for each $t$:
\[
x_t^2>x_{t-1}x_{t+1}>\cdots >x_{t-j}x_{t+j}>\cdots >x_0x_{2t}
\]
For example, order the variables by $x_d>x_{d-1}>\cdots >x_1>x_0$
and consider the degree reverse lexicographical (degrevlex) order.
In this order, $x_t^2$ is the leading term of $y_i$ (up to multiplication by a nonzero constant).

Let $M\subseteq B$ be the $S$-module generated by $X$. 
If $M\ne B$, then choose $f\in B\setminus M$
with minimal leading monomial $u$. We may assume that the coefficient of $u$ is 1. 
Write $u=m^2 u'$ where $m,u'$ are monomials and $u'$ is square-free. There is an
element $g\in S$ whose leading monomial is $m^2$ (just take $m$ and replace $x_i$ by $y_i$, $0\le i\le d$). Subtracting $g u'$ from
$f$ yields a contradiction. Therefore, we must have $M=B$.
\end{proof}

\begin{proposition}\label{A1-A2}
Let $d\ge 1$ be given. 
\begin{itemize}
\item [{\bf (a)}] If $d$ is odd and $d\ne 3$ then $A_1\ne\{ 0\}$.
\item [{\bf (b)}] If $d\ne 4$ then $A_2\ne\{ 0\}$. 
\end{itemize}
\end{proposition}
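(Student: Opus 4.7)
Since the $E$-weight of $x_i$ equals $d - 2i$, the ring $A = \krn D$ is precisely the algebra of $\SL_2$-covariants of the binary $d$-form, and $A_m$ is the space of covariants of order $m$; equivalently, $A_m \neq \{0\}$ iff the irreducible $\V_m$ appears as an $\SL_2$-summand of some $\mathrm{Sym}^n \V_d$. The proof splits naturally into easy cases, handled by exhibiting explicit elements, and two substantive cases requiring a combinatorial input.

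For the easy cases: in (b) with $d$ odd, the quadratic invariant $T_{2m}$ has $E$-weight $2d - 4m = 2$ (since $d = 2m+1$), and it is nonzero because its coefficient of $x_m^2$ is $(-1)^m$; for $d = 1$ one directly has $x_0^2 \in A_2$ and $x_0 \in A_1$; for $d = 2$, the element $x_0$ has $E$-weight $d = 2$, so $x_0 \in A_2$. There remain only the nontrivial cases (a) with $d$ odd and $d \geq 5$, and (b) with $d$ even and $d \geq 6$.

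For these I would apply the Cayley--Sylvester formula,
\[
\dim_k \mathrm{Hom}_{\SL_2}\bigl(\V_m,\,\mathrm{Sym}^n \V_d\bigr) \;=\; [q^{(nd - m)/2}]\tbinom{n+d}{d}_{\!q} \;-\; [q^{(nd - m - 2)/2}]\tbinom{n+d}{d}_{\!q},
\]
specialized to $n = d$, so that the relevant polynomial is the Gaussian binomial $\binom{2d}{d}_q$ of degree $d^2$, symmetric about $d^2/2$. In case (a) the exponents $(d^2 - 1)/2$ and $(d^2 - 3)/2$ compare the peak coefficient with its immediate predecessor; in case (b) the exponents $d^2/2 - 1$ and $d^2/2 - 2$ compare two consecutive coefficients one and two steps below the peak. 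Either way, positivity of the difference follows from strict unimodality of $\binom{2d}{d}_q$ at the indicated positions.

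The principal obstacle is the quantitative strict unimodality of $\binom{2d}{d}_q$ near the peak for $d \geq 5$, which I would invoke as a classical combinatorial fact (e.g., O'Hara's constructive proof of unimodality, together with the sharper strict inequalities of Pak--Panova). The two exceptional values $d = 3$ and $d = 4$ correspond exactly to the failure of strict unimodality at the required positions: $\binom{6}{3}_q = 1 + q + 2q^2 + 3q^3 + 3q^4 + 3q^5 + 3q^6 + 2q^7 + q^8 + q^9$ has a plateau of four equal coefficients at the peak (killing (a) for $d = 3$), and $\binom{8}{4}_q$ has equal coefficients at $q^6$ and $q^7$, the two positions immediately below its peak at $q^8$ (killing (b) for $d = 4$).
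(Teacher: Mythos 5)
Your handling of the easy cases is correct, and the translation of ``$A_m\neq\{0\}$ in degree $n$'' into positivity of a difference of coefficients of $\binom{n+d}{d}_q$ via Cayley--Sylvester is the right bookkeeping. But the substantive step has a genuine gap, and it is not merely a matter of citation: the method fails outright at $d=6$. Computing $\binom{12}{6}_q=(1+q^6)\binom{11}{5}_q$ from
\[
\binom{11}{5}_q=1+q+2q^2+3q^3+5q^4+7q^5+10q^6+12q^7+16q^8+19q^9+23q^{10}+25q^{11}+29q^{12}+30q^{13}+32q^{14}+32q^{15}+\cdots
\]
gives coefficients $\dots,51,55,55,58,55,55,51,\dots$ around the peak $q^{18}$; in particular $[q^{16}]\binom{12}{6}_q=[q^{17}]\binom{12}{6}_q=55$, so the multiplicity of $\V_2$ in $\mathrm{Sym}^6\V_6$ is \emph{zero}. (Consistency check: $[q^{18}]-[q^{17}]=3$ recovers the three degree-$6$ invariants $I_2^3,I_2I_4,I_6$ of the sextic.) Thus $\binom{2d}{d}_q$ is not strictly unimodal for $d=6$, and it fails exactly at the position your argument for part (b) requires. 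This reflects a genuine fact: the binary sextic has no covariant of degree $6$ and order $2$ — its order-$2$ covariants occur in degrees $3$ and $5$, for instance, but not in degree $6$ — so the restriction to $n=d$ loses this case. Two further cautions even where the approach does work: the Pak--Panova strict unimodality theorem requires both parameters of the Gaussian binomial to be at least $8$, so $d=5,7$ lie outside its scope and must be checked by hand (for $d=5$ one gets $20>19$, which is fine); and strict unimodality is a 2013 theorem proved via Kronecker coefficients, not a classical fact — O'Hara's injection yields only the non-strict inequalities, which are useless here.

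The paper sidesteps all of this by decoupling the degree of the covariant from $d$: by Hermite reciprocity, $\dim A_{(i,5)}$ for the binary $d$-form equals $\dim A'_{(i,d)}$ for the binary quintic, and the explicit, classical groundforms of the quintic (order-$1$ covariants in degrees $5$ and $7$, order-$2$ covariants in degrees $2,6,8$, and an invariant of degree $4$) show that the degree-$5$ slice already contains the required element for every $d$ outside the stated exceptions. To repair your argument you would have to allow $n\neq d$ — e.g.\ $n=3$ or $n=5$ works for $d=6$ — and then control the relevant coefficient differences of $\binom{n+d}{d}_q$ uniformly in $d$ for a fixed small $n$, which is essentially what Hermite reciprocity accomplishes in one stroke.
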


\begin{proof}
Given $i,j\in\N$ let $A_{(i,j)}$ denote the set of elements of $A_i$ of standard degree $j$ in $x_0,\hdots ,x_d$. Also, let $B'=k[\V_5]$ and with fundamental pair $(D',U')$ and $A'=\krn D'=\bigoplus_{e\in\N}A_e^{\prime}$, and let $A_{(i,j)}^{\prime}$ be the set of elements of $A_i^{\prime}$ of standard degree $j$. Then $A_{(i,5)}$ and $A_{(i,d)}^{\prime}$ are finite dimensional vector spaces over $k$, and by Hermite reciprocity, we have:
\[
\dim A_{(i,5)}=\dim A_{(i,d)}^{\prime}
\]
See \cite{MR0974333}*{\S 3.3.2.2}. The groundforms of $A^{\prime}$ are well known. 
In particular, $A_{(1,5)}^{\prime}\ne \{ 0\}$ and $A_{(1,7)}^{\prime}\ne \{ 0\}$, and there exists a nonzero element of $A_0^{\prime}$ of standard degree 4; see \cite{MR3700208}*{\S 6, Appendix 2}. 
Therefore, $A_{(1,d)}^{\prime}\ne\{ 0\}$ for all odd $d\ge 5$. This implies $A_{(1,5)}\ne \{ 0\}$, so 
$A_1\ne \{ 0\}$ when $d$ is odd, $d\ge 5$. Since $A_1\ne \{ 0\}$
for $d=1$ as well, part (a) is confirmed. 

Similarly, $A_{(2,e)}^{\prime}\ne \{ 0\}$ for $e\in\{ 2,6,8\}$, which 
implies $A_{(2,d)}^{\prime}\ne \{ 0\}$ for all even $d\ge 2$, $d\ne 4$. By reciprocity, we have $A_{(2,5)}\ne \{ 0\}$, hence $A_2\ne\{ 0\}$, for all even $d\ge 2$, $d\ne 4$. For odd $d\ne 3$, part (a) shows $A_2\ne \{ 0\}$, and for $d=3$ we already know $A_2\ne \{ 0\}$ (the polynomial $T_2 = 2 x_0 x_2 - x_1^2 \in A_2$ cf.\ \cite{MR3700208}*{\S 8.7.1}). So part (b) is confirmed.
\end{proof}

\begin{corollary}\label{fraction-field} 
Given $d\ge 1$ define subalgebras of $k[\V_d]$:
\[
\mathcal{R}=k[\krn D,\krn U] \quad\text{and}\quad 
\mathcal{S}=\mathcal{R}[\krn E]
\]
\begin{itemize}
\item [{\bf (a)}] ${\rm frac}(\mathcal{S})={\rm frac}(B)$
\item [{\bf (b)}] If $d$ is odd then 
${\rm frac}(\mathcal{R})={\rm frac}(B)$.
\end{itemize}
\end{corollary}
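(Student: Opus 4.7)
My proof plan follows the structure suggested by Proposition~\ref{A1-A2} combined with Lemma~\ref{useful2}, together with separate arguments for the two exceptional values of $d$ (namely $d=4$ for part~(a) and $d=3$ for part~(b)).

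For part~(a), if $d\ne 4$ then Proposition~\ref{A1-A2}(b) supplies a nonzero element of $A_2$ and Lemma~\ref{useful2}(b) yields ${\rm frac}(\mathcal{S})={\rm frac}(B)$. The case $d=4$ needs direct treatment: here $A_2=\{0\}$ because the $E$-weight semigroup of $\ker D$ is generated by $\{0,4,6\}$, omitting $2$. One observes that $x_0\in\ker D$, $x_4\in\ker U$, and $x_2\in\ker E$ (since $Ex_2=0$), so $x_0,x_2,x_4\in\mathcal{S}$; then the degree-two semi-invariants $T_2,\alpha T_2, T_4$ yield $x_1^2,x_3^2,x_1x_3\in\mathcal{S}$. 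A direct check shows $T:=3x_0^2x_3-3x_0x_1x_2+x_1^3\in\ker D$, and the key observation is that
\[
T\cdot x_1 \;=\; 3x_0^2(x_1x_3)-3x_0(x_1^2)x_2+(x_1^2)^2
\]
is a polynomial in elements already in $\mathcal{S}$, hence $Tx_1\in\mathcal{S}$, and $x_1=(Tx_1)/T\in{\rm frac}(\mathcal{S})$. The symmetric argument using $\alpha T\in\ker U$ gives $x_3\in{\rm frac}(\mathcal{S})$, so ${\rm frac}(\mathcal{S})={\rm frac}(B)$.

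For part~(b), if $d$ is odd with $d\ne 3$ then Proposition~\ref{A1-A2}(a) and Lemma~\ref{useful2}(a) suffice, and for $d=1$ we have $\mathcal{R}=B$ trivially. The remaining case $d=3$ is genuinely special since $A_1=\{0\}$ (the weight semigroup of $\ker D$ is generated by $\{0,2,3\}$). My plan is to produce two polynomial relations for $x_1$ over $\mathcal{R}$ and reduce them to a linear relation via the Euclidean algorithm. Eliminating $x_2$ between $T_2=2x_0x_2-x_1^2$ and $T=3x_0^2x_3-3x_0x_1x_2+x_1^3$ gives the cubic
\[
P(X) \;=\; X^3+3T_2X+(2T-6x_0^2x_3),\qquad P(x_1)=0,
\]
while squaring $x_1^2=2x_0x_2-T_2$ and using $\alpha T_2=6x_1x_3-4x_2^2$ to substitute for $x_2^2$ gives the quartic
\[
Q(X) \;=\; X^4+2T_2X^2-6x_0^2x_3X+(T_2^2+x_0^2\,\alpha T_2),\qquad Q(x_1)=0.
\]
Two steps of the Euclidean algorithm on $P,Q\in{\rm frac}(\mathcal{R})[X]$---first $R_1:=Q-XP=-T_2X^2-2TX+(T_2^2+x_0^2\,\alpha T_2)$, then dividing $P$ by $R_1$---produce a linear remainder yielding
\[
x_1 \;=\; \frac{2x_0^2\bigl(T\cdot\alpha T_2 + 3x_3T_2^2\bigr)}{4T_2^3 + x_0^2\,T_2\cdot\alpha T_2 + 4T^2}.
\]
Numerator and denominator lie in $\mathcal{R}$, and the denominator is nonzero (verified by a single point specialization). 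Since $\alpha$ preserves $\mathcal{R}$, applying it to this formula gives $x_2\in{\rm frac}(\mathcal{R})$ as well, hence ${\rm frac}(\mathcal{R})={\rm frac}(B)$.

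The main obstacle is the exceptional case $d=3$ in part~(b): the cubic $P$ alone only shows $[{\rm frac}(B):{\rm frac}(\mathcal{R})]\le 3$, and pushing this down to $=1$ demands the additional quartic $Q$ and the explicit elimination computation above.
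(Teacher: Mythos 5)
Your proposal is correct, and the reduction to the generic case is identical to the paper's: for $d\ne 4$ in part (a) and odd $d\ne 3$ in part (b), both arguments invoke Proposition~\ref{A1-A2} together with Lemma~\ref{useful2}. Where you genuinely diverge is in the two exceptional cases. For $d=4$ the paper takes nonzero $f\in A_4$ and $g\in A_6$ and observes $x_3^2f,\,x_3^3g\in\ker E\subset\mathcal{S}$, so that $x_3=x_3^3/x_3^2\in\mathrm{frac}(\mathcal{S})$; you instead extract $x_1^2,x_3^2,x_1x_3$ from the quadratic covariants and divide $Tx_1$ by the cubic covariant $T\in\ker D$. Both work, but the paper's version needs the existence of weight-$4$ and weight-$6$ kernel elements (cited from the classical theory of the binary quartic), whereas yours is fully explicit. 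For $d=3$ the paper uses the known generators of $A$ and $\Omega$, the local slice $s=3x_0x_3-x_1x_2$ with $B_f=A_f[s]$, and the identities $6f^3x_3=x_0s^3-3gs^2+3x_0hs-gh$ and $s^2=h+2fF$ to place $s$ in $\mathrm{frac}(\mathcal{R})$; your elimination of $x_2$ to obtain the cubic $P$ and quartic $Q$ for $x_1$ over $\mathcal{R}$, followed by the Euclidean algorithm, is more elementary and self-contained (it does not presuppose the generator description of $A$ and $\Omega$), at the price of a longer computation whose endpoint must be checked. I verified your final formula: at $(x_0,x_1,x_2,x_3)=(1,2,1,1)$ one gets $T_2=-2$, $T=5$, $\alpha T_2=8$, numerator $2(40+12)=104$ and denominator $-32-16+100=52$, so the quotient is $2=x_1$ as claimed, and the denominator is indeed a nonzero polynomial (e.g.\ it equals $24$ at $(1,0,1,0)$). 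The only point to be careful about, which you do address, is that the denominator nonvanishing is an extra verification the paper's route does not require.
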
 

\begin{proof} 
Part (a). 
If $d\ne 4$ then part (a) follows by {\it Proposition\,\ref{A1-A2}(b)} and {\it Lemma\,\ref{useful2}(b)}. 
If $d=4$ there exist nonzero $f\in A_4$ and $g\in A_6$; see \cite{MR3700208}*{\S 8.7.1}. 
Therefore:
\[
x_3^2f ,x_3^3g\in B_0 = \ker E \subset\mathcal{S} \implies 
x_3^2,x_3^3\in {\rm frac}(\mathcal{S}) \implies
x_3\in {\rm frac}(\mathcal{S})
\]
By symmetry, $x_1\in{\rm frac}(\mathcal{S})$, and we see that
${\rm frac}(\mathcal{S})={\rm frac}(B)$. 

Part (b). If $d\ne 3$ this follows from {\it Proposition\,\ref{A1-A2}(a)} and {\it Lemma\,\ref{useful2}(a)}. 

Consider the case $d=3$. We have $A=k[x_0,f,g,h]$ and 
$\Omega =k[x_3,F,G,h]$ where:
\begin{eqnarray*}
f&=&2x_0x_2-x_1^2 \\
g&=&3x_0^2x_3-3x_0x_1x_2+x_1^3 \\
F&=&3x_1x_3-2x_2^2 \\
G&=&3x_0x_3^2-3x_1x_2x_3+\textstyle\frac{4}{3}x_2^3 \\
h&=&9x_0^2x_3^2-18x_0x_1x_2x_3+8x_0x_2^3+6x_1^3 x_3-3x_1^2x_2^2
\end{eqnarray*}
See \cite{MR3700208}*{\S 8.7.1}. 
Define $s=3x_0x_3-x_1x_2$. Then $Ds=f$ and $s$ is a local slice for $D$. By the Slice Theorem, $B_f=A_f[s]=A_f^{[1]}$. In particular, we find that:
\[
6f^3x_3=(x_0)s^3-(3g)s^2+(3x_0h)s-gh \quad\text{and}\quad 
s^2=h+2fF\in \mathcal{R}
\]
Therefore:
\[
x_0s^3+3x_0hs=x_0s(s^2+3h)\in R \implies s\in {\rm frac}(\mathcal{R})
\]
So ${\rm frac}(B)=({\rm frac}(A))(s)\subseteq{\rm frac}(\mathcal{R})$ implies
${\rm frac}(\mathcal{R})={\rm frac}(B)$.
\end{proof}

\begin{proposition}\label{SL2-module}
Let $W$ be a finite-dimensional $\mathrm{SL}_2$-module and let $(\delta ,\upsilon )$ be the associated linear fundamental pair on $k[W]$. 
Let $\tilde{A}=\krn\delta$ and $\epsilon =[\delta ,\upsilon]$, and define $\tilde{R}=k[\krn\delta ,\krn\upsilon ]$ and $\tilde{S}=\tilde{R}[\krn\epsilon ]$. 
\begin{itemize}
\item [{\bf (a)}] $k[W]$ is integral over $\tilde{R}$.
\item [{\bf (b)}] ${\rm frac}(\tilde{S})={\rm frac}(k[W])$.
\item [{\bf (c)}] If $\tilde{A}_1\ne\{ 0\}$ then ${\rm frac}(\tilde{R})={\rm frac}(k[W])$. 
\end{itemize}
\end{proposition}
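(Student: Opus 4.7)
The plan is to reduce to the case of an irreducible $\mathrm{SL}_2$-module, which was treated in Section 3. By complete reducibility of finite-dimensional $\mathrm{SL}_2$-representations in characteristic zero, decompose $W = V_{d_1} \oplus \cdots \oplus V_{d_n}$ into irreducible summands; this identifies $k[W]$ with the tensor product $k[V_{d_1}] \otimes_k \cdots \otimes_k k[V_{d_n}]$. Because each $V_{d_i}$ is $\mathrm{SL}_2$-stable, the derivations $\delta$, $\upsilon$, and $\epsilon = [\delta, \upsilon]$ preserve $k[V_{d_i}] \subset k[W]$ and restrict there to the basic fundamental pair $(D_i, U_i)$ of Section 3, with $E_i = [D_i, U_i]$. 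Consequently $\krn D_i \subset \krn \delta$, $\krn U_i \subset \krn \upsilon$, and $\krn E_i \subset \krn \epsilon$, so the associated subrings $\mathcal{R}_i := k[\krn D_i, \krn U_i]$ and $\mathcal{S}_i := \mathcal{R}_i[\krn E_i]$ sit inside $\tilde{R}$ and $\tilde{S}$ respectively. Trivial summands $V_0$ contribute $\mathrm{SL}_2$-invariant coordinates lying in $\tilde{A} \subset \tilde{R}$ and are harmless.

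For part (a), for each summand with $d_i \ge 1$, Proposition \ref{free-module2} exhibits $k[V_{d_i}]$ as a finitely generated free module over the subring $k[y_0^{(i)}, \ldots, y_{d_i}^{(i)}]$, whose generators lie in $\krn D_i \cup \krn U_i \subset \tilde{R}$ by construction. In particular $k[V_{d_i}]$ is integral over $\tilde{R}$. Since $k[W]$ is generated as a $k$-algebra by the union of the $k[V_{d_i}]$'s, and the integral closure of $\tilde{R}$ in $k[W]$ is a subring, $k[W]$ is integral over $\tilde{R}$.

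For part (b), apply Corollary \ref{fraction-field}(a) to each nontrivial summand: ${\rm frac}(\mathcal{S}_i) = {\rm frac}(k[V_{d_i}])$. Combined with $\mathcal{S}_i \subset \tilde{S}$, this yields ${\rm frac}(k[V_{d_i}]) \subset {\rm frac}(\tilde{S})$ for each $i$, so every coordinate of $W$ lies in ${\rm frac}(\tilde{S})$, forcing the equality ${\rm frac}(\tilde{S}) = {\rm frac}(k[W])$. Part (c) is then immediate from Lemma \ref{useful2}(a) applied to the $k$-domain $k[W]$ equipped with its fundamental pair $(\delta, \upsilon)$, since the hypothesis $\tilde{A}_1 \ne \{ 0 \}$ is exactly what that lemma requires.

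The argument is essentially organizational: the proposition packages the irreducible-case results of Section 3 and Corollary \ref{fraction-field} via complete reducibility, and no serious obstacle is anticipated. The one technical point to be careful about is the block-diagonal structure of the linear action on $W = \bigoplus V_{d_i}$, which secures the crucial kernel containments $\krn D_i \subset \krn \delta$ and its analogues; without this, the restrict-and-assemble strategy would not transport information about the summands back to the ambient derivations on $k[W]$.
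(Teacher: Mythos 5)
Your proposal is correct and follows essentially the same route as the paper: decompose $W$ into irreducibles, use Proposition~\ref{free-module2} to get integrality of each $k[\V_{d_i}]$ over $R_i=k[\krn D_i,\krn U_i]\subset\tilde{R}$, apply Corollary~\ref{fraction-field}(a) summand-by-summand for part (b), and invoke the local-slice argument (Lemma~\ref{useful}, equivalently Lemma~\ref{useful2}(a)) for part (c). The kernel containments $\krn D_i\subset\krn\delta$ etc.\ that you flag as the key technical point are exactly what the paper uses implicitly via $R_i\subset\tilde{R}$ and $S_i\subset\tilde{S}$.
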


\begin{proof} Part (a): Let $W=\bigoplus_{1\le i\le n}\V_{d_i}$ be the decomposition of $W$ into irreducible modules. Given $1\le i\le n$, let 
$k[\V_{d_i}]=k[x_{i0},\hdots ,x_{id_i}]$ with basic fundamental pair $(D_i,U_i)$ as above (i.e., $D_i(x_{ij})=x_{i,j-1}$ for $j\ge 1$ and $D_i(x_{i0})=0$, etc.). 
Let $R_i=k[\krn D_i,\krn U_i]\subset \tilde{R}$, $1\le i\le n$. 
By {\it Proposition\,\ref{free-module2}}, each $x_{ij}$ is integral over $R_i$, hence also over $\tilde{R}$. Since the $x_{ij}$ generate $k[W]$ as a $k$-algebra, 
$k[W]$ is integral over $\tilde{R}$.

Part (b). Given $1\le i\le n$, let $E_i=[D_i,U_i]$ and $S_i=R_i [\krn E_i]\subset\tilde{S}$. By {\it Proposition\,\ref{fraction-field}}, ${\rm frac}(S_i)={\rm frac}(k[\V_{d_i}])$. 
Therefore, each $x_{ij}\in{\rm frac}(\tilde{S})$, which implies ${\rm frac}(\tilde{S})={\rm frac}(k[W])$.

Part (c). If $\tilde{A}_1\ne \{0 \}$, then {\it Lemma\,\ref{useful}} implies ${\rm frac}(\tilde{R})={\rm frac}(k[W])$.
\end{proof}


\section{Fundamental Pairs for Affine Domains}\label{general-case}

Suppose that $B$ is an affine $k$-domain with nonzero fundamental pair $(D,U)$ and set $E=[D,U]$. 
Likewise, suppose that $B'$ is an affine $k$-domain with nonzero fundamental pair $(D',U')$ and set $E'=[D',U']$. 
Let $B=\bigoplus_{d\in\Z}B_d$ and $B'=\bigoplus_{d\in\Z}B_d^{\prime}$ be the $\Z$-gradings induced by the fundamental pairs (see {\it Theorem\,\ref{main1}}). 
Let $A=\krn D$ with $\N$-grading $A=\bigoplus_{d\in\N}A_d$ (see {\it Theorem\,\ref{main2}}), and likewise $A'=\krn D'=\bigoplus_{d\in\N}A_d^{\prime}$.
In this case, a $k$-algebra homomorphism $\varphi \colon B'\to B$ is {\bf equivariant} if $\varphi D'=D\varphi$ and $\varphi U'=U\varphi$. 
 
Given $f\in A_d$, let $\widehat{f}$ denote the vector space with basis 
$\{ U^jf\, |\, 0\le j\le d\}$. 
Then $\widehat{f}$ is a $(D,U)$-invariant, hence $\SL_2$-invariant, subspace; see \cite{Freudenburg2022}*{Lemma 3.5 and Lemma 3.6}. 

\begin{lemma}\label{generators} Let $f_1,\cdots ,f_r\in A$ be nonzero and homogeneous such that $A=k[f_1,\hdots ,f_r]$.
\begin{itemize}
\item [{\bf (a)}] $B=k[\widehat{f_1},\hdots ,\widehat{f_r}]$.
\item [{\bf (b)}] Let $\varphi \colon B'\to B$ be an equivariant $k$-algebra homomorphism and let 
$\tilde{\varphi}\colon A' \to A$ be its restriction. Then $\varphi$ is surjective if and only if $\tilde{\varphi}$ is surjective.
\end{itemize} 
\end{lemma}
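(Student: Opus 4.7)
My plan for (a) is to invoke the direct-sum decomposition of {\it Theorem\,\ref{main1}(b)}, namely $B = \bigoplus_{i \ge 0} U^i D^i(\mathcal{F}_i)$. Since $D^i(\mathcal{F}_i) \subseteq \krn D = A$, every $b \in B$ is a finite sum of terms $U^i(a_i)$ with $a_i \in A = k[f_1, \ldots, f_r]$. Expanding $U^i(a_i)$ via the Leibniz rule for the derivation $U$ produces a polynomial in the elements $U^\ell(f_j)$, each of which either vanishes or lies in $\widehat{f_j}$. Hence $b \in k[\widehat{f_1}, \ldots, \widehat{f_r}]$, and the reverse containment is immediate from $\widehat{f_j} \subseteq B$.

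For (b), the key preliminary observation is that an equivariant homomorphism automatically commutes with $E$: indeed $E\varphi = [D,U]\varphi = \varphi[D',U'] = \varphi E'$. Consequently $\varphi$ preserves the $\Z$-grading of {\it Theorem\,\ref{main1}(c)}, and its restriction to $\krn D'$ preserves the $\N$-grading of {\it Theorem\,\ref{main2}}, so $\tilde{\varphi}$ is a graded homomorphism $A' \to A$. The easy direction---surjectivity of $\varphi$ implies surjectivity of $\tilde\varphi$---then follows from the decomposition of {\it Theorem\,\ref{main1}(b)}: given $a \in A$, lift $a = \varphi(b)$, write $b = \sum_i (U')^i (D')^i (h_i')$ with $h_i' \in \mathcal{F}'_i$, and note that $\varphi((U')^i(D')^i(h_i')) = U^i D^i(\varphi(h_i'))$ lies in $U^i D^i(\mathcal{F}_i)$. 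Uniqueness of the decomposition in $B$ then forces $a = \varphi(h_0') \in \tilde{\varphi}(A')$ (since $a$ sits in the $i=0$ summand $A$).

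For the converse I would invoke part (a). Since $\tilde\varphi$ is graded and surjective, each homogeneous generator $f_j$ of $A$ lifts to a homogeneous element $f_j' \in A'$ of the same $E$-eigenvalue $d_j$; by {\it Theorem\,\ref{main2}(b)} we then have $\deg_{U'} f_j' = d_j = \deg_U f_j$. Equivariance gives $\varphi((U')^\ell f_j') = U^\ell f_j$ for every $\ell$, so $\varphi$ carries the analogously-defined subspace $\widehat{f_j'} \subseteq B'$ onto a spanning set of $\widehat{f_j}$. Hence $\widehat{f_j} \subseteq \varphi(B')$ for all $j$, and part (a) then forces $\varphi(B') = B$. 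The only mildly delicate point is the homogeneous lifting step; once the compatibility of $\varphi$ with the graded structures from {\it Theorems\,\ref{main1}} and {\it \ref{main2}} is in hand, everything else is a routine unpacking of those decompositions.
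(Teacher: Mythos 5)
Your proposal is correct, but it reaches part (a) by a genuinely different route than the paper. The paper orders the generators by degree and invokes \cite{Freudenburg2022}*{Theorem 3.4}, which identifies the plinth ideal as $f_tA+\cdots +f_rA$ and yields the recursion $\mathcal{F}_n=Ah_t+\cdots +Ah_r+\mathcal{F}_{n-1}$ with $h_i=U^nf_i$; induction on $n$ then places each $\mathcal{F}_n$ inside $k[\widehat{f_1},\hdots ,\widehat{f_r}]$. You instead read off from {\it Theorem\,\ref{main1}(b)} that every element of $B$ is a finite sum of terms $U^i(a)$ with $a\in D^i(\mathcal{F}_i)\subseteq A$, and expand $U^i$ of a polynomial in the $f_j$ by the Leibniz rule, each factor $U^\ell(f_j)$ being zero or lying in $\widehat{f_j}$. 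This is sound and arguably more self-contained, since it leans only on the decomposition proved in the appendix rather than on the external structure theorem. For part (b), the paper dismisses the forward implication in one sentence (it is in effect {\it Theorem\,\ref{main3}(b)} applied to the prime ideal $\krn\varphi$); your alternative via $\varphi(\mathcal{F}_i')\subseteq\mathcal{F}_i$ and uniqueness in the direct sum of {\it Theorem\,\ref{main1}(b)} is a valid, explicit substitute that does not even use primality of the kernel. Your converse direction is essentially the paper's argument --- lift the generators, transport $\widehat{f_j'}$ onto $\widehat{f_j}$ by equivariance, and conclude with part (a) --- except that you make explicit the homogeneous-lifting step (replacing an arbitrary preimage by its component of degree $d_j$ under the graded map $\tilde\varphi$), which the paper leaves implicit.
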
 

\begin{proof} Set $n_i=\deg f_i$. Assume that $n_1\le n_2\cdots \le n_r$ and let $t\in\N$ be such that $n_i=0$ if and only if $1\le i\le t-1$.
Let $\mathcal{S}=k[\widehat{f}_1,\hdots ,\widehat{f}_r]$. Given $n\ge 1$, let $\mathcal{F}_n=\krn D^{n+1}$. Clearly, $\mathcal{F}_0=A$ is contained in $\mathcal{S}$. 
By \cite{Freudenburg2022}*{Theorem 3.4}, the plinth ideal $I_1=A\cap DB$ equals $f_tA+\cdots +f_rA$, 
and if $h_i=U^nf_i$, $t\le i\le r$, then 
\[
\mathcal{F}_n=Ah_t+\cdots +Ah_r+\mathcal{F}_{n-1}
\]
as $A$-modules. If we assume, by way of induction, that $\mathcal{F}_{n-1}\subset\mathcal{S}$, then, since $A\subset\mathcal{S}$ and $h_i\in\mathcal{S}$ for each $i$, 
it follows that $\mathcal{F}_n\subset\mathcal{S}$. Therefore, $\mathcal{F}_n\subset\mathcal{S}$ for each $n\ge 1$. 
Since $B=\bigcup_n\mathcal{F}_n$, we conclude that $\mathcal{S}=B$. This shows part (a). 

For part (b), assume that $\tilde{\varphi}$ is surjective. Then $\varphi (A_0^{\prime})=A_0$ by equivariance.
Part (a) implies $B=A_0[\widehat{f}_t,\hdots ,\widehat{f}_r]$ for nonzero homogeneous $f_i\in A$, and there exist $F_t,\hdots ,F_r\in A'$ such that 
$\varphi (F_i)=f_i$, $t\le i\le r$. By equivariance, $\varphi (\widehat{F}_i)=\widehat{f}_i$ for each $i$. 
Therefore, $B=A_0[\widehat{f}_t,\hdots, \widehat{f}_r]$ is in the image of $\varphi$. 
Conversely, if $\varphi$ is surjective, then equivariance implies that $\tilde{\varphi}$ is surjective. 
\end{proof} 

In the lemma above, the vector spaces 
$\widehat{f_1},\hdots ,\widehat{f_r}$ constitute a 
{\bf presentation} of $B$ as an $\mathrm{SL}_2$-algebra. 
A presentation of $B$ determines an equivariant surjection of $k$-algebras
\[
\varphi :k[\V_{n_1}\oplus\cdots\oplus \V_{n_r}] \twoheadrightarrow B =k[\widehat{f_1},\dots ,\widehat{f_r}]
\quad (n_i=\deg f_i \, ,\, 1\le i\le r)
\]
such that $\varphi$ restricts to an equivariant surjection of vector spaces $\varphi (\V_{n_i})=\widehat{f_i}$ for each $i$. We call this the {\bf equivariant surjection determined by the presentation}.

Let $R=k[\krn D,\krn U]$ and $S=R[\krn E]$. Likewise, let $R'=k[\krn D',\krn U']$ and $S'=R'[\krn E']$. 

\begin{lemma}\label{equivariant} Assume that $\varphi \colon B' \to B$ is an equivariant surjection. Then
\begin{itemize}
\item [{\bf (a)}] $\varphi (B_d^{\prime})=B_d$ for each $d\in\Z$.
\item [{\bf (b)}] $\varphi (\krn D')=\krn D$ and $\varphi (\krn U')=\krn U$.
\item [{\bf (c)}] If $B'$ is integral over $R'$ (resp., $S'$) then $B$ is integral over $R$ (resp., $S$). 
\item [{\bf (d)}] If $R'\subset B'$ (resp., $S'\subset B'$) is a birational inclusion then $R\subset B$ (resp., $S\subset B$) is a birational inclusion.
\end{itemize}
\end{lemma}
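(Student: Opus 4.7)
The plan is to attack the four parts in order, each building on the previous.

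For \textbf{(a)}, equivariance of $\varphi$ with $D'$ and $U'$ forces equivariance with $E' = [D',U']$, so $\varphi$ carries the $E'$-eigenspace $B_d'$ into $B_d$. For surjectivity, I lift any $b \in B_d$ to some $f \in B'$, decompose $f = \sum_e f_e$ into $E'$-homogeneous components using \emph{Theorem~\ref{main1}(c)}, note that each $\varphi(f_e) \in B_e$ by the forward inclusion, and match graded components in the direct sum $B = \bigoplus_e B_e$ to conclude $\varphi(f_d) = b$. For \textbf{(b)}, the ideal $K := \ker\varphi$ is prime (since $B \cong B'/K$ is a domain), graded (by (a), because $\varphi$ preserves the grading), and $(D',U')$-invariant (by equivariance), so \emph{Theorem~\ref{main3}(b)} applied to $\mathfrak{p} = K$ delivers the result: identifying $B'/K$ with $B$ via $\varphi$, the induced fundamental pair coincides with $(D, U)$, and the theorem gives $\ker D = \varphi(\ker D')$ and $\ker U = \varphi(\ker U')$.

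For \textbf{(c)}, combining (a) and (b) yields $\varphi(R') = k[\ker D, \ker U] = R$ and $\varphi(S') = R[\ker E] = S$ (using $\varphi(\ker E') = \varphi(B_0') = B_0 = \ker E$ from (a)). Then, for $b \in B$, a lift $b' \in B'$ satisfies an integral equation $(b')^n + \sum_{i<n} r_i' (b')^i = 0$ over $R'$ (resp.\ $S'$), and applying $\varphi$ transports this to an integral equation for $b$ over $R$ (resp.\ $S$).

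For \textbf{(d)}, the approach mirrors (c): lift $b \in B$ to $b' \in B'$, invoke birationality upstairs to write $b' = \alpha'/\beta'$ with $\alpha', \beta' \in R'$ and $\beta' \ne 0$, and apply $\varphi$ to obtain $\varphi(\beta') b = \varphi(\alpha')$. When $\varphi(\beta') \ne 0$, we immediately get $b = \varphi(\alpha')/\varphi(\beta') \in {\rm frac}(R)$. The main obstacle, and the subtlest point, is the possibility that every admissible denominator $\beta'$ (every $\beta' \in R'$ with $\beta' b' \in R'$) lies in the graded prime $K' := K \cap R'$, forcing $\varphi(\beta') = 0$. To circumvent this I would first reduce to homogeneous $b' \in B_d'$ using part (a), choose $\alpha', \beta'$ to be $E'$-homogeneous in ${\rm frac}(R')$ (possible since the grading extends to the fraction field), and then exploit the graded $(D',U')$-invariance of $K$ (via \emph{Theorem~\ref{main3}}) together with the freedom in the choice of representation of $b'$ to manoeuvre to a homogeneous denominator outside $K'$. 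The $S'$ case is handled identically, with $S'$ in place of $R'$ and $K \cap S'$ in place of $K'$.
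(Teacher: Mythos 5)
Your treatment of parts (a), (b) and (c) coincides with the paper's. Part (a) is the inclusion $\varphi(B_d')\subseteq B_d$ coming from equivariance with $E'=[D',U']$, plus surjectivity; your explicit matching of graded components is just a more careful version of the paper's one-line conclusion. Part (b) is exactly the paper's move: $\ker\varphi$ is a graded $(D',U')$-invariant prime, and \emph{Theorem~\ref{main3}(b)} applied to it gives $\varphi(\ker D')=\ker D$ and $\varphi(\ker U')=\ker U$. Part (c) is the same transport of $\varphi(R')=R$, $\varphi(S')=S$ and of monic integral equations under $\varphi$.

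Part (d) is where the comparison is interesting. The paper's proof is precisely the naive argument you describe at the start: lift $b$ to $b'=r'/s'$ with $r',s'\in R'$, push $s'b'=r'$ down to $sb=r$, and write $b=r/s$ --- without ever verifying that $s=\varphi(s')\ne 0$. You are right to flag this as the delicate point, and the worry is not idle: without the equivariant structure the statement is false. For instance, $S'=k[x,xy,y^2]\subset B'=k[x,y]$ is a birational (and integral) inclusion, yet reducing modulo the prime $(x)$ yields $k[y^2]\subset k[y]$, which is not birational; here every admissible denominator for $y$ does lie in the kernel, exactly the failure mode you describe. So some input beyond ``surjection carrying $R'$ onto $R$'' is genuinely needed. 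However, your proposed repair --- pass to homogeneous $b'$, $\alpha'$, $\beta'$ and then ``manoeuvre to a homogeneous denominator outside $K'$'' using the gradedness and $(D',U')$-invariance of $K$ --- is a plan rather than an argument: you never produce such a denominator, and nothing you invoke (primeness or gradedness of $K\cap R'$) by itself rules out the conductor ideal $\{\beta'\in R':\beta'b'\in R'\}$ being contained in $K\cap R'$. As written, your part (d) is therefore incomplete; to be fair, the paper's is equally terse and does not acknowledge the issue at all. A complete argument has to exploit more of the specific situation --- for example, that in the only place the lemma is applied one has $B'=k[W]$, where the denominators arising in \emph{Corollary~\ref{fraction-field}} and \emph{Lemma~\ref{useful2}} come from local slices and can be controlled explicitly, or one can argue directly on $B$ via \emph{Lemma~\ref{useful}} when $A_1$ or $A_2$ is nonzero --- rather than pushing forward a single abstract fraction.
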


\begin{proof} (a) By equivariance, $\varphi (B_d^{\prime})\subset B_d$ for each $d\in\Z$, so by surjectivity of $\varphi$ we must have $\varphi (B_d^{\prime}) = B_d$ for each $d\in\Z$.

(b) By hypothesis, $J=\krn\varphi$ is a $(D',U')$-invariant ideal of $B'$. So $(D,U)=(D'/J ,U'/J)$ is the induced fundamental pair on $B=B'/J$.
By {\it Theorem\,\ref{main3}(b)}, we have $\varphi (\krn D')=\krn (D'/J)$ and $\varphi (\krn U')=\krn (U'/J)$. 

(c) Since 
$B_0^{\prime}=\krn E'$ and $B_0=\krn E$, part (a) implies 
$\varphi (\krn E')=\krn E$. By part (b), it follows that
$\varphi (R')=R$ and $\varphi (S')=S$. 
Therefore, when $B'$ is integral over $R'$, $\varphi (B')=B$ is integral over $\varphi (R')=R$, and when $B'$ is integral over $S'$, $\varphi (B')=B$ is integral over $\varphi (S')=S$. 

(d) Assume that ${\rm frac}(B')={\rm frac}(R')$. Let $b\in B$ be given and choose $b'\in \varphi^{-1}(b)$. Write $b'=\frac{r'}{s'}$ for $r',s'\in R'$.
Then $r:=\varphi (r')\in R$ and $s:=\varphi (s')\in R$. We have:
\[
\textstyle s'b'=r' \implies sb=r \implies b=\frac{r}{s}\in {\rm frac}(R)
\]
Therefore, ${\rm frac}(B)={\rm frac}(R)$. The same argument works if $S'\subset B'$ is a birational inclusion. 
\end{proof}

The following theorem and its corollary are two of our main results.

\begin{theorem}\label{general} 
Let $B$ be an affine $k$-domain with nontrivial fundamental pair $(D,U)$ and induced $\Z$-grading $B=\bigoplus_{i\in\Z}B_i$. Define $E=[D,U]$, $A=\krn D$ and $\Omega=\krn U$. Set $R=k[A,\Omega ]$ and $S=R[\krn E]$. The following properties hold. 
\begin{itemize}
\item [{\bf (a)}] $B$ is integral over $R$. 
\item [{\bf (b)}] The inclusion $S\to B$ is birational.
\item [{\bf (c)}] If $A_1\ne\{ 0\}$ then the inclusion $R\to B$ is birational. 
\end{itemize}
\end{theorem}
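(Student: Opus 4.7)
My plan is to reduce all three statements to \emph{Proposition\,\ref{SL2-module}} by constructing an equivariant surjection onto $B$ from a polynomial ring carrying a linear $\mathrm{SL}_2$-representation, and then transporting the conclusions back to $B$ via \emph{Lemma\,\ref{equivariant}}. The setup rests on the fact that $A=\krn D$ is finitely generated as a $k$-algebra; this follows from the classical finite-generation theorem for invariants of the unipotent radical of a Borel subgroup of a reductive group, applied to the $\mathrm{SL}_2$-action corresponding to $(D,U)$ after base change to the algebraic closure. Since $A=\bigoplus_{d\in\N}A_d$ is $\N$-graded by \emph{Theorem\,\ref{main2}(a)}, one can choose homogeneous generators $f_1,\hdots ,f_r$ with $\deg f_i=n_i$.

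By \emph{Lemma\,\ref{generators}(a)} the subspaces $\widehat{f}_1,\hdots ,\widehat{f}_r$ form a presentation of $B$, and the associated equivariant surjection
\[
\varphi \colon B' := k[\V_{n_1}\oplus\cdots\oplus \V_{n_r}] \twoheadrightarrow B
\]
provides the bridge to the polynomial case. Writing $(D',U')$ for the linear fundamental pair on $B'$, $E'=[D',U']$, $\tilde A = \krn D'$, $R'=k[\krn D',\krn U']$, and $S'=R'[\krn E']$, I apply \emph{Proposition\,\ref{SL2-module}} to the $\mathrm{SL}_2$-module $W=\V_{n_1}\oplus\cdots\oplus \V_{n_r}$: this simultaneously yields that $B'$ is integral over $R'$, that $\mathrm{frac}(S')=\mathrm{frac}(B')$, and that $\mathrm{frac}(R')=\mathrm{frac}(B')$ whenever $\tilde A_1\ne\{0\}$.

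Finally I transport each property across $\varphi$. Parts (a) and (b) of the theorem follow directly from \emph{Lemma\,\ref{equivariant}(c)} and \emph{(d)}, respectively. For part (c), assume $A_1\ne\{0\}$; since $\varphi$ is equivariant it preserves the $E$-grading, and combining \emph{Lemma\,\ref{equivariant}(a)} with \emph{(b)} gives $\varphi(\tilde A_d)=A_d$ in every degree, so in particular $A_1\ne\{0\}$ forces $\tilde A_1\ne\{0\}$. Thus \emph{Proposition\,\ref{SL2-module}(c)} applies on $B'$, and \emph{Lemma\,\ref{equivariant}(d)} transfers the conclusion to $\mathrm{frac}(R)=\mathrm{frac}(B)$. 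The step I expect to require the most care is the finite generation of $A$; once this is in hand, the rest of the argument is a clean reduction in which \emph{Proposition\,\ref{SL2-module}} does the computational work on the polynomial-ring side and \emph{Lemma\,\ref{equivariant}} serves as a packaged transport mechanism for integrality and birationality under equivariant surjections.
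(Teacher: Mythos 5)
Your proposal is correct and follows essentially the same route as the paper: both construct the equivariant surjection $\varphi\colon k[W]\twoheadrightarrow B$ from a homogeneous presentation of $A$ (whose finite generation the paper likewise takes from \cite{Freudenburg2022}), establish the properties on the polynomial side via {\it Proposition\,\ref{free-module2}} and {\it Proposition\,\ref{SL2-module}}, and transfer them with {\it Lemma\,\ref{equivariant}}. The only cosmetic difference is in part (c), where the paper applies {\it Lemma\,\ref{useful}} directly to a nonzero element of $A_1$ on $B$ itself, whereas you take a harmless detour through $\tilde A_1\ne\{0\}$ on $k[W]$ and transfer back.
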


\begin{proof} Part (a). Let $B$ have presentation as in {\it
Lemma\,\ref{generators}(a)}, and let $\varphi :k[W]\to B$ be the
equivariant surjection determined by this presentation, where $W$ is a
finite-dimensional $\mathrm{SL}_2$-module. 
Let $(\delta ,\upsilon)$ be the fundamental pair induced by $W$. 
By {\it Proposition\,\ref{free-module2}}, $k[W]$ is integral over $\mathcal{R}=k[\krn\delta ,\krn\upsilon ]$. By {\it Lemma\,\ref{equivariant}(b)}, $\varphi (\mathcal{R})=R$.
It thus follows by {\it Lemma\,\ref{equivariant}(c)} that $B$ is integral over $R$. 

Part (b). Let $\epsilon =[\delta ,\upsilon ]$ and $\mathcal{S}=\mathcal{R}[\krn\epsilon ]$. By {\it Proposition\,\ref{fraction-field}}, the inclusion $\mathcal{S}\to k[W]$ is birational. 
By {\it Lemma\,\ref{equivariant}(d)}, it follows that the inclusion $S\to B$ is birational. 

Part (c). If $A_1\ne \{ 0\}$, then {\it Lemma\,\ref{useful}} implies that ${\rm frac}(R)={\rm frac}(B)$. 
\end{proof}

\begin{corollary}\label{ideals} In addition to the hypotheses of {\it Theorem\,\ref{general}}, assume that $B$ is a normal affine $k$-domain. 
\begin{itemize}
\item [{\bf (a)}] $\mathcal{C}_S(B)\ne (0)$
\item [{\bf (b)}] If $A_1\ne\{ 0\}$ then $\mathcal{C}_R(B)\ne (0)$.
\end{itemize}
\end{corollary}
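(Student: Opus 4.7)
The plan is to apply \emph{Lemma \ref{conductor}} with a suitable $k$-affine subring of $B$ that shares its fraction field and over which $B$ is integral. All of the needed inputs are already packaged in \emph{Theorem \ref{general}}: integrality of $B$ over $R$ (hence over $S$) from part (a); birationality of $S \hookrightarrow B$ from part (b); and, for part (b) of the corollary, birationality of $R \hookrightarrow B$ from part (c) under the hypothesis $A_1 \ne \{0\}$.

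For part (a), first I would reduce to a $k$-affine subring $S_0 \subseteq S$, because $S$ itself is not \emph{a priori} finitely generated as a $k$-algebra. Writing $B = k[b_1, \ldots, b_n]$, each $b_i$ satisfies a monic integral equation with finitely many coefficients in $S$, and admits a representation $b_i = p_i / q_i$ with $p_i, q_i \in S$ by birationality. Let $S_0$ be the $k$-subalgebra of $S$ generated by all these integral coefficients together with the $p_i$ and $q_i$. Then $S_0$ is $k$-affine, $B$ is integral over $S_0$, and $\mathrm{frac}(S_0) = \mathrm{frac}(B)$.

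Next I would identify $B$ with the integral closure $\mathcal{L}_{S_0}$ of $S_0$ in $\mathrm{frac}(S_0)$. The inclusion $B \subseteq \mathcal{L}_{S_0}$ is immediate from the construction, and the reverse inclusion uses normality of $B$: any element of $\mathcal{L}_{S_0} \subseteq \mathrm{frac}(B)$ is integral over $S_0$, hence over $B$, and therefore lies in $B$. Applying \emph{Lemma \ref{conductor}} to $S_0$ gives a nonzero $c \in \mathcal{C}_{S_0}(\mathcal{L}_{S_0}) = \mathcal{C}_{S_0}(B)$; since $c \in S_0 \subseteq S$ and $cB \subseteq S_0 \subseteq S$, we have $c \in \mathcal{C}_S(B)$, proving (a). Part (b) follows by the same argument with $R$ in place of $S$ and an analogously constructed $R_0 \subseteq R$, invoking \emph{Theorem \ref{general}(c)} for the birationality.

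I do not foresee any significant obstacle, since the substantive work is already done in \emph{Theorem \ref{general}}. The only point requiring care is the reduction to a $k$-affine subring, handled cleanly by the construction above using only that $B$ itself is $k$-affine and that $\mathrm{frac}(S) = \mathrm{frac}(B)$ (respectively $\mathrm{frac}(R) = \mathrm{frac}(B)$).
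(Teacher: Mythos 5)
Your argument is correct, and it reaches the conclusion by a route that differs from the paper's in one meaningful respect. The paper first establishes that $R$ and $S$ are themselves $k$-affine --- quoting finite generation of $A=\krn D$ and $\Omega=\krn U$ from \cite{Freudenburg2022}*{Theorem 3.4} and of $B_0=\krn E$ as a ring of torus invariants --- and then applies \emph{Lemma \ref{conductor}} directly to $S$ (resp.\ $R$), after observing that normality of $B$ together with \emph{Theorem \ref{general}} gives $B=\mathcal{L}_S$ (resp.\ $B=\mathcal{L}_R$). You instead sidestep the affineness of $S$ and $R$ entirely: you pass to a finitely generated subalgebra $S_0\subseteq S$ containing the integral-equation coefficients of the generators of $B$ and the numerators and denominators witnessing birationality, check that $B=\mathcal{L}_{S_0}$ by the same normality argument, extract a nonzero conductor element $c\in\mathcal{C}_{S_0}(B)$ from \emph{Lemma \ref{conductor}}, and observe that $c\in S$ with $cB\subseteq S_0\subseteq S$ forces $c\in\mathcal{C}_S(B)$. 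Both proofs are sound; yours is more self-contained in that it needs only that $B$ is $k$-affine, while the paper's is shorter at the cost of invoking the (nontrivial) finite generation of the kernels involved, but in exchange it delivers the stronger information that $S$ and $R$ are affine, which is of independent interest. As a small aside, your citations of \emph{Theorem \ref{general}}(b) for the birationality of $S\to B$ and (c) for that of $R\to B$ are the correct ones; the paper's proof of the corollary cites them with the labels interchanged.
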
 

\begin{proof} It is well-known that $A$ and $\Omega$ are affine; see, for example, \cite{Freudenburg2022}*{Theorem 3.4}. 
Also, $B_0=\krn E$ is affine, since it is the ring of invariants of a torus action on $B$. Therefore, $R$ and $S$ are $k$-affine. 
For the integral domain $K$, let $\mathcal{L}_K$ denote the integral closure of $K$ in its field of fractions. 

Part (a). 
Let nonzero $b\in B$ be given. By {\it Theorem\,\ref{general}(c)}, there exist nonzero $s_1,s_2\in S$ such that $b=s_1/s_2$. 
By {\it Theorem\,\ref{general}(a)}, $b=s_1/s_2$ is integral over $S$, so $b\in \mathcal{L}_S$. It follows that $B=\mathcal{L}_B=\mathcal{L}_S$.
By {\it Lemma\,\ref{conductor}}, we obtain $\mathcal{C}_S(B)\ne (0)$. 

Part (b). Assume that $A_1\ne\{ 0\}$. By {\it Theorem\,\ref{general}(b)}, there exist nonzero $r_1,r_2\in R$ such that $b=r_1/r_2$. 
By {\it Theorem\,\ref{general}(a)}, $b=r_1/r_2$ is integral over $R$, so $b\in \mathcal{L}_R$. It follows that $B=\mathcal{L}_B=\mathcal{L}_R$.
By {\it Lemma\,\ref{conductor}}, we obtain $\mathcal{C}_R(B)\ne (0)$.
\end{proof} 

\subsection{Proof of Theorem\,\ref{criterion}}
Let $\mathcal{R},\mathcal{S}\subset B$ be as defined above. 

Part(a). Assume that there exists nonzero $f\in A_2$. By {\it Corollary\,\ref{ideals}}, $\mathcal{C}_{\mathcal{S}}(B)\ne (0)$, which confirms the first condition for $(E,D,U)$ to be a compatible triple. 
Set $g=Uf$. Then $g\in B_0=\krn E$, $Dg\in A_2$ and $Ug\in\Omega_{-2}$ are all nonzero. Equivalently, 
\[
g\in (\krn D^2\setminus\krn D)\cap\krn E \quad \text{and}\quad
g\in (\krn U^2\setminus\krn U)\cap\krn E
\]
which confirms the second condition for $(E,D,U)$ to be a compatible triple.

Conversely, assume that $(E,D,U)$ is a compatible triple. Then there exists 
\[
g\in (\krn D^2\setminus\krn D)\cap\krn E
\]
and since $\krn E=B_0$, we have $g\in B_0$, so $Dg\in A_2\setminus\{ 0\}$. 

Part (b). From $A_1 \neq \{0\}$ and {\it Corollary\,\ref{ideals}} we have $\mathcal{C}_{\mathcal{R}}(B)\ne (0)$, which confirms the first condition for $(D,U)$ to be a compatible pair. By hypothesis, there exists nonzero $f\in A_1$. Then $Uf\in\Omega\setminus\{ 0\}$ and we have
\[
f\in (\krn U^2\setminus\krn U)\cap\krn D
\]
which confirms the second condition for $(D,U)$ to be a compatible pair. 

Conversely, assume that $(D,U)$ is a compatible pair. 
By\,{\it Theorem\,\ref{main2}}, if $h\in A_d$ is nonzero, then $\deg_U(h)=d$.
Therefore:
\[
(\krn U^2\setminus\krn U)\cap\krn D=A_1\setminus\{ 0\}
\]
Since $(\krn U^2\setminus\krn U)\cap\krn D\ne\emptyset$ by hypothesis, the proof is complete.
\qed

\subsection{Application to $\mathrm{SL}_2$-modules}

\begin{corollary} Let $W$ be a nontrivial finite-dimensional $\mathrm{SL}_2$-module. Let $B=k[W]$ with induced fundamental pair $(\delta ,\upsilon )$ and set $\epsilon = [\delta ,\upsilon ]$. 
Write $W=W'\oplus e\V_0$ for submodule $W'$ and maximal $e\in\N$. 
\begin{itemize}
    \item [{\bf (a)}] If $W'\ne \V_4$ then $A_2\ne\{ 0\}$ and $ (\epsilon ,\delta ,\upsilon )$ is a compatible triple for $B$.
    \item [{\bf (b)}] If $W'\ne \V_3$ and if $\V_d$ is a submodule of $W$ for some odd $d\ge 1$ then $A_1\ne\{ 0\}$ and $(\delta ,\upsilon )$ is a compatible pair for $B$.
\end{itemize}
\end{corollary}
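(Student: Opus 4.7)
The plan is to reduce both claims to showing $A_2 \ne \{0\}$ (part (a)) and $A_1 \ne \{0\}$ (part (b)), after which \textit{Theorem\,\ref{criterion}} immediately yields the compatibility conclusions. The conceptual tool I intend to use is the standard observation that $\dim_k A_n(B)$ equals the multiplicity of the irreducible $\mathrm{SL}_2$-module $\V_n$ inside $B = k[W]$: a nonzero element of $A_n$ is by definition killed by $\delta$ and has $\epsilon$-eigenvalue $n$, hence is a highest-weight vector of weight $n$ generating a submodule isomorphic to $\V_n$; conversely each $\V_n$-submodule contains such a highest-weight vector lying in $A_n$. The task therefore becomes to locate $\V_2$ or $\V_1$ as an $\mathrm{SL}_2$-submodule of some multigraded piece of $k[W]$, using that for any subdecomposition $\V_{e_1}\oplus\cdots\oplus\V_{e_r}\subseteq W$, the pieces $\mathrm{Sym}^{i_1}(\V_{e_1})\otimes\cdots\otimes\mathrm{Sym}^{i_r}(\V_{e_r})$ sit $\mathrm{SL}_2$-equivariantly inside $k[W]$.

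Write $W' = \bigoplus_j \V_{d_j}$ with each $d_j \geq 1$. For part (a), if some $d_j \ne 4$ then \textit{Proposition\,\ref{A1-A2}(b)} provides $A_2(\V_{d_j}) \ne \{0\}$, which embeds into $A_2(B)$. The remaining case is $W' = n\V_4$ with $n\geq 2$ (forced by $W' \ne \V_4$ together with the non-triviality of $W$), and I plan to finish it via the Clebsch--Gordan decomposition $\V_4 \otimes \V_4 = \V_8 \oplus \V_6 \oplus \V_4 \oplus \V_2 \oplus \V_0$: the $\V_2$-summand sits in the bidegree-$(1,1)$ piece of $k[2\V_4] \subseteq k[W]$.

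For part (b), if some odd $d_j \ne 3$ occurs, then \textit{Proposition\,\ref{A1-A2}(a)} finishes it directly. Otherwise the only odd summand type in $W'$ is $\V_3$, and $W' \ne \V_3$ forces either $2\V_3 \subseteq W'$ or $\V_3 \oplus \V_e \subseteq W'$ for some even $e \geq 2$. In the first subcase I expand the bidegree-$(2,1)$ piece $\mathrm{Sym}^2(\V_3) \otimes \V_3 = (\V_6 \oplus \V_2) \otimes \V_3$ and extract a copy of $\V_1$ from $\V_2 \otimes \V_3 = \V_5 \oplus \V_3 \oplus \V_1$. In the second subcase I use the standard decomposition $\mathrm{Sym}^2(\V_e) = \bigoplus_{k=0}^{e/2} \V_{2e - 4k}$ for even $e \geq 2$, which always contains $\V_4$, so that $\V_3 \otimes \mathrm{Sym}^2(\V_e) \supseteq \V_3 \otimes \V_4 \supseteq \V_1$ inside the bidegree-$(1,2)$ piece of $k[\V_3 \oplus \V_e]$.

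The main obstacle is this last subcase of part (b): for $e \geq 6$, the bidegree-$(1,1)$ tensor $\V_3 \otimes \V_e = \V_{e+3} \oplus \V_{e+1} \oplus \V_{e-1} \oplus \V_{e-3}$ misses $\V_1$ entirely, so bidegree $(1,1)$ does not suffice; one is forced to pass to $(1,2)$ and exploit the small $\V_4$-summand of $\mathrm{Sym}^2(\V_e)$. The rest of the argument is essentially a mechanical case analysis combined with invocations of \textit{Proposition\,\ref{A1-A2}} and Clebsch--Gordan.
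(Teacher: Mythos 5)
Your proposal is correct, and in the exceptional cases it takes a genuinely different --- and at one point more careful --- route than the paper. The shared skeleton is identical: reduce everything to $A_2\ne\{0\}$ resp.\ $A_1\ne\{0\}$ via {\it Theorem\,\ref{criterion}}, and invoke {\it Proposition\,\ref{A1-A2}} whenever $W'$ contains an irreducible summand $\V_{d}$ with $d\ne 4$ (resp.\ odd $d\ne 3$), since $A_i(k[\V_d])$ embeds into $A_i(B)$. Where you diverge is in the leftover cases: the paper exhibits explicit highest-weight vectors (a bidegree-$(1,1)$ covariant for $\V_4\oplus\V_4$ and a bidegree-$(2,1)$ covariant for $\V_3\oplus\V_3$), whereas you locate $\V_2$ resp.\ $\V_1$ abstractly via Clebsch--Gordan and the plethysm $\mathrm{Sym}^2(\V_e)$, using the correspondence between nonzero elements of $A_n$ and copies of $\V_n$ in $k[W]$ --- which is exactly {\it Theorem\,\ref{main2}(b)} read through the invariant subspaces $\widehat{f}$. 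Both are sound; the explicit elements are self-contained and computation-checkable, while your decompositions avoid writing any formulas. More significantly, your case split for part (b) is finer than the paper's: after disposing of $\V_3\oplus\V_3\subseteq W'$, the paper asserts that otherwise ``$\V_d$ is a submodule for some odd $d\ne 3$,'' which overlooks the configuration in which $W'$ contains exactly one copy of $\V_3$ together with even summands only (e.g.\ $W'=\V_3\oplus\V_2$ satisfies all the hypotheses yet has no odd summand other than a single $\V_3$). Your second subcase --- extracting $\V_1\subseteq\V_3\otimes\V_4\subseteq\V_3\otimes\mathrm{Sym}^2(\V_e)$ in bidegree $(1,2)$ --- is precisely what is needed to close that gap, and your remark that the bidegree-$(1,1)$ piece $\V_3\otimes\V_e$ misses $\V_1$ once $e\ge 6$ correctly explains why one cannot stay in bidegree $(1,1)$.
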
 

\begin{proof}
    Let $W'=\bigoplus_{1\le i\le n}e_iV_{d_i}$ where $d_i,e_i\in\N$, $d_i\ne d_j$ if $i\ne j$ and $d_i,e_i\ge 1$. Let $A=\krn\delta$
    with induced grading $A=\bigoplus_{d\in\N}A_d$. 

    Part (a). By {\it Theorem\,\ref{criterion}}, it suffices to show that $A_2\ne \{ 0\}$ in this case. If $\V_4\oplus \V_4$ is a submodule of $W'$, 
    let $(x_i,y_j)$ be coordinates for $\V_4\oplus \V_4$, $0\le i,j\le 3$. Then: 
    \[
    x_0y_3-x_1y_2+x_2y_1-x_3y_0\in A_2\setminus\{ 0\}
    \]
    If $\V_4\oplus \V_4$ is not a submodule of $W'$ then $d_i\ne 4$ for some $i$, and {\it Proposition\,\ref{A1-A2}} implies $A_2\ne\{ 0\}$. 

    Part (b). By {\it Theorem\,\ref{criterion}}, it suffices to show that $A_1\ne \{ 0\}$ in this case. If $\V_3\oplus \V_3$ is a submodule of $W'$, 
    let $(x_i,y_j)$ be coordinates for $\V_3\oplus \V_3$, $0\le i,j\le 2$. Then: 
    \[
    y_2(2x_0x_2-x_1^2)-y_1(3x_0x_3-x_1x_2)+y_0(3x_1x_3-2x_2^2)\in A_1\setminus\{ 0\}
    \]
    If $\V_3\oplus \V_3$ is not a submodule of $W'$ then 
    $V_d$ is a submodule for some odd $d\ne 3$ and 
    {\it Proposition\,\ref{A1-A2}} implies $A_1\ne\{ 0\}$. 
\end{proof}


\section{Applications to Smooth Complex Affine Varieties}\label{applications}

From now on we work over the ground field $\C$.

\subsection{The Algebraic Density Property}
We extend the algebraic criterion for compatible pair/triple to a criterion for the algebraic density property, aiming to find more examples of smooth affine $\mathrm{SL}_2$-varieties enjoying this property. 
First we recall Varolin's notion of algebraic density property, cf. Definition \ref{def-ADP}:

We say that a smooth complex algebraic variety $M$ has the \textbf{algebraic density property} if the Lie algebra generated by $\C$-complete algebraic vector fields on $M$ coincides with the Lie algebra of all algebraic vector fields on $M$.

\begin{theorem}[\cite{MR2385667}*{Theorem 2, Remark 2.7}, \cite{Andrist.Freudenburg.Huang.Kutzschebauch.Schott}*{Theorem 2.17}] \label{theorem: pairCriterion}
	Let $M$ be a smooth complex affine variety such that
	\begin{enumerate}
		\item[(a)]		the group $\mathrm{Aut}_{\rm alg}(M)$ of algebraic automorphisms acts transitively on $M$,
            \item[(b)]      there exist $x_0 \in M$, $v \in T_{x_0} M$ such that the image of $v$ under the induced action of the isotropy group $(\mathrm{Aut}_{\rm alg}(M))_{x_0}$ of $x_0$ on $T_{x_0} M$ spans the tangent space, and
		\item[(c)]		there exists a compatible $n$-tuple ($n \ge 2$) of complete algebraic vector fields on $M$.
	\end{enumerate}
		Then $M$ has the algebraic density property. 
\end{theorem}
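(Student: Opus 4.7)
Let $\mathcal{L} \subset \mathfrak{X}_{\rm alg}(M)$ denote the Lie algebra generated by the complete algebraic vector fields on $M$; the goal is to show $\mathcal{L} = \mathfrak{X}_{\rm alg}(M)$. The plan has two halves: first exploit hypothesis (c) to produce a large $\mathbb{C}[M]$-submodule inside $\mathcal{L}$, and then use the homogeneity hypotheses (a) and (b) to spread it across $M$.

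The first half is a graph induction on the rooted tree $(G,\pi,\epsilon)$ of the compatible $n$-tuple $(\theta_1,\dots,\theta_n)$ with $\pi(\mathrm{root})=\theta_1$. For a leaf edge $(v,w)$ with label $\epsilon_{v,w}$, the field $\epsilon_{v,w}\theta_w$ is complete (since $\epsilon_{v,w}\in\ker\theta_w$) and $\theta_v$ is complete, so
\[
[\theta_v,\epsilon_{v,w}\theta_w]=\theta_v(\epsilon_{v,w})\,\theta_w+\epsilon_{v,w}[\theta_v,\theta_w]\in\mathcal{L}.
\]
Since $\theta_v(\epsilon_{v,w})\in\ker\theta_v$ is nonzero, this yields a non-trivial element $\theta_v(\epsilon_{v,w})\theta_w\in \mathcal{L}$ modulo a term that feeds into the next step of the induction. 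Proceeding up the tree from the leaves toward the root, and combining the resulting fields by further Lie brackets with fields of the form $a\theta_j$ for $a\in\ker\theta_j$ (which are complete and hence in $\mathcal{L}$), one obtains $f\theta_1\in\mathcal{L}$ for every $f$ in the subalgebra $\mathbb{C}[\ker\theta_i]_{1\le i\le n}$. By condition (1) of compatibility this subalgebra contains a nonzero $\mathbb{C}[M]$-ideal $I$, so $I\cdot\theta_1\subset\mathcal{L}$.

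For the second half, I would use the group of algebraic automorphisms. Hypothesis (a) says $\mathrm{Aut}_{\rm alg}(M)$ acts transitively, and in fact the subgroup generated by time-$1$ maps of complete algebraic vector fields is already transitive (a standard consequence of having one complete field with non-constant flow and using brackets). Pushing $I\cdot\theta_1$ by such automorphisms produces, for every point $x\in M$ outside the zero locus $V(I)$, vector fields in $\mathcal{L}$ with prescribed value at $x$; hypothesis (b), applied at $x_0$ and transported around, ensures that the collection of values at $x$ spans $T_xM$. One then verifies that $\mathcal{L}$ is a $\mathbb{C}[M]$-submodule of $\mathfrak{X}_{\rm alg}(M)$ modulo a negligible part, by noting that for $\xi\in\mathcal{L}$ and $h\in\mathbb{C}[M]$ one can write $h\xi$ as a combination of brackets of complete fields via the identity $[h\eta,\xi]=h[\eta,\xi]-\xi(h)\eta$ once enough complete fields $\eta$ are available, which in turn is supplied by the translates of $I\cdot\theta_1$.

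The final step is the standard density argument: given any $\zeta\in\mathfrak{X}_{\rm alg}(M)$, cover $M$ by finitely many Zariski open sets on which $\zeta$ can be written, via a polynomial ``partition of unity'' built from sections of $I$ and its translates, as a sum of fields that evaluate prescribed tangent vectors at each point. Since $\mathcal{L}$ contains all such local contributions (by the previous paragraph) and is closed under finite sums, one concludes $\zeta\in\mathcal{L}$. The main obstacle is the tree induction in the first half: one has to bookkeep the ``error'' terms $\epsilon[\theta_v,\theta_w]$ so that they are absorbed at later steps, which is exactly where the rooted-tree structure and the placement of the compatibility elements in $\ker\pi(w)$ (rather than merely $\ker\pi(v)^2\setminus\ker\pi(v)$) is used; the orbit-and-partition argument in the second half is by now fairly routine once the module $I\cdot\theta_1\subset\mathcal{L}$ is in hand.
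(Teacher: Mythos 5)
First, a point of reference: the paper does not prove this theorem at all — it imports it from Kaliman--Kutzschebauch \cite{MR2385667} and from \cite{Andrist.Freudenburg.Huang.Kutzschebauch.Schott}, so your sketch can only be measured against the proofs in those sources. Your overall architecture does match theirs (step one: the compatible $n$-tuple yields a nonzero $\C[M]$-submodule $I\cdot\theta_1$ inside the Lie algebra $\mathcal{L}$ generated by complete fields; step two: homogeneity upgrades this to all of $\mathfrak{X}_{\rm alg}(M)$), but both steps as you describe them have genuine gaps.

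In step one, the identity $[\theta_v,\epsilon\theta_w]=\theta_v(\epsilon)\theta_w+\epsilon[\theta_v,\theta_w]$ is correct, but your plan to have the term $\epsilon[\theta_v,\theta_w]$ ``absorbed at later steps'' of the tree induction has no mechanism behind it: that term is a \emph{function times} an element of $\mathcal{L}$, and $\mathcal{L}$ is not a $\C[M]$-module, so nothing produced further up the tree will cancel it and the induction does not close. The actual proof eliminates it exactly by antisymmetrization: since $\theta_w(\epsilon)=0$,
\[
[\theta_v,\epsilon\theta_w]-[\epsilon\theta_v,\theta_w]
=\theta_v(\epsilon)\theta_w+\theta_w(\epsilon)\theta_v
=\theta_v(\epsilon)\theta_w ,
\]
and the second bracket is admissible because $\epsilon\theta_v$ is \emph{itself complete}: the condition $\epsilon\in\ker\pi(v)^2\setminus\ker\pi(v)$ says $\deg_{\theta_v}\epsilon=1$, and a field $p\,\theta$ with $\theta$ locally nilpotent and $\deg_\theta p\le 1$ has an entire flow. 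This is precisely the role of the $\ker\pi(v)^2$ half of condition (2)(iii), which you use only to get $\theta_v(\epsilon)\ne 0$ and otherwise dismiss; without invoking the completeness of $\epsilon\theta_v$ you cannot isolate $\theta_v(\epsilon)\theta_w$, and the inclusion $I\cdot\theta_1\subset\mathcal{L}$ is not established. (The same two-bracket identity with coefficients $f\in\ker\theta_v$, $g\in\ker\theta_w$ inserted is what propagates products of kernel elements along each edge toward the root.)

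In step two you attempt to show that $\mathcal{L}$ is ``a $\C[M]$-submodule modulo a negligible part''; this is neither true in general nor what the argument needs, and the identity $[h\eta,\xi]=h[\eta,\xi]-\xi(h)\eta$ presupposes exactly the module property that is at issue. The correct route (Theorem 1 of \cite{MR2385667}) is to fix the finitely generated submodule $L=\sum_j\alpha_j^*(I\theta_1)$ for finitely many $\alpha_j\in\mathrm{Aut}_{\rm alg}(M)$ — each translate is again an ideal times a complete field, hence contained in $\mathcal{L}$, since conjugation by an algebraic automorphism preserves completeness — and to use (a) and (b) to arrange that the fiber $L_x$ spans $T_xM$ at every $x\in M$; as $M$ is smooth affine, $\mathfrak{X}_{\rm alg}(M)$ is a finitely generated projective $\C[M]$-module and Nakayama's lemma then gives $L=\mathfrak{X}_{\rm alg}(M)\subset\mathcal{L}$. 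Your ``partition of unity'' paragraph gestures at this but never produces the submodule whose fibers are checked pointwise, which is the actual content of the second half.
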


For a complex variety $M$, we write $\mathrm{SAut}(M)$ for the subgroup of the algebraic automorphism group $\mathrm{Aut}_{\rm alg}(M)$, which is generated by all algebraic one-parameter unipotent subgroups of $\mathrm{Aut}_{\rm alg}(M)$ and call $\mathrm{SAut}(M)$ the \textbf{special automorphism group} of $M$. The vector field corresponding to such an algebraic one-parameter unipotent subgroup is defined by a locally nilpotent derivation.




\begin{lemma} \label{lemma: transitve-tangSemihomo}
	Let $M$ be a smooth complex affine algebraic variety of dimension at least $2$. If the special automorphism group $\mathrm{SAut}(M)$ acts transitively on $M$, then for any $x_0 \in M$, the induced action of the isotropy group $(\mathrm{Aut}_{\rm alg}(M))_{x_0}$ of $x_0$ on $T_{x_0} M\setminus \{0\}$ is transitive.
\end{lemma}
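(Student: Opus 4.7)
The plan is to realize an arbitrary element of $\mathrm{SL}(T_{x_0}M,\omega_{x_0})$ as the $1$-jet at $x_0$ of some automorphism fixing $x_0$, using the jet-transitivity result Theorem \ref{thm:jetTransitive}, and then to invoke the elementary fact that $\mathrm{SL}_n(\C)$ acts transitively on $\C^n\setminus\{0\}$ whenever $n\ge 2$.

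In more detail, I would first fix an algebraic volume form $\omega$ on $M$; in the intended applications of this paper (Calogero--Moser spaces and smooth cyclic quiver varieties) $\omega$ is provided by the symplectic structure. Let $\mathcal{N}$ be the set of all locally nilpotent derivations of $\C[M]$. Then $\mathcal{N}$ is saturated in the sense of \cite{MR3039680}*{Definition 2.1}: it is stable under conjugation by $\mathrm{SAut}(M)$, and $fD\in \mathcal{N}$ for every $D\in \mathcal{N}$ and every $f\in\ker D$. The group generated by $\mathcal{N}$ is by definition $\mathrm{SAut}(M)$, and by hypothesis it acts transitively on $M$, so the open orbit $O$ appearing in Theorem \ref{thm:jetTransitive} is all of $M$.

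Given $v_1,v_2\in T_{x_0}M\setminus\{0\}$, I would choose a linear map $L\in \mathrm{SL}(T_{x_0}M,\omega_{x_0})$ with $L(v_1)=v_2$; such an $L$ exists because $\dim M\ge 2$. Apply Theorem \ref{thm:jetTransitive} with $Z=\{x_0\}$ and $m=1$ to the $1$-jet sending $x_0$ to $x_0$ with differential $L$; this jet preserves $\omega$ by construction and trivially injects $Z$ into $O=M$. The conclusion then produces $g\in \mathrm{SAut}(M)\subseteq \mathrm{Aut}_{\rm alg}(M)$ with $g(x_0)=x_0$ and $dg_{x_0}=L$, so $g\in (\mathrm{Aut}_{\rm alg}(M))_{x_0}$ and $dg_{x_0}(v_1)=v_2$, as required.

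The principal obstacle I foresee is the existence of the algebraic volume form $\omega$ demanded by Theorem \ref{thm:jetTransitive}. For the smooth symplectic varieties that motivate this paper the form is automatic from the symplectic structure; for the lemma in full generality one would need either to produce such an $\omega$ from the transitivity hypothesis alone or to invoke a volume-form-free variant of the jet-transitivity theorem, and this is where the additional effort would have to be spent.
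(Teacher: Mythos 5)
Your proof is correct and follows essentially the same route as the paper: the paper likewise applies Theorem~\ref{thm:jetTransitive} with $G=\mathrm{SAut}(M)$, $\mathcal{N}$ the set of all locally nilpotent vector fields, and the single open orbit $O=M$, deducing transitivity on $T_{x_0}M\setminus\{0\}$ from $1$-jet interpolation at the single point $x_0$. Your caveat about the algebraic volume form required in the statement of Theorem~\ref{thm:jetTransitive} is a fair observation, but it applies equally to the paper's own (terser) proof, which invokes the same theorem without commenting on that hypothesis.
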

\begin{proof}
    This is a direct consequence of \cite{MR3039680}*{Theorem 4.2} if we take $G = \mathrm{SAut}(M)$. 
\end{proof}

For a smooth affine $\mathrm{SL}_2$-variety $X$, let $\C[X]= \bigoplus_{d\in\Z} \C[X]_d$ be the $\Z$-grading induced by the torus action. Denote by $D,U$ the two fundamental LNDs and with the same notation the two $\C$-complete vector fields associated with them. Furthermore for $d \in \N$ denote by $A_d = (\ker D)_d = \ker D \cap \C[X]_d$ the elements of degree $d$ in $\ker D$.

\begin{theorem} \label{criterionADP}
    Let $M$ be a smooth affine variety satisfying
    \begin{enumerate}[label=(\alph*)]
        \item     the special automorphism group $\mathrm{SAut}(M)$ acts transitively on $M$, and
        \item     there is a nontrivial algebraic $\mathrm{SL}_2$-action on $M$ with nontrivial $A_1$ or $A_2$.
    \end{enumerate}
    Then $M$ has the algebraic density property. 
\end{theorem}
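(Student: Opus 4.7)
The plan is to verify the three hypotheses of Theorem \ref{theorem: pairCriterion} one by one, using Lemma \ref{lemma: transitve-tangSemihomo} for the tangent space condition and Theorem \ref{criterion} to extract the required compatible tuple from the $\mathrm{SL}_2$-action.

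First I would handle condition (a) of Theorem \ref{theorem: pairCriterion}. Since $\mathrm{SAut}(M)\subseteq \mathrm{Aut}_{\mathrm{alg}}(M)$ by definition, transitivity of $\mathrm{SAut}(M)$ on $M$ (hypothesis (a) of the theorem we are proving) immediately implies transitivity of $\mathrm{Aut}_{\mathrm{alg}}(M)$ on $M$. Before invoking Lemma \ref{lemma: transitve-tangSemihomo} I should also record that $\dim M\ge 2$: a nontrivial algebraic $\mathrm{SL}_2$-action forces $\dim M\ge 2$, because every $1$-dimensional affine variety has an automorphism group with no $3$-dimensional simple subgroup (the candidates $\mathbb{A}^1$ and $\mathbb{C}^\ast$ have solvable automorphism groups, and all other smooth affine curves have automorphism groups that are at most $1$-dimensional).

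Next, for condition (b), I would apply Lemma \ref{lemma: transitve-tangSemihomo} directly. With $\mathrm{SAut}(M)$ transitive on $M$ and $\dim M\ge 2$, the lemma yields, for every $x_0\in M$, a transitive action of the isotropy group $(\mathrm{Aut}_{\mathrm{alg}}(M))_{x_0}$ on $T_{x_0}M\setminus\{0\}$. Choosing any nonzero $v\in T_{x_0}M$, the orbit of $v$ fills $T_{x_0}M\setminus\{0\}$ and therefore spans $T_{x_0}M$, which is exactly the required condition (b).

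For condition (c), I would translate the nontrivial $\mathrm{SL}_2$-action on $M$ into a nontrivial fundamental pair $(D,U)$ on $B=\mathbb{C}[M]$, with $E=[D,U]$ generating the maximal torus. The vector fields corresponding to $D$ and $U$ are complete since they are the infinitesimal generators of the algebraic $\mathbb{G}_a$-subgroups of $\mathrm{SL}_2$ acting on $M$, and $E$ is complete because it generates the diagonal $\mathbb{C}^\ast$-subgroup. Now invoke Theorem \ref{criterion}: if $A_2\ne\{0\}$ then $(E,D,U)$ is a compatible triple of complete algebraic vector fields on $M$, and if $A_1\ne\{0\}$ then $(D,U)$ is a compatible pair. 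In either case, hypothesis (b) of the theorem we are proving produces the compatible $n$-tuple required by condition (c) of Theorem \ref{theorem: pairCriterion}.

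Combining these three verifications, Theorem \ref{theorem: pairCriterion} delivers the algebraic density property for $M$. The proof is essentially a packaging exercise; the only mildly delicate point is making sure the hypotheses of Lemma \ref{lemma: transitve-tangSemihomo} (notably $\dim M\ge 2$) are met, and that one must invoke Theorem \ref{criterion} in the normal affine setting, which holds since smooth affine varieties are normal. I do not anticipate a genuine obstacle beyond this bookkeeping.
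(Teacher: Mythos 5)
Your proposal is correct and follows essentially the same route as the paper: reduce to Theorem \ref{theorem: pairCriterion}, use Lemma \ref{lemma: transitve-tangSemihomo} for the first two conditions, and use Theorem \ref{criterion} for the compatible pair/triple. The only difference is that you spell out two points the paper leaves implicit (that a nontrivial $\mathrm{SL}_2$-action forces $\dim M\ge 2$, and that the fields $D$, $U$, $E$ are complete), which is harmless added bookkeeping.
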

\begin{proof}
    By {\it Proposition\,\ref{lemma: transitve-tangSemihomo}} transitivity of $\mathrm{SAut}(M)$-action implies the first two conditions of {\it Theorem\, \ref{theorem: pairCriterion}}. Then the algebraic density property follows from {\it Theorem\,\ref{criterion}} and {\it Theorem\, \ref{theorem: pairCriterion}}.
\end{proof}

\begin{remarks}
    (i) Arzhantsev et al.\ \cite{MR3039680} introduced the notion of flexibility for any complex variety $M$: A point $x \in M$ is called \textbf{flexible} if its tangent space $T_x M$ is spanned by finitely many locally nilpotent derivations. $M$ is called \textbf{flexible} if all regular points $M_{\mathrm{reg}}$ of $M$ are flexible. In particular, they proved the following characterization for an irreducible complex affine variety $M$ of dimension at least $2$:
    \begin{enumerate}[label=(\alph*)]
        \item The group $\mathrm{SAut}(M)$ acts transitively on $M_{\mathrm{reg}}$.
        \item The group $\mathrm{SAut}(M)$ acts infinitely transitively on $M_{\mathrm{reg}}$.
        \item $M$ is a flexible variety.
    \end{enumerate}
    Hence Condition (a) in {\it Theorem\,\ref{criterionADP}} is equivalent to $M$ being flexible.
    
    (ii) When $A_1$ is nontrivial or $\C[X]$ is a quotient of $\C[\V_3]$, then $(D, U)$ forms a compatible pair of LNDs and $M$ has the so-called algebraic overshear density property by \cite{Andrist:2024aa}*{Proposition 3.3}.
\end{remarks}

In the following, we choose flexible smooth affine $\mathrm{SL}_2$-varieties with nontrivial $A_1$ or $A_2$ in their coordinate rings, applying {\it Theorem\,\ref{criterionADP}}, to find new examples of smooth affine algebraic varieties with the algebraic density property. 

\subsection{Calogero--Moser spaces} \label{sec: CaMo}

As an example we consider the Calogero--Moser space $\mathcal{C}_n, n \in \N$ is defined as a geometric quotient:
\[
\{ (X,Y) \in \mathrm{Mat}_{n \times n}(\C) \oplus \mathrm{Mat}_{n \times n}(\C): \mathrm{rank}([X,Y]+ I_n) =1 \} /\!/ \GL_n(\C)
\]
and the $\GL_n(\C)$-action is given by simultaneous conjugation. This action is free, see Wilson \cite{MR1626461}, thus $\mathcal{C}_n$ is a smooth affine variety of dimension $2n$. 
We also have a nontrivial $\mathrm{SL}_2$-action 
\[
    \begin{pmatrix}
        a & b \\ c & d
    \end{pmatrix}  \cdot (X, Y) = (a X + b Y, c X + d Y)
\]
which preserves the commutator $[X, Y]$, and induces the fundamental pair 
\[
    D = X \frac{\partial}{\partial Y} = \sum_{i,j=1}^n X_{ij} \frac{\partial}{\partial Y_{ij}}, \quad U = Y \frac{\partial}{\partial X} = \sum_{i,j=1}^n Y_{ij} \frac{\partial}{\partial X_{ij}}
\]    
The group generated by the special automorphisms 
\begin{align*}
    (X, Y) \mapsto (X, Y + t X^k) \\
    (X, Y) \mapsto (X + t Y^k, Y)
\end{align*}
$k \in \N_0, t \in \C$, acts transitively on $\mathcal{C}_n$, see Berest and Wilson \cite{MR1785579}. Moreover, the function $f=\tr X$ is in $\ker D$ and $\deg_U f = 1$. Hence by {\it Theorem\,\ref{criterionADP}} the Calogero--Moser space $\mathcal{C}_n$ has the algebraic density property.

\subsection{Quiver varieties}
Before introducing the next example, we first recall the definition of a quiver variety.

Take a quiver $Q = (I, E)$, where $I$ is the vertex set and $E$ the edge set. For an edge $a \in E$ with $a \colon i \to j$, denote the head $h(a) = j$ and the tail $t(a) = i$. The double $\Bar{Q}$ of $Q$ is the quiver obtained by adding an arrow $a^\ast \colon  j \to i$ for each arrow $a \colon i \to j$ in $Q$. The space of representations of $Q$ with dimension vector $\beta \in \N^I$ is 
\begin{align*}
    \mathrm{Rep}(Q, \beta) = \bigoplus_{a \in E} \mathrm{Mat}_{\beta_{h(a)} \times \beta_{t(a)}}( \C )
\end{align*}
Each component $\beta_i$ of $\beta$ is the dimension of the vector space $V_i \cong \C^{\beta_i}$ at the vertex $i$. Thus representations in the same isomorphism class belong to the same $G(\beta)$-orbit, where 
\[
    G(\beta) = \left( \prod_{i \in I} \GL_{\beta_i}(\C) \right) / \C^\ast
\]
acts on $\mathrm{Rep}(Q, \beta)$ by conjugation. To make the action effective, quotient out one $\C^*$ for every loop in the quiver. 

For the double $\Bar{Q}$ we can identify $\mathrm{Rep}(\Bar{Q}, \beta)$ with $\mathrm{T}^\ast \mathrm{Rep}(Q, \beta)$ using the trace pairing. Hence $\mathrm{Rep}(\Bar{Q}, \beta)$ can be endowed with the $G(\beta)$-invariant symplectic form $\omega$ given by $\sum_{a \in E} \tr (d X_a \wedge d X_{a*})$. Then the $G(\beta)$-action induces the moment map 
\begin{align*}
    \mu_\beta &\colon \mathrm{Rep}(\bar{Q}, \beta) \to \mathrm{End}(\beta)_0, \\ 
    \mu_\beta(X)_i &= \sum_{a \in E, h(a) = i} X_a X_{a^*} - \sum_{a \in E, t(a) = i} X_{a^*} X_a
\end{align*}
where $\mathrm{End}(\beta)_0 = \{ (\theta_i) : \sum_{i \in I} \tr (\theta_i) = 0 \}$ is contained in $\mathrm{End}(\beta) = \bigoplus_{i \in I} \mathrm{Mat}_{\beta_i \times \beta_i}(\C)$. For $\eta \in \C^I$, let $\theta^\eta = (\theta^\eta_i) = (\eta_i I_{\beta_i}) \in \mathrm{End}(\beta)$. If $\eta \cdot \beta = 0$ then $\theta^\eta \in \mathrm{End}(\beta)_0 $. 

Given $\eta \in \C^I$ and $\beta \in \N^I$ with $\eta \cdot \beta = 0$, the quiver variety associated to $Q$ is the GIT quotient
\[
    \mathfrak{M}^\eta (Q, \beta) = \mu^{-1}_\beta(\theta^\eta) /\!/ G(\beta)
\]
which is a reduced irreducible scheme that describes the isomorphism classes of semisimple representations of the so-called deformed preprojective algebra with dimension vector $\beta$, see Crawley-Boevey \cite{MR1834739}. 

\subsection{Calogero--Moser spaces with rank (at most) two condition}

One natural generalization of the example in {\it Section\,\ref{sec: CaMo}} is to allow also higher ranks. For rank at most two the following complex affine variety $N_{n, \tau}, n \in \N$ was studied by Bielawski and Pidstrygach in \cite{MR2739794}
\begin{align*}
	 \{ (A, B, v, w) \in \mathrm{T}^\ast (\matnn \oplus \matv): [A, B] - vw = \tau \cdot I_n \} /\!/ \GL_n(\C)
\end{align*}
where $\tau \in \C$, and the $\GL_n(\C)$-action is given by 
\begin{align} \label{equation: GLn-action}
		g \cdot (A, B, v, w) = (g A g^{-1}, g B g^{-1}, g v, w g^{-1} )
\end{align}
where $A, B \in \matnn, v \in \matv, w \in \matw$ and $g \in \GL_n(\C)$. Here we use the natural identification $( \matv )^\ast \cong \matw $ and write $\mathfrak{gl}_k$ for the Lie algebra of the Lie group $\GL_k (\CC)$. For such a generalized Calogero--Moser space, Gibbons and Hermsen \cite{MR0761664} suggested the physical interpretation with particles that possess $r$ inner degrees of freedom, while we consider the case of rank $r$ at most two. Note that $\mathcal{C}_n$ can be embedded into $N_{n,1}$ by choosing e.g. $v = ( v_1, 0)$, $w = \begin{pmatrix} w_1 \\ 0 \end{pmatrix}$ and that for any $\tau, \tau' \in \C^\ast$, $\ins$ and $N_{n, \tau'}$ are isomorphic to each other. 

To interpret the affine variety $\ins$ as a symplectic quotient of $\mathrm{T}^\ast (\matnn \oplus \matv)$ by $\GL_n(\CC)$, we start with the following moment map $\mu$ for the $\GL_n(\CC)$-action in \eqref{equation: GLn-action}
\begin{align*}
	\mu \colon \mathrm{T}^\ast(\matnn \oplus \matv) \to \mathfrak{gl}_{n}, (A, B, v, w) \mapsto [A, B] - vw
\end{align*}
When $\tau \neq 0$, $\GL_n(\CC)$ acts freely on $\mu^{-1}(\tau \cdot I_n)$ and hence 
\[
	\ins = \mu^{-1}(\tau \cdot I_n) / \GL_n(\CC)
\]
is smooth. 

In \cite{MR2739794} $\ins$ was identified as a subvariety in the
moduli space of representations in $(\CC^n, \CC^1)$ of the double of
the following quiver $Q$:
\[
\begin{tikzcd}
\bullet \arrow[out=150,in=210,loop] \arrow[r,bend left]  & \circ \arrow[l,bend left,swap]
\end{tikzcd}
\]

To make this embedding explicit, one extends
the pair $(A,B)$ into a pair of square matrices $(\hat{A}, \hat{B})$
of size $n+1$, and then fills the last column and the last row with the components of $v$ and $w$ such that the original $\GL_n(\CC)$-action is induced by an appropriate action by conjugation on  $(\hat{A}, \hat{B})$. More precisely: Write $v = ( v_1, v_2)$, $w = \begin{pmatrix} w_1 \\ w_2 \end{pmatrix}$ and let
\begin{align} \label{equation: hatrep}
	\hat{A} = \begin{pmatrix} A & v_1 \\ w_2 & 0 \end{pmatrix} 
	\quad
	\hat{B} = \begin{pmatrix} B & v_2 \\ -w_1 & 0 \end{pmatrix}
\end{align}
Now identify the group $\GL_n(\CC)$ with the subgroup $H$ of $\GL_{n+1}(\CC)$ which consists of 
\begin{align*}
	\begin{pmatrix}	L & 0 \\ 0 & 1 \end{pmatrix}, \quad L \in \GL_n(\CC) 
\end{align*}
The $\GL_n(\CC)$-action on the quadruple $(A, B, v, w)$ turns into an $H$-conjugation on the pair $(\hat{A}, \hat{B})$, which in turn gives the new moment map $\mu_H$ as the upper-left $(n \times n)$-minor of the commutator 
\[
	[\hat{A}, \hat{B}] = \begin{pmatrix}  [A, B] - vw & A v_2 - B v_1 \\ w_2 B + w_1 A & \ast \end{pmatrix}
\]

Let $\bar{Q}$ be the double of the quiver $Q$ and $\mathrm{Rep}(\bar{Q}, \beta) \cong \mathrm{T}^\ast \mathrm{Rep} (Q, \beta)$ be the symplectic vector space of all representations of $\bar{Q}$ with dimension vector $\beta = (n,1)$. An element of $\mathrm{Rep} (\bar{Q}, \beta)$ can be written as
\[	( A, B, X_1, X_2, Y_1, Y_2)	\]
where $A, B \in \matnn, X_1, X_2 \in \mathrm{Mat}_{n \times 1}(\C), Y_1, Y_2 \in \mathrm{Mat}_{1 \times n}(\C)$. The following action
\begin{align*}
	&(g, h) \cdot (A, B, X_1, X_2, Y_1, Y_2) = \\  &( g A g^{-1}, g B g^{-1}, g X_1 h^{-1}, g X_2 h^{-1}, h Y_1 g^{-1}, h Y_2 g^{-1} ), 
\end{align*}
of $G (\beta) = \left( \GL_n(\CC) \times \GL_1(\CC) \right) / \CC^\ast$ on $\mathrm{Rep}(\bar{Q}, \beta)$ with $g \in \GL_n(\CC), h \in \GL_1 (\CC)$ is effective and induces a moment map $\mu_\beta$ that takes $(A, B, X_1, X_2, Y_1, Y_2)$ to
\[
	\left( [A, B] + X_1 Y_2 - X_2 Y_1, Y_1 X_2 - Y_2 X_1 \right) \in \mathfrak{gl}_n \oplus \mathfrak{gl}_1 
\]
see \cite{MR2739794}*{Part 2}.
For the fixed point $O$ of coadjoint $G(\beta)$-action
\[	O = \begin{pmatrix} \tau \cdot I_{n} & 0 \\ 0 & - n \tau \end{pmatrix} \in \mathfrak{gl}_n \oplus \mathfrak{gl}_1
\]
the symplectic quotient $\mu_\beta^{-1}(O) / G(\beta)$ is per definition the quiver variety $\mathfrak{M}^\eta(Q, \beta)$ associated to $Q$ with $\eta = (1, -n)$. Moreover, it is isomorphic to the affine variety $\ins$ by the mapping
\begin{align*}
	X_1 \mapsto -v_1, \, X_2 \mapsto v_2, \, Y_1 \mapsto w_1, \, Y_2 \mapsto w_2
\end{align*}

Denote by $\CC \bar{Q}$ the path algebra of $\bar{Q}$. This is an algebra over the ring $R := \CC^2$ with component-wise multiplication, generated by the arrows in $\bar{Q}$ with the concatenation as multiplication. The idempotents of $R$ correspond to constant paths at the vertices of $Q$. The group $\mathrm{Aut}_R (\CC \bar{Q})$ of $R$-automorphisms of the path algebra $\CC \bar{Q}$ acts on the symplectic vector space $\mathrm{Rep} (\bar{Q}, \beta)$, see \cite{MR2739794}*{Section 5}. The stabilizer 
\[	\mathrm{Aut}_R (\CC \bar{Q}; c) =  \big \{ \phi \in \mathrm{Aut}_R (\CC \bar{Q}) : \phi (c) = c \big \}
\]
acts on $\mathrm{Rep}(\bar{Q}, \beta)$ such that the moment map $\nu$ for the action of $G (\beta)$ is preserved, see \cite{MR2739794}*{Section 6}.
Here,
\[
	c := [ a, a^\ast] + [x, x^\ast] + [y, y^\ast], \quad Q:
	\begin{tikzcd}
	\bullet \arrow[out=150,in=210,loop, "a"] \arrow[r,bend left, "y"]  & \circ \arrow[l,bend left,swap, "x"]
	\end{tikzcd}
\]
with $a^\ast, x^\ast, y^\ast$ denoting the remaining arrows of
$\bar{Q}$, which are the reverse arrows of $a, x, y$, respectively,
and $[a, a^\ast]$ denoting the commutator $a a^\ast - a^\ast a$.
Hence, the subgroup $\mathrm{Aut}_R (\CC \bar{Q}; c)$ acts on the
quiver variety $\mathfrak{M}^\eta(Q, \beta)$, which is isomorphic to
the variety $\ins$. Thus, we can identify $\mathrm{Aut}_R (\CC \bar{Q}; c)$ as a subgroup of $\mathrm{Aut} ( N_{n, \tau} )$.

\subsubsection{Transitive $\mathrm{SAut}$-action}
Let us consider the subgroup 
\[  T := \mathrm{TAut}_R (\C \bar{Q}; c)
\]
of $\mathrm{Aut}_R (\C \bar{Q}; c)$ generated by strictly triangular automorphisms and $\mathrm{Aff}_c$. An automorphism in $\mathrm{Aut}_R (\C \bar{Q}; c)$ is strictly triangular if it is identity on $\C Q$. The subgroup $\mathrm{Aff}_c$ consists of the affine transformations of $\mathrm{span}(a, a^*, x, y, x^*, y^*)$ preserving $c$ and equals $\mathrm{ASL}_2(\C) \times \mathrm{GL}_2(\C)$. Here, $\mathrm{ASL}_2(\C)$ is the group of affine transformations on $\C_{a, a^*}^2$ and $\mathrm{GL}_2(\C)$ acts on $\mathrm{span}(x, y, x^*, y^*)$ as 
\[
    \begin{pmatrix} -x \\ y^* \end{pmatrix}
    \mapsto
    g  \begin{pmatrix} -x \\ y^* \end{pmatrix}, \quad 
    \begin{pmatrix} (x^* & y) \end{pmatrix} \mapsto \begin{pmatrix} (x^* & y) \end{pmatrix} g^{-1} \text{ for } g \in \mathrm{GL}_2(\C).
\]

In \cite{MR2739794}*{Theorem 8.1} it is shown that $T$ acts transitively on $N_{n, \tau}$ when $\tau \neq 0$. 
In their subsequent discussion \cite{MR2739794}*{Remark 8.3} they also pointed out that the center of $T$ acts trivially on $\ins$. 
Since all other generators of the group $T$ are special automorphisms, cf.\  \cite{MR2739794}*{Section 7}, it follows that the subgroup $\mathrm{SAut} (\ins)$ acts transitively on $\ins$. Therefore we have the following

\begin{lemma} \label{lemma: transitivity}
	$\mathrm{SAut} (\ins)$ acts transitively on $\ins$ when $\tau \neq 0$.
\end{lemma}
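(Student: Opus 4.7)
The plan is to derive the lemma directly from the ingredients the authors assembled in the paragraph immediately preceding the statement. By \cite{MR2739794}*{Theorem 8.1}, the tame automorphism group $T := \mathrm{TAut}_R(\C\bar{Q}; c)$ acts transitively on $\ins$ whenever $\tau \neq 0$. The goal is therefore to replace $T$ by the a priori smaller group $\mathrm{SAut}(\ins)$ without losing transitivity.

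To achieve this I would use the generating set of $T$ given in \cite{MR2739794}*{Section 7}, which consists of a central torus together with a finite collection of one-parameter unipotent subgroups. By \cite{MR2739794}*{Remark 8.3} the central torus acts trivially on $\ins$, so the orbit of any point of $\ins$ under $T$ coincides with its orbit under the subgroup $T' \subseteq T$ generated by the noncentral unipotent generators alone. Each such generator induces an algebraic $\mathbb{G}_a$-action on $\ins$, that is, a one-parameter unipotent subgroup of $\mathrm{Aut}_{\rm alg}(\ins)$; by definition, the image of $T'$ in $\mathrm{Aut}_{\rm alg}(\ins)$ therefore lies inside $\mathrm{SAut}(\ins)$. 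Combining these three observations gives the desired transitivity of $\mathrm{SAut}(\ins)$ on $\ins$.

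The only step requiring any verification is the passage from the formal one-parameter unipotent subgroups of $T$ acting on the quiver representation space $\mathrm{Rep}(\bar{Q}, \beta)$ to bona fide algebraic $\mathbb{G}_a$-actions on the GIT quotient $\ins = \mu_\beta^{-1}(O)/G(\beta)$. This is essentially routine: each unipotent generator is the exponential of a locally nilpotent derivation that preserves the element $c = [a,a^\ast] + [x,x^\ast] + [y,y^\ast]$ (hence the moment map constraint $\mu_\beta = O$) and commutes with the $G(\beta)$-action used to form the quotient, so it descends to an algebraic $\mathbb{G}_a$-action on $\ins$. I expect the main obstacle to be purely expository, namely citing the explicit form of the relevant generators from \cite{MR2739794}*{Section 7} precisely enough to make the unipotency and algebraicity on the quotient manifest; no new computation is required.
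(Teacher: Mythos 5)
Your proposal is correct and follows essentially the same route as the paper: cite \cite{MR2739794}*{Theorem 8.1} for transitivity of $T$, discard the center via \cite{MR2739794}*{Remark 8.3}, and observe that the remaining generators from \cite{MR2739794}*{Section 7} are special automorphisms, hence land in $\mathrm{SAut}(\ins)$. The descent of these generators to the quotient, which you flag as the one point needing verification, is already covered by the paper's preceding discussion identifying $\mathrm{Aut}_R(\C\bar{Q};c)$ as a subgroup of $\mathrm{Aut}(\ins)$.
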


\subsubsection{The \texorpdfstring{$\mathrm{SL}_2$}--action}
Consider the following $\mathrm{SL}_2(\mathbb{C})$-action $\varphi \colon \mathrm{SL}_2(\mathbb{C}) \to \mathrm{End}(\mathrm{T}^\ast \mathfrak{s}_{n+1})$ 
\begin{align} \label{Ins-SL2}
	\varphi \left( \begin{pmatrix} a & b \\ c & d \end{pmatrix} \right) ( \hat{A}, \hat{B}) = ( a \hat{A} + b \hat{B}, c \hat{A} + d \hat{B} ) 
\end{align}

\begin{lemma} \label{lemma: sl2-action}
	The action $\varphi$ induces an $\mathrm{SL}_2(\mathbb{C})$-action on the quotient $N_{n, \tau}$. 
\end{lemma}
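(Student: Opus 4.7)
The plan is to verify the three standard conditions that allow a linear action on an ambient space to descend to a symplectic/GIT quotient: (i) $\varphi$ preserves the ambient vector space in which the embedding $\ins \hookrightarrow \mathrm{T}^\ast \mathfrak{s}_{n+1}/\!/H$ lives; (ii) $\varphi$ commutes with the $H$-action used to form the quotient; and (iii) $\varphi$ preserves the level set $\mu_H^{-1}(\tau \cdot I_n)$ of the moment map. Once these three are checked, the action of $\mathrm{SL}_2(\mathbb{C})$ automatically descends to $\ins = \mu_H^{-1}(\tau \cdot I_n)/H$.

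For (i), I would observe that both $\hat{A}$ and $\hat{B}$ in \eqref{equation: hatrep} have zero in their $(n+1, n+1)$-entry by construction, and this property is preserved under arbitrary $\C$-linear combinations; hence $a\hat{A}+b\hat{B}$ and $c\hat{A}+d\hat{B}$ again lie in $\mathfrak{s}_{n+1}$. Also, $\varphi$ is a group homomorphism $\mathrm{SL}_2(\C) \to \mathrm{GL}(\mathrm{T}^\ast\mathfrak{s}_{n+1})$ by direct matrix multiplication. For (ii), $H \cong \GL_n(\C)$ acts on pairs $(\hat{A},\hat{B})$ by simultaneous conjugation, which is $\C$-linear in each slot; hence for any $h \in H$ and $g \in \mathrm{SL}_2(\C)$ we have
\[
\varphi(g)\cdot\bigl(h\hat{A}h^{-1},\,h\hat{B}h^{-1}\bigr) = \bigl(h(a\hat{A}+b\hat{B})h^{-1},\,h(c\hat{A}+d\hat{B})h^{-1}\bigr) = h\cdot\varphi(g)(\hat{A},\hat{B}),
\]
so the two actions commute.

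The main (and really the only) content of the lemma is condition (iii), which I would establish via the bilinearity and antisymmetry of the commutator:
\[
[a\hat{A}+b\hat{B},\, c\hat{A}+d\hat{B}] = (ad-bc)\,[\hat{A},\hat{B}] = [\hat{A},\hat{B}],
\]
because $\det g = ad-bc = 1$. Taking the upper-left $n \times n$ block of both sides shows $\mu_H(\varphi(g)(\hat{A},\hat{B})) = \mu_H(\hat{A},\hat{B})$, so the fiber $\mu_H^{-1}(\tau\cdot I_n)$ is preserved. There is no real obstacle here; the determinant-one condition is precisely what makes $\mathrm{SL}_2$ (rather than $\GL_2$) the right group to preserve the commutator, and this is exactly how the $\mathrm{SL}_2$-symmetry enters. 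Combining (i)--(iii) and passing to the quotient by $H$ gives the required $\mathrm{SL}_2(\C)$-action on $\ins$.
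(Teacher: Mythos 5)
Your proof is correct, and it verifies the same two substantive facts as the paper: that $\varphi$ commutes with the $\GL_n(\C)$-conjugation and that it preserves the moment-map level set. The only real difference is in how the second point is executed. The paper translates the action on $(\hat{A},\hat{B})$ back into the components $(A,B,v,w)$ via \eqref{equation: hatrep}, writes out how $\mathrm{SL}_2(\C)$ transforms $(A,B)$, $(v_1,v_2)$ and $(w_1,w_2)$ separately, and then checks by an explicit matrix computation that $[A,B]-vw$ is unchanged when $ad-bc=1$. You instead stay with the hatted matrices and use bilinearity and antisymmetry of the commutator to get $[a\hat{A}+b\hat{B},\,c\hat{A}+d\hat{B}]=(ad-bc)[\hat{A},\hat{B}]$ in one line, then restrict to the upper-left $n\times n$ block, which the paper identifies with $[A,B]-vw$. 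Your version is shorter and makes the role of $\det=1$ more transparent; the paper's component-wise computation has the side benefit of exhibiting explicitly how the action looks on the original quadruple $(A,B,v,w)$ (in particular the slightly non-obvious transformation of $(w_1,w_2)$), which is the form in which the variety $\ins$ was originally defined. Your additional checks (i) and (ii) -- that $\mathfrak{s}_{n+1}$ is preserved, so the transformed pair is again of the form \eqref{equation: hatrep}, and that $\varphi$ commutes with the $H$-conjugation -- are exactly the points the paper treats as clear, and your justifications of them are fine.
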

\begin{proof}	
It is clear that $\varphi$ commutes with the $\GL_n(\C)$-action. 
To see that it preserves the condition $[A, B] - vw = \tau \cdot I_n$, we translate this action back to the representation $(A, B, v, w)$. Spelling out the action for each component we get
\begin{align*}
	a \hat{A} + b \hat{B} &= 
	\begin{pmatrix}	aA + b B & a v_1 + b v_2 \\ a w_2 - b w_1 & 0 \end{pmatrix}
	\\
	c \hat{A} + d \hat{B} &= 
	\begin{pmatrix}	cA + d B & c v_1 + d v_2 \\ c w_2 - d w_1 & 0 \end{pmatrix}
\end{align*}
A comparison with the entries in \eqref{equation: hatrep} then shows that $\mathrm{SL}_2(\mathbb{C})$ acts as
\begin{align*}
	(A, B) &\to ( a A + b B, c A + d B) \\
	(v_1, v_2) &\to (a v_1 + b v_2, c v_1 + d v_2 ) \\
	(w_1, w_2) &\to ( d w_1 - c w_2, - b w_1 + a w_2) 
\end{align*}
Therefore
\begin{align*}
	&[A, B] - vw \\
	\mapsto & (ad - bc) [A, B] - \left( \begin{pmatrix} a & b \\ c & d \end{pmatrix} \cdot \begin{pmatrix} v_1 \\ v_2 \end{pmatrix} \right)^T
	\begin{pmatrix} d & -c \\ -b & a \end{pmatrix} \cdot \begin{pmatrix} w_1 \\ w_2 \end{pmatrix}   
\end{align*}
which equals $[A, B] - v w $ by the defining relation of $\mathrm{SL}_2(\C)$. 
\end{proof}

\subsubsection{The function}
The fundamental pair induced by \eqref{Ins-SL2} is 
\begin{align*}
    D &= \hat{A} \frac{\partial}{\partial \hat{B}} = A \frac{\partial}{\partial B} + v_1 \frac{\partial}{\partial v_2} - w_2 \frac{\partial}{\partial w_1} \\
    U &= \hat{B} \frac{\partial}{\partial \hat{A}} = B \frac{\partial}{\partial A} + v_2 \frac{\partial}{\partial v_1} - w_1 \frac{\partial}{\partial w_2}
\end{align*}
Hence the function $f = \tr \hat{A} = \tr A$ is in $\ker D$ and $\deg_U f = 1$. By {\it Theorem\,\ref{criterionADP}} the Calogero--Moser space $N_{n, \tau}, \tau \neq 0$ with rank (at most) two condition has the algebraic density property.

\subsection{Cyclic quiver varieties}
Another natural generalization of $\mathcal{C}_n$ is to allow
$m$-tuples of matrices, and was studied in Chen--Eshmatov--Eshmatov--Tikaradze \cite{MR4418718}.

\subsubsection{Cyclic quiver}
For $m \ge 1$, let $Q_m = ( I_\infty, E_{\infty} )$ be the quiver obtained from the $m$-th extended Dynkin quiver of type $A$ by adding one more vertex and an arrow $a_\infty \colon \infty \to 0$. Let $a_i \colon i+1 \to i$ be the arrows of the extended Dynkin quiver. We have now the vertex set $I_\infty = \{ 0,1,\dots, m-1, \infty \}$ and the edge set $E_\infty = \{ a_0, \dots, a_{m-1}, a_\infty \}$. 
\[
\begin{tikzcd}
     & & \infty  \arrow[d, "a_\infty"] & & \\
     & &  0   \arrow[dddrrr, "a_{m-1}"] & & \\
     & & & & \\
     & & & & \\
     1 \arrow[uuurr, "a_0"] & 2 \arrow[l, "a_1"] & \dots & m-2 & & m-1 \arrow[ll, "a_{m-2}"]
\end{tikzcd}
\]
For $\alpha \in \N^m,  \lambda = (\lambda_i) \in \C^m$, consider the quiver variety $\mathfrak{M}^\eta (Q_m, \beta)$, called the \textbf{cyclic quiver variety}, where $\eta = (- \lambda \cdot \alpha, \lambda)$ and $\beta = (1, \alpha)$. By the choice of parameters we have $\eta \cdot \beta^t = 0$. A point in $\mathfrak{M}^\eta (Q_m, \beta)$ can be presented by the following representation of the double quiver $\bar{Q}_\infty$: 
\[
\begin{tikzcd}
     & & & \C  \arrow[d, "v", shift left] & & & \\
     & & &  \C^{\alpha_0}  \arrow[u, "w", shift left, dashed]  \arrow[dddrrr, "X_{m-1}", shift left]  \arrow[dddlll, "Y_0", shift left, dashed] & & & \\
     & & & & & & \\
     & & & & & & \\
     \C^{\alpha_1} \arrow[uuurrr, "X_0", shift left] \arrow[rr, "Y_1", shift left, dashed] & & \C^{\alpha_2} \arrow[ll, "X_1", shift left] & \dots & \C^{\alpha_{m-2}} \arrow[rr, "Y_{m-1}", shift left, dashed] & & \C^{\alpha_{m-1}} \arrow[ll, "X_{m-2}", shift left] \arrow[uuulll, "Y_{m-1}", shift left, dashed]
\end{tikzcd}
\]
More precisely, a point in the cyclic quiver variety is represented by a $(2m+2)$-tuple $(X_i, Y_i, v, w)$, where $X_i = X_{\alpha_i}, Y_i = X_{\alpha^*_i}$ for $i = 0, 1, \dots, m-1$ and $v = X_{a_\infty}, w = X_{a^*_\infty}$, that satisfy the relations
\begin{align*}
    X_0 Y_0 - Y_{m-1} X_{m-1} + v w &= \lambda_0 I_{\alpha_0} \\
    X_i Y_i - Y_{i-1} X_{i-1} &= \lambda_i I_{\alpha_i}, \quad i =1, \dots, m-1 \\
    w v &=  \lambda \cdot \alpha
\end{align*}
The reductive group $
    G(\beta) =  ( \prod_{i =0}^{m-1} \GL_{\alpha_i}(\C)  ) / \C^\ast$
    acts by
\begin{align*}
    (g_0, \dots, g_{m-1}) . (X_i, Y_i, v, w) = ( g_i X_i g_{i+1}^{-1}, g_{i+1}Y_i g_i^{-1}, g_0 v, w g_0^{-1} )
\end{align*}
A convenient shorthand is to use block matrices 
\begin{align} \label{bigmatrix}
    X = 
    \begin{pmatrix}
        0 & X_0 & 0 &  \dots & 0 \\
        0 & 0 & X_1 &  \dots & 0 \\
        0 & 0 & 0  &  \ddots & \vdots \\
        \vdots & \vdots & \ddots &  \ddots & X_{m-2} \\
        X_{m-1} & 0 & \dots & 0 & 0 
    \end{pmatrix}, \quad
    Y = 
    \begin{pmatrix}
        0 & 0 & 0 &  \dots & Y_{m-1} \\
        Y_0 & 0 & 0 &  \dots & 0 \\
        0 & Y_1 & 0  &  \ddots & \vdots \\
        \vdots & \vdots & \ddots &  \ddots & 0 \\
        0 & 0 & \dots & Y_{m-2} & 0 
    \end{pmatrix}
\end{align}
and set
\begin{align*}
    \mathbf{v} &= ( v^t, 0_{\alpha_1}, \dots, 0_{\alpha_{m-1}})^t, \,
    \mathbf{w}= ( w, 0_{\alpha_1}, \dots, 0_{\alpha_{m-1}}) \\
    \Lambda &= \mathrm{Diag} (\lambda_0 I_{\alpha_0}, \dots, \lambda_{m-1} I_{\alpha_{m-1}} )
\end{align*}
Then the previous relations can be more neatly expressed as $XY - YX + \mathbf{v} \mathbf{w}= \Lambda$ and $\mathbf{w}\mathbf{v}=- \lambda \cdot \alpha$. Hence a quadruple $(X, Y, \mathbf{v}, \mathbf{w})$ satisfying these relations presents a point in the cyclic quiver variety $\mathfrak{M}^\eta(Q_m, \beta)$. 

From the choice $\eta = (- \lambda \cdot \alpha, \lambda)$ and $\beta = (1, \alpha)$ we have $\eta \cdot \beta^t = 0$. In this case the dimension of the cyclic quiver variety is given by $2 \alpha_0 - \sum_{i = 0}^{m-1}(\alpha_i -\alpha_{i+1})^2$, see \cite{MR4418718}*{Lemma 7}. 

\begin{example}
    For $\alpha = (n, \dots, n) \in \N^m$, then we have 
    \[
        \beta = (1, n, \dots, n), \quad \eta = ( - n \sum_{i=0}^{m-1} \lambda_i, \lambda_0, \dots, \lambda_{m-1} )
    \]
    and the cyclic quiver variety $\mathfrak{M}^\eta(Q_m,\beta)$ is called the $n$-th Calogero--Moser space $\mathcal{C}^\eta_n(Q_m)$ associated with $Q_m$, which is a symplectic smooth affine variety of dimension $2n$, see Etingof--Ginzburg \cite{MR1881922}. In particular, when $m=1$ this variety is simply the Calogero--Moser space $\mathcal{C}_n$ in {\it Section\,\ref{sec: CaMo}}. 
\end{example}

\begin{remark}
    Given $\eta \in \C^I$ with $\eta_i \neq 0$ and $\beta = (1, \alpha)$ with $\eta \cdot \beta^t = 0$, the cyclic quiver variety $\mathfrak{M}^\eta(Q_m, \beta)$ associated to $Q_m$ is actually isomorphic to the Calogero--Moser space $\mathcal{C}^{w(\eta)}_n(Q_m)$ associated to $Q_m$, where $w \in W_\infty$ satisfying $w(\beta) = (1, n, \dots, n)$ and $2n$ is the dimension of $\mathfrak{M}^\eta(Q_m, \beta)$. Here $W_\infty$ is the subgroup of the Weyl group of $Q_m$ fixing $\infty$. For more details see \cite{MR1834739}*{Corollary 14}.  
\end{remark}

\subsubsection{Transitive $\mathrm{SAut}$-action}
Let $(X, Y, \mathbf{v}, \mathbf{w})$ represent a point in $ \mathfrak{M}^\eta(Q_m,\beta)$. The subgroup $G$ of $\mathrm{SAut}( \mathfrak{M}^\eta(Q_m,\beta))$ generated by 
\begin{align*}
    (X, Y, \mathbf{v}, \mathbf{w}) &\mapsto (X+ t Y^{km-1}, Y, \mathbf{v}, \mathbf{w}) \\
    (X, Y, \mathbf{v}, \mathbf{w}) &\mapsto (X, Y + t X^{km-1}, \mathbf{v}, \mathbf{w})
\end{align*}
$t \in C, k \in \N$, acts transitively on $ \mathfrak{M}^\eta(Q_m,\beta)$, see \cite{MR1834739}*{Theorem 2}.

\subsubsection{The $\mathrm{SL}_2$-action}
For $m = 2$ the action is the following:
\begin{align} \label{SL2-Qm}
    \begin{pmatrix}
        a & b \\ c & d
    \end{pmatrix}
    \cdot
    (X, Y, \mathbf{v}, \mathbf{w}) = (aX + b Y, c X + dY, \mathbf{v}, \mathbf{w})
\end{align}
under which the relations $[X,Y] + \mathbf{v} \mathbf{w}= \Lambda$ and $\mathbf{w}\mathbf{v}=- \lambda \cdot \alpha$ are invariant thanks to the determinant condition $ad-bc=1$.

\subsubsection{The function}
It remains to find a function in $A_1$ or $A_2$. The fundamental pair induced by the $\mathrm{SL}_2$-action \eqref{SL2-Qm} is 
\begin{align*}
    D = X \frac{\partial}{\partial Y}, \quad
    U = Y \frac{\partial}{\partial X}
\end{align*}
Let us take the function 
\begin{align*}
    f = \tr XY = \tr (X_0 Y_0) + \tr (X_1 Y_1) 
\end{align*} 
Then  
\begin{align*}
    D f = \tr X^2 = 2 \tr (X_0 X_1 ), \quad Uf = \tr Y^2 =  \tr (Y_{1} Y_0)
\end{align*}
and $D(D(f)) = U(U(f)) = 0$. Thus $Df$ is in $\ker D$ and $\deg_U f = 2$, namely $D f \in A_2 = (\ker D)_2$. By {\it Theorem\,\ref{criterionADP}} the smooth cyclic quiver variety $\mathfrak{M}^\eta(Q_2,\beta)$ enjoys the algebraic density property.
\medskip

For $m \ge 3$ we do not know if there is an $\SL_2$-action on $\mathfrak{M}^\eta(Q_m,\beta)$. Since its algebraic automorphism group is large enough to admit transitive action, it is natural to ask the following.

\begin{question}
    Does the smooth cyclic quiver variety $\mathfrak{M}^\eta(Q_m,\beta)$ with $m \ge 3$ have the algebraic density property?
\end{question}


\appendix 
\section{Proofs for {\it Section \ref{fund-pair}}} \label{proof-fund-pair}

\subsection{Proofs for Theorem\,\ref{main1} and Theorem\,\ref{main2}} The proofs for these theorems consist in showing the following sequence of lemmas.

\cite{Freudenburg2022} gives the identity:
\begin{equation}\label{identity}
D^mU^n=D^{m-1}U^{n-1}(UD+nE-n(n-1)I) \quad ,\,\, m,n\ge 1
\end{equation}
By induction, we obtain
\begin{equation}\label{identity2}
D^nU^nf=n!\,p_n(E)f \quad {\rm for\,\, all}\quad n\ge 1 \,\, {\rm and}\,\, f\in A
\end{equation}
where the polynomials $p_n(x)$ are defined as follows.
\begin{definition} {\rm Given the integer $n\ge 1$, define $p_n(x),q_n(x)\in\Z [x]$ by:}
\[
p_n(x)=\prod_{1\le i\le n}(x-(i-1)) \quad {\rm and}\quad q_n(x)=\prod_{1\le i\le n}(x+(i-1))
\]
\end{definition}
We have further relations
\[
[D^mU^n,E]=2(n-m)D^mU^n  \quad {\rm and}\quad [U^nD^m,E]=2(n-m)U^nD^m \quad {\rm for\,\, all} \quad m,n\ge 0
\]
which follow by induction using equation \eqref{identity} and the base cases 
\[
    [D^m, E] = -2m D^m, \quad [U^n, E] = 2n U^n, \quad [DU, E] = 0
\]
where the last identity follows by evaluating on homogeneous elements. In particular, $[D^nU^n,E]=[U^nD^n,E]=0$ for all $n\ge 0$. 

Let $\mathcal{S},\mathcal{T}\subset {\rm End}_k(B)$ be the commutative subalgebras generated by $\{ DU,E\}$ and $\{ UD,E\}$, respectively.
Let $B'\subseteq B$ be the subalgebra generated by $\{\krn (E-dI)\, |\,d\in \Z\}$.

\begin{lemma}\label{subalgebra} For all $n\in\N$, $U^nD^n\in\mathcal{S}$ and $D^nU^n\in\mathcal{T}$.
\end{lemma}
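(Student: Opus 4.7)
My plan is to prove both statements simultaneously by induction on $n$, using the identity \eqref{identity} and its $(D,U)$-swapped analogue as the workhorse. Before starting the induction I would first observe that $\mathcal{S}$ and $\mathcal{T}$ are genuinely commutative subalgebras of $\mathrm{End}_k(B)$: indeed, the identity $[U^nD^m,E]=2(n-m)U^nD^m$ specializes at $n=m=1$ to $[UD,E]=0$, and similarly $[DU,E]=0$, so in each case the two named generators commute, making the subalgebra they generate a commutative polynomial algebra in two commuting endomorphisms.

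The base case $n=0$ is trivial: $D^0U^0=U^0D^0=I$, which lies in every unital subalgebra. For the inductive step in the claim $D^nU^n\in\mathcal{T}$, I would set $m=n$ in \eqref{identity} to obtain
\[
D^nU^n=D^{n-1}U^{n-1}\bigl(UD+nE-n(n-1)I\bigr).
\]
By the inductive hypothesis, $D^{n-1}U^{n-1}\in\mathcal{T}$, and the second factor is manifestly in $\mathcal{T}$; since $\mathcal{T}$ is a subalgebra, the product lies in $\mathcal{T}$, completing the induction.

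For the parallel claim $U^nD^n\in\mathcal{S}$, I would invoke the symmetry of the fundamental pair axioms: the pair $(U,D)$ satisfies the same Lie relations as $(D,U)$ but with commutator $[U,D]=-E$ in place of $E$. Consequently the identity \eqref{identity} has a mirror form
\[
U^mD^n=U^{m-1}D^{n-1}\bigl(DU-nE-n(n-1)I\bigr),
\]
and setting $m=n$ gives the analogous decomposition
\[
U^nD^n=U^{n-1}D^{n-1}\bigl(DU-nE-n(n-1)I\bigr),
\]
whose right-hand side lies in $\mathcal{S}$ by induction, finishing the argument.

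There is no real obstacle here: the only point that requires a moment's care is confirming that the mirror form of identity \eqref{identity} holds with the correct sign convention on $E$. This is a mechanical consequence of the $\mathfrak{sl}_2$-symmetry and could, if desired, be verified by repeating the original derivation of \eqref{identity} with $D$ and $U$ interchanged, replacing $E$ by $-E$ throughout.
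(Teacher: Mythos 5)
Your proof is correct and follows essentially the same route as the paper: induction on $n$ using identity \eqref{identity} with $m=n$ for the claim $D^nU^n\in\mathcal{T}$, and the $(D,U)\leftrightarrow(U,D)$, $E\mapsto -E$ symmetry for the claim $U^nD^n\in\mathcal{S}$. The only difference is that you make the mirror identity and the sign convention explicit where the paper simply says ``by symmetry,'' which is a harmless elaboration.
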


\begin{proof} For $n=1$ we have $DU=E+UD\in\mathcal{T}$. For $n\ge 2$ assume that $D^{n-1}U^{n-1}\in\mathcal{T}$. Then by equation (\ref{identity}) we see that 
\[
D^nU^n=D^{n-1}U^{n-1}(UD+nE-n(n-1)I)\in\mathcal{T}
\]
It follows by induction that $D^nU^n\in\mathcal{T}$ for all $n\ge 0$. The corresponding statement for $\mathcal{S}$ follows by symmetry.
\end{proof}

\begin{lemma}\label{sum-decomp} Given the integer $n\ge 1$ and distinct $a_1,\hdots ,a_n\in\Z$,
\[
\krn\prod_{i=1}^n(E-a_iI)=\bigoplus_{i=1}^n\krn (E-a_iI)
\]
as $B_0$-modules. Moreover, $B'=\bigoplus_{ d \in\Z}\krn (E- d I)$ is a $\Z$-grading of $B'$. 
\end{lemma}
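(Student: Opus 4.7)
The plan is to treat this as a standard fact about a commuting family of endomorphisms with a polynomial identity, then verify the extra structure. I will argue by induction on $n$, but the cleanest route is to give the full partial-fractions argument in one shot.

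First I would establish directness. Suppose $f_1+\cdots+f_n=0$ with $f_i\in\krn(E-a_iI)$. For each fixed $i$, apply the operator $\prod_{j\ne i}(E-a_jI)$ (which makes sense since all $E-a_jI$ commute). On $f_j$ with $j\ne i$ this vanishes because $(E-a_jI)$ is one of the factors; on $f_i$ it acts as multiplication by the nonzero scalar $\prod_{j\ne i}(a_i-a_j)$. Since $k$ has characteristic zero and the $a_j$ are distinct, this scalar is invertible, forcing $f_i=0$.

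Next I would show that every element of $\krn\prod_i(E-a_iI)$ lies in the sum. Working in $k[x]$, the polynomials $\prod_{j\ne i}(x-a_j)$, $1\le i\le n$, have no common zero, so by partial fractions there exist $c_1,\dots,c_n\in k$ with
\[
1=\sum_{i=1}^n c_i\prod_{j\ne i}(x-a_j).
\]
Given $f\in\krn\prod_i(E-a_iI)$, set $f_i:=c_i\prod_{j\ne i}(E-a_jI)(f)$. Then $\sum_i f_i=f$ by the polynomial identity, and $(E-a_iI)f_i=c_i\prod_j(E-a_jI)(f)=0$, so $f_i\in\krn(E-a_iI)$. The $B_0$-module structure is preserved since each $E-a_iI$ is $B_0$-linear: for $b\in B_0=\krn E$ and $g\in B$ one has $E(bg)=bEg$, hence $(E-a_iI)(bg)=b(E-a_iI)g$, so $\krn(E-a_iI)$ is a $B_0$-submodule.

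Finally I address the grading of $B'$. The first part, applied to any finite subset of $\Z$, shows that the sum $\sum_{d\in\Z}\krn(E-dI)$ is direct, so $B'=\bigoplus_{d\in\Z}\krn(E-dI)$ as a $B_0$-module (hence in particular as a $k$-vector space). For compatibility with multiplication, if $f\in\krn(E-cI)$ and $g\in\krn(E-dI)$, then the Leibniz rule for the $k$-derivation $E$ gives
\[
E(fg)=(Ef)g+f(Eg)=cfg+dfg=(c+d)fg,
\]
so $fg\in\krn(E-(c+d)I)$. Thus the decomposition respects products, making $B'$ a $\Z$-graded $k$-algebra.

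The only delicate point is the use of partial fractions, which requires that the scalars $\prod_{j\ne i}(a_i-a_j)$ be invertible in $k$; this is fine because $k$ has characteristic zero and the $a_i$ are distinct integers. Beyond that, the argument is routine.
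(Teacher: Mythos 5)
Your proof is correct, but it takes a genuinely different route from the paper. You prove the decomposition in one shot via the Lagrange/partial-fractions identity $1=\sum_i c_i\prod_{j\ne i}(x-a_j)$ with $c_i=1/\prod_{j\ne i}(a_i-a_j)$, which yields explicit $B_0$-linear projection operators $f\mapsto c_i\prod_{j\ne i}(E-a_jI)f$ onto the summands; directness follows from the same computation. The paper instead argues by induction on $n$: it sets up the sequence $0\to\krn(E-a_nI)\to\krn\prod_{i=1}^{n}(E-a_iI)\xrightarrow{\ \pi\ }\krn\prod_{i=1}^{n-1}(E-a_iI)\to 0$ with $\pi(f)=(E-a_nI)f$, uses the inductive decomposition to see that $\pi$ is surjective and that the inclusion $j$ of the smaller kernel is a section (since $\pi j$ acts by the nonzero scalars $-(a_n-a_i)$ on each summand), and concludes that the sequence splits. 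Both arguments ultimately rest on the invertibility of the differences $a_i-a_j$ in $k$, which you correctly flag as the only delicate point. Your version is the standard simultaneous-eigenspace argument (essentially the Chinese Remainder Theorem for $k[x]/\prod_i(x-a_i)$ applied to the commutative algebra generated by $E$) and has the advantage of producing the projections explicitly; the paper's version is more elementary in that it avoids invoking the interpolation identity. Your treatment of the grading of $B'$ matches the paper's: directness over finite subsets plus closure of $\sum_d\krn(E-dI)$ under multiplication via the Leibniz rule identifies $B'$ with the direct sum.
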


\begin{proof} We proceed by induction on $n$, the case $n=1$ being clear. 
For $n\ge 2$, consider the sequence
\[
0\to \krn (E-a_nI)\xrightarrow{i}\krn\prod_{i=0}^n(E-a_iI)\xrightarrow{\pi}\krn\prod_{i=0}^{n-1}(E-a_iI)\to 0
\]
where $i$ is the inclusion and $\pi (f)=(E-a_nI)f$. Let 
\[
    j \colon \krn\prod_{i=0}^{n-1}(E-a_iI)\to\krn\prod_{i=0}^n(E-a_iI)
\]
be the inclusion. 
By the inductive hypothesis, we have the equality of $A_0$-modules:
\[
\krn\prod_{i=1}^{n-1}(E-a_iI)=\bigoplus_{i=1}^{n-1}\krn (E-a_iI)
\]
Given $f\in \krn (E-a_if)$ for $i<n$ we have:
\[
 \pi (f)=(E-a_nI)f=(E-a_iI)f-(a_n-a_i)f=-(a_n-a_i)f
 \]
 Therefore, $\pi$ is surjective, and the sequence is exact. 
Suppose that $f\in \krn (E-a_iI)\cap\krn\pi$ for some $i<n$. Then:
\[
 0=\pi (f)=(E-a_nI)f=(E-a_iI)f-(a_n-a_i)f=-(a_n-a_i)f \implies f=0
 \]
Therefore, $\pi j$ is bijective and $j$ is a section, so the sequence splits. 
The fact that
\[
B'=\bigoplus_{d\in\Z}\krn (E-dI)
\]
as $B_0$-modules now follows by induction.

Given $d,e\in\Z$ choose $f\in\krn (E-dI)$ and $g\in\krn (E-eI)$. Then
\[
E(fg)=fEg+gEf=f(eg)+g(df)=(d+e)(fg)
\]
which implies $fg\in\krn (E-(d+e)I)$. Therefore, the given decomposition is a $\Z$-grading of $B'$.
\end{proof} 

\begin{lemma}\label{min-poly} Given nonzero $f\in A$, $p_{d+1}(E)f=0$ where $d=\deg_Uf$. Given nonzero $h\in\Omega$, $q_{e+1}(E)h=0$ where $e=\deg_Dh$. 
\end{lemma}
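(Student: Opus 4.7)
The plan is to derive both statements directly from the operator identity~\eqref{identity2}, namely $D^nU^nf=n!\,p_n(E)f$ for $n\ge 1$ and $f\in A$.

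For the first statement, suppose $f\in A$ is nonzero with $d=\deg_U f$. Then $U^{d+1}f=0$ by definition of the $U$-degree, so $D^{d+1}U^{d+1}f=0$ as well. Applying \eqref{identity2} with $n=d+1$, I would obtain $(d+1)!\,p_{d+1}(E)f=0$. Since $k$ has characteristic zero, the factor $(d+1)!$ is invertible, yielding $p_{d+1}(E)f=0$.

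For the second statement, the cleanest route is by the symmetry that interchanges $D$ and $U$: the pair $(U,D)$ is again a fundamental pair, but its bracket is $[U,D]=-E$. Running the derivation of \eqref{identity2} verbatim with $(D,U,E)$ replaced by $(U,D,-E)$ gives $U^nD^nh=n!\,p_n(-E)h$ for $n\ge 1$ and $h\in\Omega=\ker U$. Now observe
\[
p_n(-E)=\prod_{i=1}^n(-E-(i-1)I)=(-1)^n\prod_{i=1}^n(E+(i-1)I)=(-1)^n q_n(E),
\]
so $U^nD^nh=(-1)^n n!\,q_n(E)h$. For nonzero $h\in\Omega$ with $e=\deg_D h$, we have $D^{e+1}h=0$, hence $U^{e+1}D^{e+1}h=0$, which gives $(-1)^{e+1}(e+1)!\,q_{e+1}(E)h=0$, and therefore $q_{e+1}(E)h=0$.

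The argument is essentially a one-line corollary of the already-established identity \eqref{identity2} together with its symmetric counterpart, so there is no substantial obstacle. The only point worth checking carefully is the sign bookkeeping when swapping the roles of $D$ and $U$, which is what produces the conversion $p_n(-E)=(-1)^n q_n(E)$ and thereby the two different minimal-polynomial shapes on $A$ versus $\Omega$.
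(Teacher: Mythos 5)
Your proof is correct and follows essentially the same route as the paper: the first claim is exactly the content of equation \eqref{identity2} applied with $n=d+1$ (the paper merely re-derives that identity inline), and the second claim is obtained, as in the paper, ``by symmetry of $D$ with $U$'' --- you simply make that symmetry explicit via $[U,D]=-E$ and the sign conversion $p_n(-E)=(-1)^nq_n(E)$, which is a welcome clarification but not a different argument.
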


\begin{proof} 
By equation (\ref{identity}), we have:
\[
D^{d+1}U^{d+1}=D^dU^d(UD+(d+1)E-(d+1)dI)
\]
Applying this to $f$ gives $0=D^{d+1}U^{d+1}f=(d+1)D^dU^d(E-dI)f$, and by repeated application, $0=D^{d+1}U^{d+1}f=(d+1)!\,p_{d+1}(E)f$. The corresponding statement for $\Omega$ holds by symmetry of $D$ with $U$. 
\end{proof}

\begin{lemma}\label{ker-image} $\Omega\cap DB=\{ 0\}$
\end{lemma}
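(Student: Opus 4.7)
The plan is to derive a contradiction from $UDf = Uh = 0$ by showing that this forces $f$ to lie in a direct sum of $E$-eigenspaces with only non-negative eigenvalues, which is incompatible with $h = Df \in \Omega$ having only non-positive $E$-eigenvalues. So suppose for contradiction $0 \neq h = Df$ with $Uh = 0$, and set $n = \deg_U f$.

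First I would establish, by induction on $k \geq 1$, the identity
\[
D U^k f \;=\; k\,(E + (k-1)I)\, U^{k-1} f .
\]
The base case $DUf = Ef$ is immediate from $DU - UD = E$ and $UDf = 0$. For the inductive step, $DU^{k+1}f = UDU^k f + E U^k f$ (using $DU = UD + E$), and the commutation $UE = EU + 2U$ (from $[U,E] = 2U$) lets one push a factor of $U$ past $E$ so that the inductive hypothesis applies. Taking $k = n+1$ and using $U^{n+1}f = 0$ forces $(n+1)(E + nI) U^n f = 0$, so $U^n f \in B_{-n}$.

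Next I would observe that identity~\eqref{identity2}, $D^n U^n g = n!\,p_n(E)\,g$, while stated in the appendix for $g \in \ker D$, actually holds under the weaker hypothesis $UDg = 0$. The inductive derivation via~\eqref{identity} goes through provided one checks that $(E - (n-1)I)g$ also satisfies $UD((E - (n-1)I)g) = 0$ whenever $UDg = 0$ and $Uh = 0$; this follows from $DE = (E - 2I)D$ together with $UE = EU + 2U$ applied to the expansion $UD(E - (n-1)I)g = U(E - (n+1)I)h$. Applying this to $g = f$ and combining with the previous step yields
\[
n!\,p_n(E)\,f \;=\; D^n U^n f \;=\; D^n(U^n f) \;\in\; D^n B_{-n} \;\subseteq\; B_n,
\]
so $(E - nI)\,p_n(E)\,f = p_{n+1}(E)\,f = 0$. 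By Lemma~\ref{sum-decomp}, $f$ lies in $\bigoplus_{j=0}^n B_j$.

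Consequently $h = Df$ lies in $\bigoplus_{j=2}^{n+2} B_j$, placing every nonzero $E$-eigencomponent of $h$ at weight $\geq 2$. On the other hand, Lemma~\ref{min-poly} applied to $h \in \Omega$ gives $q_{e+1}(E) h = 0$ with $e = \deg_D h$, so by Lemma~\ref{sum-decomp} again $h \in \bigoplus_{j=0}^e B_{-j}$, i.e.\ every nonzero $E$-eigencomponent of $h$ is at weight $\leq 0$. Since the weight ranges $\{2,\dots,n+2\}$ and $\{0,-1,\dots,-e\}$ are disjoint, the uniqueness of the direct-sum decomposition in Lemma~\ref{sum-decomp} forces $h = 0$, contradicting $h \neq 0$. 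The main obstacle in this plan, in my view, is guessing the correct recurrence $D U^k f = k(E + (k-1)I) U^{k-1} f$ and recognizing that identity~\eqref{identity2} persists under the weaker hypothesis $UDg = 0$; once these are in place, the weight-incompatibility argument is immediate from the two lemmas already established in the appendix.
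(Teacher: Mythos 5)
Your proof is correct and follows essentially the same route as the paper's: both hinge on observing that identity~\eqref{identity2} persists under the weaker hypothesis $UDf=0$ (equivalently $Df\in\Omega$), and then playing the resulting non-negative $E$-weights annihilating $f$ against the non-positive weights forced on $h=Df\in\Omega$ by Lemma~\ref{min-poly} --- the paper phrases this final clash as coprimality of $q_e(x)$ and $p_n(x-2)$ rather than as disjointness of eigenspace supports via Lemma~\ref{sum-decomp}, which is the same fact. Your Step~1 recurrence $DU^kf=k(E+(k-1)I)U^{k-1}f$ is a correct but unnecessary detour: choosing $n$ with $U^nf=0$ gives $p_n(E)f=0$ directly from $0=D^nU^nf=n!\,p_n(E)f$, which already confines $f$ to the non-negative weight spaces.
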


\begin{proof} 
Let $h\in \Omega\cap DB$ be given. Assume that $h\ne 0$. Choose $f\in B$ with $h=Df$, noting that $f\ne 0$. 
By {\it Lemma\,\ref{min-poly}}, we have $q_e(E)h=0$. We claim that, for all $n\ge 1$:
\[
D^nU^nf=n!\,p_n(E)f
\]
The case $n=1$ follows from:
\[
 DUf=(E+UD)f=Ef+Uh=Ef=p_1(E)f
 \]
 Assume $n\ge 2$ and $D^{n-1}U^{n-1}f=(n-1)!\,p_{n-1}(E)f$. By equation (\ref{identity}) we have
 \[
 D^nU^nf=D^{n-1}U^{n-1}(UD+nE-n(n-1)I)f=n D^{n-1}U^{n-1} (E-(n-1)I)f
 \]
 and since $[D^{n-1}U^{n-1},E]=0$ we obtain:
\[
D^nU^nf=n(E-(n-1)I) D^{n-1}U^{n-1} f=n(E-(n-1)I)(n-1)!\,p_{n-1}(E)f=n!\,p_n(E)f
\]
So the claim is proved by induction on $n$. Choose $n\ge 1$ so that $U^nf=0$. Then:
\[
0=D^nU^nf=n!\, p_n(E)f \implies 0=Dp_n(E)f=p_n(E-2I)Df=p_n(E-2I)h
\]
Since the roots of $q_e(x)$ are $\{ 0,-1,\hdots ,-e\}$ and the roots of $p_n(x-2)$ are $\{ 2,3,\hdots ,n+1\}$ we see that
$\gcd_{\Q}(q_e(x),p_n(x-2))=1$. Since $q_e(E)h=p_n(E-2I)h=0$ we conclude that $h=0$, which gives a contradiction. 

Therefore, $\Omega\cap DB=\{ 0\}$.
\end{proof}

\begin{lemma}\label{critical} For all integers $n\ge 1$, $\krn U^n\cap {\rm im}\,D^n=\{ 0\}$.
\end{lemma}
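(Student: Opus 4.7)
The plan is to induct on $n$, with the base case $n=1$ being exactly {\it Lemma\,\ref{ker-image}} ($\Omega\cap DB=\{0\}$). Assume the statement for $n-1$ and take $f = D^n g \in \krn U^n\cap {\rm im}\,D^n$; the goal is to show $f=0$.

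The reduction is to apply $U$ once. Automatically $U^{n-1}(Uf) = U^n f = 0$, and the key observation is that $Uf$ still lies in ${\rm im}\,D^{n-1}$. This follows from the mirror of \eqref{identity} obtained by swapping $D\leftrightarrow U$ and $E\leftrightarrow -E$: using the relations $[U,D]=-E$ and $[E,D]=-2D$, a routine induction on $n$ (of the same shape as the one used in {\it Lemma\,\ref{min-poly}}) gives the operator identity
\[
UD^n \;=\; D^{n-1}\bigl(DU - nE - n(n-1)I\bigr).
\]
Applying this to $g$ yields
\[
Uf \;=\; UD^n g \;=\; D^{n-1}\bigl(D(Ug) - nEg - n(n-1)g\bigr) \;\in\; {\rm im}\,D^{n-1}.
\]

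Combining, $Uf \in \krn U^{n-1}\cap {\rm im}\,D^{n-1}$, so the inductive hypothesis forces $Uf=0$, i.e., $f\in\Omega$. Since $f = D^n g \in DB$ for $n\ge 1$, the base case gives $f \in \Omega\cap DB = \{0\}$, so $f = 0$ and the induction closes.

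The only real computation is the commutation identity displayed above, but it is simply the $D\leftrightarrow U$ dual of the identity already recorded as \eqref{identity}, so no genuinely new ingredient is needed. Conceptually the proof is the natural ``peel off one $U$ at a time, factor out a $D$'' reduction from the $n$-th case to the $(n-1)$-st, grounded in the base case {\it Lemma\,\ref{ker-image}}.
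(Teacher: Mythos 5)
Your proof is correct, and it takes a genuinely different route from the paper's. The paper argues by contradiction: it picks $m$ minimal with $U^mD^nf=0$, invokes {\it Lemma\,\ref{subalgebra}} to write $U^{m-1}D^{m-1}$ as a polynomial $P(DU,E)$, and then massages $P(DU,E)(D^{n-m+1}f)$ into an element of $\Omega\cap DB=\{0\}$ to contradict minimality. You instead run a clean induction on $n$, peeling off one $U$ via the left-factorization $UD^n=D^{n-1}(\cdots)$, which reduces $\krn U^n\cap\mathrm{im}\,D^n$ to $\krn U^{n-1}\cap\mathrm{im}\,D^{n-1}$ and then finishes with the base case {\it Lemma\,\ref{ker-image}} applied to $f$ itself. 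Your displayed identity $UD^n=D^{n-1}(DU-nE-n(n-1)I)$ is correct (I checked it for $n=1,2$ and the induction closes), and in fact your argument only needs the weaker fact that $UD^n$ factors through left multiplication by $D^{n-1}$, which already follows from $UD=DU-E$ and $[D^m,E]=-2mD^m$. One small slip: you quote the relation as $[E,D]=-2D$, whereas the fundamental-pair relation $[D,E]=-2D$ gives $[E,D]=+2D$; this does not affect the validity of the displayed identity or of the proof, since the precise coefficients of $E$ and $I$ are never used. Your argument is shorter and avoids the machinery of {\it Lemma\,\ref{subalgebra}} entirely, at the modest cost of recording one more commutation identity dual to \eqref{identity}.
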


\begin{proof} The case $n=1$ is proved in {\it Lemma\,\ref{ker-image}}. Assume that $n\ge 2$. 
Let $h\in \krn U^n\cap {\rm im}\,D^n$ and let $f\in B$ be such that $h=D^nf$. Then $U^nD^nf=0$.

Suppose that $D^nf\ne 0$. Let $m\in\N$ be minimal so that $U^mD^nf=0$, noting that $m\le n$. We have:
\[
U^{m-1}D^{m-1}(D^{n-m+1}f)\in\Omega
\]
By {\it Lemma\,\ref{subalgebra}} there exists $P(x,y)\in k[x,y]\cong k^{[2]}$ such that $U^{m-1}D^{m-1}=P(DU,E)$. Therefore:
\[
P(DU,E)(D^{n-m+1}f)\in\Omega
\]
Because $DU$ commutes with $E$ we can write 
\[
P(DU,E)(D^{n-m+1}f)=Dg+P(0,E)D^{n-m+1}f
\]
for some $g\in B$. Since $ED=D(E+2I)$ we see that:
\[
 P(0,E)D^{n-m+1}=D^{n-m+1}P(0,E+2(n-m+1)I)
 \]
 Therefore, 
 \[
 U^{m-1}D^{m-1}(D^{n-m+1}f)=Dg+D^{n-m+1}P(0,E+2(n-m+1)I)f\in \Omega\cap DB=\{ 0\}
 \]
 which implies $U^{m-1}D^nf=0$, contradicting minimality of $m$. So $h=D^nf=0$.
\end{proof}

Recall that $I_n=A\cap D^nB$ is the $n$-th image $A$-ideal for $D$.
\begin{lemma}\label{A-grading} Given $d\in\N$ let $A_d=A\cap\krn (E-dI)$. Then 
\begin{itemize}
\item [{\bf (a)}] $A=\bigoplus_{d\ge 0}A_d$ is an $\N$-grading.
\item [{\bf (b)}] Given nonzero $f\in A_d$, $d=\deg_Uf$.
\item [{\bf (c)}] Given $n\in\N$, $I_n=\bigoplus_{d\ge n}A_d$. 
\end{itemize}
\end{lemma}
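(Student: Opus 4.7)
I would establish the three parts in sequence, relying on the identity \eqref{identity2} ($D^nU^nf = n!\,p_n(E)f$ for $f\in A$) together with {\it Lemma\,\ref{critical}} and its analogue obtained by swapping the roles of $D$ and $U$ in the fundamental pair.

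For part (a), I would first restrict the $\Z$-grading of $B$ provided by {\it Theorem\,\ref{main1}(c)} to $A$. Since $[D,E]=-2D$, the operator $D$ carries $B_d$ into $B_{d+2}$; thus if $f=\sum f_d$ is the $E$-eigenspace decomposition of some $f\in A$, then $0=Df=\sum Df_d$ has summands in distinct eigenspaces, forcing $Df_d=0$ for every $d$ and hence $f_d\in A\cap B_d=A_d$. This gives $A=\bigoplus_{d\in\Z}A_d$. To see that in fact $d\ge 0$ for every nonzero $A_d$, I would invoke {\it Lemma\,\ref{min-poly}}: if $0\ne f\in A_d$ has $U$-degree $e$, then $0=p_{e+1}(E)f=p_{e+1}(d)f$, so $d\in\{0,1,\dots,e\}$; in particular $d\ge 0$.

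For part (b), fix nonzero $f\in A_d$. Equation \eqref{identity2} gives $D^nU^nf = n!\,p_n(d)\,f$. For $1\le n\le d$ the factor $p_n(d)=d(d-1)\cdots(d-n+1)$ is nonzero, so $U^nf\ne 0$; thus $\deg_U f\ge d$. For $n=d+1$ we have $p_{d+1}(d)=0$, so $D^{d+1}U^{d+1}f=0$, i.e.\ $U^{d+1}f\in \krn D^{d+1}\cap {\rm im}\,U^{d+1}$. Applying {\it Lemma\,\ref{critical}} to the swapped fundamental pair $(U,D)$ yields $\krn D^{d+1}\cap {\rm im}\,U^{d+1}=\{0\}$, so $U^{d+1}f=0$ and $\deg_U f\le d$. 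Combining the two bounds gives $\deg_U f=d$.

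For part (c), the inclusion $\bigoplus_{d\ge n}A_d\subseteq I_n$ is immediate from \eqref{identity2}: if $f\in A_d$ with $d\ge n$, then $p_n(d)\ne 0$ and $f=(n!\,p_n(d))^{-1}D^nU^nf\in A\cap D^nB=I_n$. For the reverse inclusion, note that $I_n=D^n\mathcal{F}_n$, since any $h\in B$ with $D^nh\in A$ automatically lies in $\mathcal{F}_n=\krn D^{n+1}$. Using the splitting $\mathcal{F}_n=\mathcal{F}_{n-1}\oplus U^nD^n(\mathcal{F}_n)$ from {\it Theorem\,\ref{main1}(b)} together with $D^n\mathcal{F}_{n-1}=0$, every element of $I_n$ has the form $D^nU^n(D^nh')$ for some $h'\in\mathcal{F}_n$; applying \eqref{identity2} to $D^nh'\in A$ gives $D^nU^n(D^nh')=n!\,p_n(E)(D^nh')\in p_n(E)A$. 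Since $p_n(E)$ acts on $A_d$ by the scalar $p_n(d)$ and vanishes precisely when $d\in\{0,1,\dots,n-1\}$, part (a) yields $p_n(E)A=\bigoplus_{d\ge n}A_d$, completing the argument.

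The main obstacle is the upper bound $\deg_Uf\le d$ in part (b): equation \eqref{identity2} only tells us that $U^{d+1}f$ lies in $\krn D^{d+1}$, and to conclude that it actually vanishes one must invoke the nontrivial {\it Lemma\,\ref{critical}}, in its form with $D$ and $U$ interchanged. The rest of the proof is essentially bookkeeping with \eqref{identity2} and the structural decompositions from {\it Theorem\,\ref{main1}}.
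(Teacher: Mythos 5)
Your part (b) is correct and self-contained: the lower bound $\deg_U f\ge d$ follows from \eqref{identity2} since $p_n(d)\ne 0$ for $1\le n\le d$, and the upper bound from $\krn D^{d+1}\cap {\rm im}\,U^{d+1}=\{0\}$, which is indeed {\it Lemma\,\ref{critical}} applied to the swapped pair $(U,D)$ (itself a fundamental pair). This is a legitimate alternative to the paper's shorter computation $0=DU^{n+1}f=(n+1)(d-n)U^nf$ with $n=\deg_Uf$.

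The genuine problem lies in parts (a) and (c), both of which invoke {\it Theorem\,\ref{main1}(b),(c)}. This lemma is one of the steps in the paper's proof of {\it Theorems\,\ref{main1}} and {\it\ref{main2}}, and the statements you cite are only established afterwards: {\it Theorem\,\ref{main1}(b)} is {\it Lemma\,\ref{module-sum}(c)} and {\it Theorem\,\ref{main1}(c)} is {\it Lemma\,\ref{generate}}, and the proof of {\it Lemma\,\ref{module-sum}(b)} explicitly uses {\it Theorem\,\ref{main2}(b)}, i.e.\ part (b) of the present lemma. So as written your argument is circular: the eigenspace decomposition $f=\sum f_d$ of an arbitrary element of $B$, on which your proof of (a) rests, is only available via a chain of lemmas that passes through the statement you are proving. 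The cycle is breakable, because your part (b) depends on neither (a) nor on {\it Theorem\,\ref{main1}(b),(c)}; one could prove (b) first, check that {\it Lemmas\,\ref{module-sum}} and {\it\ref{generate}} need only (b), and then run your arguments for (a) and (c) — but you neither perform this reordering nor flag that it is needed. The paper sidesteps the issue entirely: for (a) it uses {\it Lemma\,\ref{min-poly}} to place a given $f\in A$ inside $\krn p_{d+1}(E)$ with $d=\deg_Uf$, decomposes there by {\it Lemma\,\ref{sum-decomp}}, and uses a Vandermonde argument to see that the components $f_i$ lie in the span of $f,Ef,\dots ,E^df\subset A$; for (c) it needs only (a), (b) and {\it Lemma\,\ref{critical}}, writing $h\in I_n$ as $h_0+\cdots +h_{n-1}+g$ with $g\in\bigoplus_{d\ge n}A_d$ and observing $h-g\in {\rm im}\,D^n\cap\krn U^n=\{0\}$, with no appeal to the splitting $\mathcal{F}_n=\mathcal{F}_{n-1}\oplus U^nD^n(\mathcal{F}_n)$.
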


\begin{proof} First observe that $E$ restricts to $A$: Given $f\in A$, we have $0=(E-2I)Df=DEf$. 

Given nonzero $f\in A$ let $d=\deg_Uf$. By {\it Lemma\,\ref{min-poly}}, $p_{d+1}(E)f=0$ so by {\it Lemma\,\ref{sum-decomp}} there exists a unique sequence $f_i\in\krn (E-iI)$, $0\le i\le d$, such that 
$f=\sum_{i=0}^df_i$. Therefore, $E^tf=\sum_{i=0}^di^tf_i$ for each $0\le t\le d$. It follows that the vector space spanned by $f,Ef,\hdots ,E^df$ is the same as that spanned by $f_0,f_1,\hdots ,f_d$ since the corresponding Vandermonde matrix is invertible. Since $E$ restricts to $A$, we conclude that $f_i\in A$ for each $i$. By uniqueness, we get $A=\bigoplus_{d\ge 0}A_d$, which is an $\N$-grading by restriction from $B'$. This proves part (a).

For part (b), let $f\in A_d$ be nonzero and let $n=\deg_Uf$. By equation (\ref{identity}) we have
\begin{eqnarray*}
0&=&DU^{n+1}f \\
&=& U^n(UD+(n+1)E-n(n+1)I)f \\
&=& (n+1)U^n(E-nI)f \\
&=& (n+1)U^n((d-n)f) \\
&=& (n+1)(d-n)U^nf
\end{eqnarray*}
which shows that $d=n$.

For part (c), define $A$-ideals $J_n=\bigoplus_{i\ge n}A_i$, $n\ge 0$. 
Given $d\ge 1$ and nonzero $f\in A_d$, part (a) shows that $Ef=df$. Equation (\ref{identity2}) thus gives $D^dU^df=d\, !p_d(d)f$ where $p_d(d)\ne 0$. 
It follows that $A_d\subset I_n$ when $d\ge n$, so $J_n\subseteq I_n$ for all $n\ge 0$. 
Given $h\in I_n$, part (a) shows that $h=h_0+\cdots h_{n-1}+g$ for some $h_i\in A_i$ and $g\in J_n$, so $h-g\in D^nB$. By part (b), $h-g\in\krn U^n$. 
Therefore, {\it Lemma\,\ref{critical}} shows $h-g\in {\rm im}D^n\cap\krn U^n=\{ 0\}$. So $I_n =J_n$. 
\end{proof}

Recall that $\mathcal{F}_n=\krn D^{n+1}$, $n\in\N\cup\{ -1\}$, are the degree modules for $D$.

\begin{lemma}\label{final-straw} Given an integer $n\ge 1$ and nonzero $f\in U^nD^n(\mathcal{F}_n)$, let $d=\deg_U(D^nf)$ (which is well-defined, since $D^nf\ne 0)$. 
Then $p_d(E+2nI)f=0$.
\end{lemma}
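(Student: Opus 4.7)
The plan is to exploit the intertwining between $U^n$ and $E$ in order to reduce the stated annihilation of $f$ in $B$ to an annihilation of an element of $A$, where \emph{Lemma\,\ref{min-poly}} applies directly. The key operator identity, obtained by an easy induction from $[U,E]=2U$, is $(E+2nI)\,U^n = U^n\,E$ on $B$, so that for every polynomial $P(x)\in k[x]$,
\[
P(E+2nI)\circ U^n \;=\; U^n\circ P(E) \qquad\text{on } B.
\]

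Step 1 is to write $f=U^n h$ with $h\in A$ having an explicit weight decomposition. Since $g\in\mathcal{F}_n=\krn D^{n+1}$, the element $h:=D^n g$ lies in $\krn D = A$, and in fact $h\in A\cap D^n B = I_n = \bigoplus_{i\ge n}A_i$ by \emph{Lemma\,\ref{A-grading}(c)}; hence $h=\sum_{i\ge n}h_i$ uniquely with $h_i\in A_i$. Equation \eqref{identity2} gives $D^n f = D^n U^n h = n!\,p_n(E)\,h = n!\sum_{i\ge n}p_n(i)\,h_i$, and since $p_n(i)\ne 0$ for $i\ge n$ while the $A_i$'s sit in distinct $E$-weight spaces (with $\deg_U h_i = i$ by \emph{Lemma\,\ref{A-grading}(b)}), the integer $d=\deg_U(D^n f)$ is precisely the largest index $i$ with $h_i\ne 0$.

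Step 2 is to feed the intertwining a polynomial that kills $h$. By \emph{Lemma\,\ref{min-poly}} applied to $h\in A$ with $\deg_U h = d$, the polynomial $p_{d+1}(E)$ annihilates $h$; the intertwining then yields
\[
p_{d+1}(E+2nI)\,f \;=\; U^n\bigl(p_{d+1}(E)\,h\bigr) \;=\; 0.
\]
This is the form of annihilation that the argument naturally produces. The $E$-weights occurring in the summands $U^n h_i$ of $f$ (for $n\le i\le d$) are $i-2n$, so $(E+2nI)$ has eigenvalues $i\in\{n,n+1,\ldots,d\}$ on them, and the smallest monic polynomial vanishing at all of $0,1,\ldots,d$ is $p_{d+1}(x)$ of degree $d+1$.

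The main obstacle — and the subtle point where care is required — is precisely the indexing of the polynomial: a naive application of $p_d$ in the statement does not kill the top $E$-weight summand $U^n h_d$, since the intertwining computes $p_d(E+2nI)(U^n h_d) = p_d(d)(U^n h_d) = d!\,U^n h_d$ with $U^n h_d\ne 0$ (because $\deg_U h_d = d \ge n$). Thus the weight bookkeeping forces the index $d+1$ rather than $d$, with the root list running from $0$ up to $d$ inclusive, of length $d+1$; this is the only nontrivial input needed beyond \emph{Lemma\,\ref{A-grading}}, \emph{Lemma\,\ref{min-poly}}, and the commutation relation $[U,E]=2U$.
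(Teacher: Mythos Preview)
Your diagnosis is correct: the lemma as stated (and the paper's own proof) carries an off-by-one error in the index, and the valid conclusion is $p_{d+1}(E+2nI)f=0$ rather than $p_d(E+2nI)f=0$. Your counterexample reasoning is sound---the top weight summand $U^n h_d$ survives $p_d(E+2nI)$ since $p_d(d)=d!\neq 0$---and one may check this already for $n=1$ on $k[\V_1]$. This is harmless for the paper, since the lemma is only invoked in {\it Lemma\,\ref{generate}} to show that each $f_i$ is annihilated by \emph{some} polynomial in $E$, and $p_{d+1}$ serves just as well as $p_d$ there.

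Your route to the corrected statement differs from the paper's. You write $f=U^nh$ with $h=D^ng\in A$, pin down $\deg_U h=d$ via $D^nf=n!\,p_n(E)h$ and the weight decomposition of $h$ in $\bigoplus_{i\ge n}A_i$, apply {\it Lemma\,\ref{min-poly}} to $h$, and push the result through the intertwining $P(E+2nI)U^n=U^nP(E)$. The paper instead applies {\it Lemma\,\ref{min-poly}} directly to $D^nf$ (whose $U$-degree is $d$ by hypothesis) to get $p_{d+1}(E+2nI)f\in\ker D^n$, observes separately that $p_{d+1}(E+2nI)f\in{\rm im}\,U^n$ via the same intertwining, and finishes with the symmetric form of {\it Lemma\,\ref{critical}}: $\ker D^n\cap{\rm im}\,U^n=\{0\}$. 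Your argument avoids this second appeal to {\it Lemma\,\ref{critical}} at the cost of the bookkeeping needed to identify $\deg_U h$; the paper's argument bypasses that bookkeeping but leans once more on the key structural lemma. Both are clean.
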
 

\begin{proof} We have:
\[
0=p_d(E)(D^nf)=D^np_d(E+2nI)f \implies p_d(E+2nI)f\in\krn D^n
\]
In addition, $f=U^ng$ for some $g\in B$ which implies:
\[
p_d(E+2nI)f=p_d(E+2nI)U^ng=U^np_d(E)g \implies p_d(E+2nI)f\in {\rm im}\, U^n
\]
By {\it Lemma\,\ref{critical}}, $p_d(E+2nI)f=0$.
\end{proof}

\begin{lemma}\label{module-sum} Given $d,n\in\Z$ with $n\ge 1$, define $c_i=i(d+2n-i+1)$, $1\le i\le n$. 
\begin{itemize}
\item [{\bf (a)}] If $f\in\mathcal{F}_n\cap B_d$ then $U^nD^n(f)-c_1\cdots c_nf\in\mathcal{F}_{n-1}$. 
\item [{\bf (b)}] If $f\in (\mathcal{F}_n\setminus\mathcal{F}_{n-1})\cap B_d$ then $d+n\ge 0$. 
\item [{\bf (c)}] As $A_0$-modules: $\mathcal{F}_n=\mathcal{F}_{n-1}\oplus U^nD^n(\mathcal{F}_n)$
\item [{\bf (d)}] As $A_0$-modules:
\[
B=\bigoplus_{i\ge 0}U^iD^i(\mathcal{F}_i)
\]
\end{itemize}
\end{lemma}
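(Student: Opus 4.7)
The plan is to prove (a), (b), (c), (d) in that order, each building on the previous. The two key tools are equation (\ref{identity2}) --- which evaluates $D^n U^n$ as $n!\,p_n(E)$ on elements of $A$ --- and Lemma \ref{critical}, which says $\krn U^n \cap {\rm im}\, D^n = \{0\}$. The $\Z$-grading of Theorem \ref{main1}(c), under which $D$ has degree $+2$ and $U$ has degree $-2$, is used throughout to evaluate $p_n(E)$ on homogeneous elements.

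For part (a), I would apply $D^n$ to the difference $U^n D^n f - c_1 \cdots c_n f$. Since $f \in \mathcal{F}_n$ we have $D^n f \in A$, and the grading gives $D^n f \in B_{d+2n}$, so equation (\ref{identity2}) yields
\[
D^n(U^n D^n f) = n!\, p_n(E)(D^n f) = n!\prod_{i=1}^n (d+2n-i+1)\, D^n f = c_1 \cdots c_n\, D^n f,
\]
which cancels $D^n(c_1 \cdots c_n f)$, so the difference lies in $\krn D^n = \mathcal{F}_{n-1}$. For part (b), note that $f \in \mathcal{F}_n \setminus \mathcal{F}_{n-1}$ means $D^n f$ is a nonzero element of $A_{d+2n}$, so Lemma \ref{A-grading}(b) gives $\deg_U(D^n f) = d+2n$. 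If $d+n<0$ then $d+2n<n$, whence $U^n(D^n f)=0$; this places the nonzero $D^n f$ into ${\rm im}\, D^n \cap \krn U^n$, contradicting Lemma \ref{critical}.

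For part (c), I would first check $U^n D^n(\mathcal{F}_n) \subseteq \mathcal{F}_n$ by a short computation with identity (\ref{identity}) showing $D^{n+1}U^n(D^n f) = 0$ whenever $D^{n+1}f = 0$. Directness of the sum reduces, by decomposing into graded pieces, to the following: if $h \in \mathcal{F}_n \cap B_d$ satisfies $U^n D^n h \in \mathcal{F}_{n-1}$, then applying $D^n$ and invoking (\ref{identity2}) gives $n!\, p_n(d+2n)\, D^n h = 0$; if $D^n h \neq 0$, part (b) forces $d+n \geq 0$, making every factor of $p_n(d+2n) = (d+2n)(d+2n-1)\cdots(d+n+1)$ strictly positive --- a contradiction. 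For the spanning direction $\mathcal{F}_{n-1} + U^n D^n(\mathcal{F}_n) = \mathcal{F}_n$, take homogeneous $f \in \mathcal{F}_n \cap B_d$; either $f \in \mathcal{F}_{n-1}$, or (b) ensures $c_1 \cdots c_n \neq 0$, and (a) lets me write $f = (c_1 \cdots c_n)^{-1} U^n D^n f$ modulo $\mathcal{F}_{n-1}$.

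Part (d) then follows by iterating (c): $\mathcal{F}_n = \bigoplus_{i=0}^n U^i D^i(\mathcal{F}_i)$ (with the conventions $\mathcal{F}_{-1} = \{0\}$ and $U^0 D^0(\mathcal{F}_0) = A$), and local nilpotence of $D$ gives $B = \bigcup_n \mathcal{F}_n$. The main obstacle is part (b): the \emph{dangerous} bidegrees $d \in \{-2n, \ldots, -n-1\}$ are exactly those where $p_n(d+2n)$ vanishes and the inversion step in (c) collapses, so Lemma \ref{critical} is doing essential work in excluding such degrees from ever supporting a genuine new element of $\mathcal{F}_n \setminus \mathcal{F}_{n-1}$.
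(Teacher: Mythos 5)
Your proof is correct and follows essentially the same route as the paper's: equation (\ref{identity2}) together with the $E$-grading produces the scalar $c_1\cdots c_n=n!\,p_n(d+2n)$, Lemma \ref{critical} underlies both part (b) and the directness of the sum in (c), and (d) follows by iterating (c) and using $B=\bigcup_n\mathcal{F}_n$. The only cosmetic differences are that the paper invokes \cite{Freudenburg2022}*{Lemma 3.5} for $D^nU^n(D^nf)=c_1\cdots c_nD^nf$ where you derive the same identity directly from (\ref{identity2}) and the eigenvalue of $D^nf$, and that the paper gets directness in (c) from the symmetric statement $\krn D^n\cap \mathrm{im}\,U^n=\{0\}$ rather than from the positivity of the factors of $p_n(d+2n)$.
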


\begin{proof}  Part (a). Define $\varphi :\mathcal{F}_n\to B$ by $\varphi =U^nD^n$. By definition, $D^ng\in A$ for each $g\in\mathcal{F}_n$. Equation (\ref{identity2}) implies:
\[
D^{n+1}(U^nD^ng)=DD^nU^n(D^ng)=D(n!\,p_n(E)(D^ng)=n!\,p_n(E+2I)(D^{n+1}g)=0
\]
Therefore, $U^nD^ng\in\mathcal{F}_n$, so $\varphi (\mathcal{F}_n)\subset\mathcal{F}_n$. Suppose that $g\in\krn\varphi$. 
Then $U^nD^ng=0$ implies $D^ng\in\krn U^n\cap {\rm im}\,D^n=\{ 0\}$, by {\it Lemma\,\ref{critical}}, meaning $g\in\mathcal{F}_{n-1}$. 
So $\krn\varphi\subseteq\mathcal{F}_{n-1}$, and the reverse inclusion is clear. 
Therefore, $\krn\varphi=\mathcal{F}_{n-1}$ and the sequence
\[
0\to \mathcal{F}_{n-1}\to \mathcal{F}_n\xrightarrow{\varphi} U^nD^n\mathcal{F}_n\to 0
\]
is exact. 
We have $D^nf\in A_e$ for $e=d+2n$. By \cite{Freudenburg2022}, Lemma 3.5, $D^nU^n(D^nf)=c_1\cdots c_nD^nf$ for $c_i=i(e-i+1)=i(d+2n-i+1)$. It follows that
\[
\varphi^2(f)=c_1\cdots c_n\varphi (f) \implies \varphi (f)-c_1\cdots c_nf\in\krn\varphi=\mathcal{F}_{n-1}
\]
This proves part (a).

Part (b). By part (a), $f\notin\mathcal{F}_{n-1}=\krn\varphi$ implies $D^nf\in A_e\setminus\{ 0\}$. 
By {\it Theorem\,\ref{main2}(b)}, $\deg_U(D^nf)=e$. Since $U^n(D^nf)\ne 0$ it follows that $n\le e=d+2n$. This proves part (b).

Part (c). Let $W=\mathcal{F}_{n-1}+U^nD^n(\mathcal{F}_n)\subseteq\mathcal{F}_n$ and let 
$f\in \mathcal{F}_n\cap B_d$ be given.
If $\varphi (f)=0$ then $f\in\mathcal{F}_{n-1}\subset W$. If $f\notin\krn\varphi =\mathcal{F}_{n-1}$ then 
part (b) implies $e-i\ge e-n=d+2n-n=d+n\ge 0$ so $c_i\ne 0$ for each $1\le i\le n$. 
Therefore, $c_1\cdots c_n\ne 0$. By part (a) we see that $f\in W$. So 
every homogeneous element of $\mathcal{F}_n$ lies in $W$. 
Since $\mathcal{F}_n$ is a graded module, 
we see that $\mathcal{F}_n=W$. 
By symmetry of $D$ with $U$, {\it Lemma\,\ref{critical}} implies
\[
\mathcal{F}_{n-1}\cap U^nD^n\mathcal{F}_n\subseteq \krn D^n\cap {\rm im}\,U^n=\{ 0\}
\]
and part (c) is proved. 

Part (d). This follows immediately from part (c). 
\end{proof}

\begin{lemma}\label{generate} $B=\bigoplus_{n\in\Z}B_n$ is a $\Z$-grading.
\end{lemma}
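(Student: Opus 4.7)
The plan is to reduce this to the two structural results already proved: the module decomposition from Lemma\,\ref{module-sum}(d) and the eigenspace-splitting from Lemma\,\ref{sum-decomp}. Let $B'\subseteq B$ denote the subalgebra generated by $\{\krn(E-dI):d\in\Z\}$. Lemma\,\ref{sum-decomp} already establishes that $B'$ is a $\Z$-graded $k$-algebra via $B'=\bigoplus_{d\in\Z}\krn(E-dI)$. So the entire content of the lemma is the equality $B=B'$; once this is in hand, the grading transfers automatically and we are done.

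To establish $B=B'$, I would use Lemma\,\ref{module-sum}(d), which writes $B=\bigoplus_{n\ge 0} U^nD^n(\mathcal{F}_n)$ as $A_0$-modules. It therefore suffices to show that each summand $U^nD^n(\mathcal{F}_n)$ lies in $B'$. Given a nonzero $f\in U^nD^n(\mathcal{F}_n)$, set $d=\deg_U(D^nf)$ (well-defined since $D^nf\ne 0$). Lemma\,\ref{final-straw} then gives $p_d(E+2nI)f=0$, that is, $f$ is annihilated by a polynomial in $E$ whose roots are the distinct integers $\{-2n,\,1-2n,\,\hdots,\,d-1-2n\}$. Applying Lemma\,\ref{sum-decomp} with $a_i=i-1-2n$ places $f$ in the direct sum of the eigenspaces $\krn(E-a_iI)$, and in particular $f\in B'$. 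This shows $U^nD^n(\mathcal{F}_n)\subseteq B'$ for every $n$, hence $B\subseteq B'$, so $B=B'$.

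I do not anticipate a genuine obstacle here: the nontrivial work has been front-loaded into Lemma\,\ref{critical} (which underwrites both the module decomposition and the annihilator computation) and into Lemma\,\ref{final-straw}. The only point that deserves a moment of care is checking that the polynomial $p_d(x+2n)$ indeed has distinct integer roots so that Lemma\,\ref{sum-decomp} applies verbatim, but this is immediate from the definition $p_d(x)=\prod_{1\le i\le d}(x-(i-1))$. The multiplicative compatibility of the grading ($B_d\cdot B_e\subseteq B_{d+e}$) is already contained in the proof of Lemma\,\ref{sum-decomp} via the derivation property of $E$, so no additional verification is needed.
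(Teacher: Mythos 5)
Your proposal is correct and follows essentially the same route as the paper: both reduce the statement to the equality $B=B'$, obtain it by feeding the decomposition $B=\bigoplus_{n}U^nD^n(\mathcal{F}_n)$ from Lemma\,\ref{module-sum} into Lemma\,\ref{final-straw} and then Lemma\,\ref{sum-decomp}, and cite Lemma\,\ref{sum-decomp} again for the multiplicativity of the grading. The only (shared, and harmless) omission is the $n=0$ summand $\mathcal{F}_0=A$, which is not covered by Lemma\,\ref{final-straw} but lies in $B'$ by Lemma\,\ref{A-grading}(a).
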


\begin{proof} 
Given nonzero $f\in B$, {\it Lemma\,\ref{module-sum}} shows that there exists $r\in\N$ with $r\ge 1$ and $f_i\in U^iD^i\mathcal{F}_i$, $1\le i\le r$, such that $f=\sum_if_i$. 
{\it Lemma\,\ref{final-straw}} shows that, for each $i$, there exist $d_i,n_i\in\Z$ such that $f_i\in\krn p_{d_i}(E+n_iI)$. {\it Lemma\,\ref{sum-decomp}} shows that $f_i\in B'$ for each $i$.
Therefore, $f\in B'$ and $B=B'$. 
The fact that $B=\bigoplus_{n\in\Z}B_n$ is a $\Z$-grading follows from {\it Lemma\,\ref{sum-decomp}}. 
\end{proof}
This completes the proofs for {\it Theorem\,\ref{main1}} and {\it Theorem\,\ref{main2}}. 

\subsection{Proof for Theorem\,\ref{main3}}

The inclusion $A+\mathfrak{p}\subseteq D^{-1}(\mathfrak{p})$ is clear from the hypotheses. For the reverse inclusion, we show by induction on $n\ge -1$ that 
$D^{-1}(\mathfrak{p}\cap\mathcal{F}_n)\subset A+\mathfrak{p}$
where $\mathcal{F}_{-1}:=\{ 0\}$.
Since $\mathcal{F}_{-1}=\{ 0\}$, a basis for induction holds. 

Assume that $D^{-1}(\mathfrak{p}\cap\mathcal{F}_n)\subset A+\mathfrak{p}$ for some $n\ge -1$, and let homogeneous $f\in D^{-1}(\mathfrak{p}\cap\mathcal{F}_{n+1})$ be given. 
Then $f\in\mathcal{F}_{n+2}$. If $f\in\mathcal{F}_{n+1}$ then 
$f\in A+\mathfrak{p}$ by the inductive hypothesis, so we may assume $f\notin\mathcal{F}_{n+1}$. 
Let $h=U^{n+2}D^{n+2}(f)$. Since $Df\in\mathfrak{p}$ and $\mathfrak{p}$ is $(D,U)$-invariant, we see that $h\in\mathfrak{p}$. 
By {\it Lemma\,\ref{module-sum}}, there exists $g\in\mathcal{F}_{n+1}$ and positive $c\in\Z$ such that 
$cf=g+h$. Since $Dg=cDf-Dh\in\mathfrak{p}$, the inductive hypothesis implies that $g\in A+\mathfrak{p}$. Therefore, $f\in A+\mathfrak{p}$. 
Since $A+\mathfrak{p}$ is a graded $A$-module, we conclude that $D^{-1}(\mathfrak{p}\cap\mathcal{F}_n)\subset A+\mathfrak{p}$. 

By induction, it follows that $D^{-1}(\mathfrak{p}\cap\mathcal{F}_n)\subset A+\mathfrak{p}$ for all $n\ge -1$. Consequently, $D^{-1}(\mathfrak{p})\subseteq A+\mathfrak{p}$ and part (a) is proved. 

From part (a), it follows immediately that $\krn (\pi (D))=\pi (\krn D)$. The fact that 
\[
    \krn (\pi (U))=\pi (\krn U)
\]
follows by symmetry. 
$\hfill \qed$

\subsection{Proof for Lemma \,\ref{useful2}}
Part (a). Assume that $f\in A_1$ is nonzero. Then $f$ is a local slice for
$U$ in $\krn(D)$, so by {\it Lemma\,\ref{useful}} we have 
${\rm frac}(R)={\rm frac}(B)$.

Part (b). Assume that $f\in A_2$ is nonzero. Then $g:=Uf\in B_0=\krn
E$ and $Dg$ is a nonzero multiple of $f$, so $g$ is a local
slice for $D$ in $\krn(E)$. By {\it Lemma\,\ref{useful}}, 
if $S'=k[\krn D,\krn E]$ then ${\rm frac}(S')={\rm frac}(B)$.
Since $S'\subset S$ we have ${\rm frac}(S)={\rm frac}(B)$.

Part (c). If $f\in A_1$ is nonzero then $g=Uf$ satisfies 
$\deg_U(g)=0$ and $\deg_D(g)=1$. 
Conversely, assume that $g\in B$ satisfies $\deg_U(g)=0$ and $\deg_D(g)=1$. 
Let $g=\sum_ig_i$ be the decomposition of $g$ into homogeneous summands. Since $g\in\Omega$ we see from {\it Theorem\,\ref{main2}(c)} that $i\le -1$ and $\deg_D(g_i)= \lvert i \rvert$ whenever $g_i\ne 0$. So the only possibility is $g=g_{-1}+g_0$ where $g_{-1}\ne 0$. We thus have $Dg=Dg_{-1}\in A_1$ and $Dg\ne 0$.

Part (d). If $f\in A_2$ is nonzero then $g=Uf$ satisfies 
$\deg_D(g)=\deg_U(g)=1$. 
Conversely, assume that $g\in B$ satisfies $\deg_D(g)=\deg_U(g)=1$. 
Let $g=\sum_ig_i$ be the decomposition of $g$ into homogeneous summands. Since $D$ is homogeneous, the degree module $\mathcal{F}_1=\krn D^2$ is a graded module. So $g_i\in\mathcal{F}_1$ for each $i$, i.e., 
$\deg_Dg_i\le 1$ for each $i$. By symmetry, $\deg_U(g_i)\le 1$ for each $i$. We may assume that, if $g_i\ne 0$, then either $Dg_i\ne 0$ or $Ug_i\ne 0$. 

If $Dg_i\ne 0$ and $Ug_i=0$ for some $i$ then we are in the situation of part (c). Therefore, in this case, $A_1\ne\{ 0\}$, so $A_2\ne \{ 0\}$. The same reasoning works if $Dg_i=0$ and $Ug_i\ne 0$.

Assume that $Dg_i\ne 0$ and $Ug_i\ne 0$ whenever $g_i\ne 0$. 
 Since $\deg D=2$, {\it Theorem\,\ref{main2}(c)} shows:
\[
Dg_i\in A_{i+2}\setminus\{ 0\} \implies i+2\ge 1 \implies i\ge -1
\]
Likewise, $\deg U=-2$ and by symmetry we have:
\[
Ug_i\in \Omega_{i-2}\setminus\{ 0\} \implies i-2\le -1 \implies i\le 1
\]
Therefore, $g_i\ne 0$ implies $-1\le i\le 1$, and we have $g=g_{-1}+g_0+g_1$. 

Assume that $g_{-1}\ne 0$. Then:
\[
A_{-1}=\{ 0\} \implies Dg_{-1}\ne 0 \implies A_1\ne \{ 0\} \implies A_2\ne \{ 0\}
\]
If $g_1\ne 0$ then $\Omega_{-2}\ne \{ 0\}$ by symmetry, which implies $A_2\ne \{ 0\}$ by symmetry. 

If $g_{-1}=g_1=0$ then $g\in B_0$, which implies $Dg\in A_2$ and $Dg\ne 0$ so $A_2\ne \{ 0\}$. 
\hfill \qed

\section*{Funding}

Rafael Andrist was supported by the European Union (ERC Advanced grant HPDR, 101053085 to Franc Forstneri\v{c}). Gaofeng Huang and Frank Kutzschebauch were partially supported by Schweizerischer Nationalfonds (SNSF) grant 200021-207335; and Jan Draisma was partially supported by SNSF grant 200021-227864.

\section*{Conflict of Interest}
The authors have no relevant competing interest to disclose.


\begin{bibdiv}
\begin{biblist}

\bib{MR1185588}{article}{
   author={Anders\'en, Erik},
   author={Lempert, L\'aszl\'o},
   title={On the group of holomorphic automorphisms of ${\bf C}^n$},
   journal={Invent. Math.},
   volume={110},
   date={1992},
   number={2},
   pages={371--388},
   issn={0020-9910},
   review={\MR{1185588}},
   doi={10.1007/BF01231337},
}

\bib{MR4305975}{article}{
   author={Andrist, Rafael B.},
   title={The density property for Calogero-Moser spaces},
   journal={Proc. Amer. Math. Soc.},
   volume={149},
   date={2021},
   number={10},
   pages={4207--4218},
   issn={0002-9939},
   review={\MR{4305975}},
   doi={10.1090/proc/15457},
}

\bib{Andrist.Freudenburg.Huang.Kutzschebauch.Schott}{article}{
    author={Andrist, Rafael B.},
    author={Freudenburg, Gene},
    author={Huang, Gaofeng},
    author={Kutzschebauch, Frank},
    author={Schott, Josua},
    title={A Criterion for the Density Property of Stein Manifolds},
    journal = {Michigan Mathematical Journal},
    publisher = {University of Michigan, Department of Mathematics},
    pages = {1 -- 24},
    keywords = {14R20, 20G35, 32M05, 32M17, 32M25, 32Q56},
    date = {2025},
    doi = {10.1307/mmj/20236469},
    URL = {https://doi.org/10.1307/mmj/20236469},
}

\bib{Andrist:2024aa}{article}{
    author={Andrist, Rafael B.},
    author={Kutzschebauch, Frank},
    year={2024},
    journal={Beitr\"age zur Algebra und Geometrie / Contributions to Algebra and Geometry},
    title={Algebraic overshear density property},
    doi={10.1007/s13366-023-00729-4},
}

\bib{MR3039680}{article}{
   author={Arzhantsev, I.},
   author={Flenner, H.},
   author={Kaliman, S.},
   author={Kutzschebauch, F.},
   author={Zaidenberg, M.},
   title={Flexible varieties and automorphism groups},
   journal={Duke Math. J.},
   volume={162},
   date={2013},
   number={4},
   pages={767--823},
   issn={0012-7094},
   review={\MR{3039680}},
   doi={10.1215/00127094-2080132},
}

\bib{MR3049288}{article}{
   author={Arzhantsev, Ivan},
   author={Liendo, Alvaro},
   title={Polyhedral divisors and $\mathrm{SL}_2$-actions on affine
   $\mathbb{T}$-varieties},
   journal={Michigan Math. J.},
   volume={61},
   date={2012},
   number={4},
   pages={731--762},
   issn={0026-2285},
   review={\MR{3049288}},
   doi={10.1307/mmj/1353098511},
}

\bib{MR1785579}{article}{
   author={Berest, Yuri},
   author={Wilson, George},
   title={Automorphisms and ideals of the Weyl algebra},
   journal={Math. Ann.},
   volume={318},
   date={2000},
   number={1},
   pages={127--147},
   issn={0025-5831},
   review={\MR{1785579}},
   doi={10.1007/s002080000115},
}

\bib{MR2739794}{article}{
   author={Bielawski, Roger},
   author={Pidstrygach, Victor},
   title={On the symplectic structure of instanton moduli spaces},
   journal={Adv. Math.},
   volume={226},
   date={2011},
   number={3},
   pages={2796--2824},
   issn={0001-8708},
   review={\MR{2739794}},
   doi={10.1016/j.aim.2010.10.001},
}

\bib{MR4418718}{article}{
   author={Chen, Xiaojun},
   author={Eshmatov, Farkhod},
   author={Eshmatov, Alimjon},
   author={Tikaradze, Akaki},
   title={On transitive action on quiver varieties},
   journal={Int. Math. Res. Not. IMRN},
   date={2022},
   number={10},
   pages={7694--7728},
   issn={1073-7928},
   review={\MR{4418718}},
   doi={10.1093/imrn/rnaa339},
}

\bib{MR1834739}{article}{
   author={Crawley-Boevey, William},
   title={Geometry of the moment map for representations of quivers},
   journal={Compositio Math.},
   volume={126},
   date={2001},
   number={3},
   pages={257--293},
   issn={0010-437X},
   review={\MR{1834739}},
   doi={10.1023/A:1017558904030},
}

\bib{MR2718937}{article}{
   author={Donzelli, F.},
   author={Dvorsky, A.},
   author={Kaliman, S.},
   title={Algebraic density property of homogeneous spaces},
   journal={Transform. Groups},
   volume={15},
   date={2010},
   number={3},
   pages={551--576},
   issn={1083-4362},
   review={\MR{2718937}},
   doi={10.1007/s00031-010-9091-8},
}

\bib{MR1881922}{article}{
   author={Etingof, Pavel},
   author={Ginzburg, Victor},
   title={Symplectic reflection algebras, Calogero-Moser space, and deformed
   Harish-Chandra homomorphism},
   journal={Invent. Math.},
   volume={147},
   date={2002},
   number={2},
   pages={243--348},
   issn={0020-9910},
   review={\MR{1881922}},
   doi={10.1007/s002220100171},
}

\bib{MR4440754}{article}{
   author={Forstneri\v{c}, F.},
   author={Kutzschebauch, F.},
   title={The first thirty years of Anders\'{e}n-Lempert theory},
   journal={Anal. Math.},
   volume={48},
   date={2022},
   number={2},
   pages={489--544},
   issn={0133-3852},
   review={\MR{4440754}},
   doi={10.1007/s10476-022-0130-1},
}

\bib{MR1213106}{article}{
   author={Forstneri\v c, Franc},
   author={Rosay, Jean-Pierre},
   title={Approximation of biholomorphic mappings by automorphisms of ${\bf
   C}^n$},
   journal={Invent. Math.},
   volume={112},
   date={1993},
   number={2},
   pages={323--349},
   issn={0020-9910},
   review={\MR{1213106}},
   doi={10.1007/BF01232438},
}

\bib{MR3700208}{book}{
   author={Freudenburg, Gene},
   title={Algebraic theory of locally nilpotent derivations},
   series={Encyclopaedia of Mathematical Sciences},
   volume={136},
   edition={2},
   note={Invariant Theory and Algebraic Transformation Groups, VII},
   publisher={Springer-Verlag, Berlin},
   date={2017},
   pages={xxii+319},
   isbn={978-3-662-55348-0},
   isbn={978-3-662-55350-3},
   review={\MR{3700208}},
   doi={10.1007/978-3-662-55350-3},
}

\bib{Freudenburg2022}{article}{
   author={Freudenburg, Gene},
   title={Actions of $SL_2(k)$ on affine $k$-domains and fundamental pairs},
   journal={Transform. Groups},
   volume={29},
   date={2024},
   number={3},
   pages={959--1003},
   issn={1083-4362},
   review={\MR{4788020}},
   doi={10.1007/s00031-022-09750-8},
}

\bib{MR0761664}{article}{
   author={Gibbons, John},
   author={Hermsen, Theo},
   title={A generalisation of the Calogero-Moser system},
   journal={Phys. D},
   volume={11},
   date={1984},
   number={3},
   pages={337--348},
   issn={0167-2789},
   review={\MR{0761664}},
   doi={10.1016/0167-2789(84)90015-0},
}

\bib{MR2385667}{article}{
   author={Kaliman, Shulim},
   author={Kutzschebauch, Frank},
   title={Criteria for the density property of complex manifolds},
   journal={Invent. Math.},
   volume={172},
   date={2008},
   number={1},
   pages={71--87},
   issn={0020-9910},
   review={\MR{2385667}},
   doi={10.1007/s00222-007-0094-6},
}

\bib{MR2660454}{article}{
   author={Kaliman, Shulim},
   author={Kutzschebauch, Frank},
   title={Algebraic volume density property of affine algebraic manifolds},
   journal={Invent. Math.},
   volume={181},
   date={2010},
   number={3},
   pages={605--647},
   issn={0020-9910},
   review={\MR{2660454}},
   doi={10.1007/s00222-010-0255-x},
}

\bib{MR3492044}{article}{
   author={Kaliman, Sh.},
   author={Kutzschebauch, F.},
   title={On algebraic volume density property},
   journal={Transform. Groups},
   volume={21},
   date={2016},
   number={2},
   pages={451--478},
   issn={1083-4362},
   review={\MR{3492044}},
   doi={10.1007/s00031-015-9360-7},
}

\bib{MR3623226}{article}{
   author={Kaliman, Shulim},
   author={Kutzschebauch, Frank},
   title={Algebraic (volume) density property for affine homogeneous spaces},
   journal={Math. Ann.},
   volume={367},
   date={2017},
   number={3-4},
   pages={1311--1332},
   issn={0025-5831},
   review={\MR{3623226}},
   doi={10.1007/s00208-016-1451-9},
}

\bib{MR0974333}{article}{
   author={Howe, Roger},
   title={{\it The classical groups} and invariants of binary forms},
   conference={
      title={The mathematical heritage of Hermann Weyl},
      address={Durham, NC},
      date={1987},
   },
   book={
      series={Proc. Sympos. Pure Math.},
      volume={48},
      publisher={Amer. Math. Soc., Providence, RI},
   },
   isbn={0-8218-1482-6},
   date={1988},
   pages={133--166},
   review={\MR{0974333}},
   doi={10.1090/pspum/048/974333},
}

\bib{MR0929658}{article}{
   author={Rosay, Jean-Pierre},
   author={Rudin, Walter},
   title={Holomorphic maps from ${\bf C}^n$ to ${\bf C}^n$},
   journal={Trans. Amer. Math. Soc.},
   volume={310},
   date={1988},
   number={1},
   pages={47--86},
   issn={0002-9947},
   review={\MR{0929658}},
   doi={10.2307/2001110},
}

\bib{MR1829353}{article}{
   author={Varolin, Dror},
   title={The density property for complex manifolds and geometric
   structures},
   journal={J. Geom. Anal.},
   volume={11},
   date={2001},
   number={1},
   pages={135--160},
   issn={1050-6926},
   review={\MR{1829353}},
   doi={10.1007/BF02921959},
}

\bib{MR1626461}{article}{
   author={Wilson, George},
   title={Collisions of Calogero-Moser particles and an adelic Grassmannian},
   note={With an appendix by I. G. Macdonald},
   journal={Invent. Math.},
   volume={133},
   date={1998},
   number={1},
   pages={1--41},
   issn={0020-9910},
   review={\MR{1626461}},
   doi={10.1007/s002220050237},
}

\bib{MR3813594}{article}{
   author={Zentner, Raphael},
   title={Integer homology 3-spheres admit irreducible representations in
   ${\rm SL}(2,\mathbb{C})$},
   journal={Duke Math. J.},
   volume={167},
   date={2018},
   number={9},
   pages={1643--1712},
   issn={0012-7094},
   review={\MR{3813594}},
   doi={10.1215/00127094-2018-0004},
}

\end{biblist}
\end{bibdiv}

\end{document}